\DeclareSymbolFont{cyrletters}{OT2}{wncyr}{m}{n}
\DeclareMathSymbol{\Sha}{\mathalpha}{cyrletters}{"58}
\newtheorem{theorem}{Theorem}[section]
\newtheorem{lemma}[theorem]{Lemma}
\newtheorem{prop}[theorem]{Proposition}
\newtheorem{cor}[theorem]{Corollary}
\newtheorem{conjecture}[theorem]{Conjecture}
\theoremstyle{definition}
\newtheorem{definition}[theorem]{Definition}
\newtheorem{remark}[theorem]{Remark}
\newcommand{\Z}{\mathbb{Z}}
\newcommand{\Q}{\mathbb{Q}}
\newcommand{\F}{\mathbb{F}}
\newcommand{\bK}{\mathbb{K}}
\newcommand{\Kb}{\bar{K}}
\newcommand{\braces}[1]{\left\{#1\right\}}
\newcommand{\Sk}{\mathfrak{S}}
\newcommand{\mk}{\mathfrak{m}}
\newcommand{\Ac}{\mathcal{A}}
\newcommand{\Fc}{\mathcal{F}}
\newcommand{\Ic}{\mathcal{I}}
\newcommand{\Oc}{\mathcal{O}}
\newcommand{\Rc}{\mathcal{R}}
\newcommand{\Sc}{\mathcal{S}}
\newcommand{\Xc}{\mathcal{X}}
\newcommand{\chisub}[1]{{\chi_{\lower2.5pt\hbox{$\scriptstyle #1$}}}}
\newcommand{\beq}{\begin{equation}}
\newcommand{\eeq}{\end{equation}}
\newcommand{\beqa}{\begin{eqnarray}}
\newcommand{\eeqa}{\end{eqnarray}}
\newcommand{\beqaN}{\begin{eqnarray*}}
	\newcommand{\eeqaN}{\end{eqnarray*}}
\renewcommand{\phi}{\varphi}
\renewcommand{\mod}[1]{\hspace{.025in}\left(\text{mod } #1\right)}
\renewcommand{\dim}[1]{\text{dim}_{#1}}
\renewcommand{\ker}[1]{\text{ker}\left(#1\right)}
\newcommand{\rk}[1]{\text{rank}_{#1}}
\newcommand{\crk}[1]{\text{corank}_{#1}}
\newcommand{\isomto}{\stackrel{\sim}{\longrightarrow}}
\newcommand{\into}{\hookrightarrow}
\newcommand{\ip}[2]{\langle #1,#2 \rangle}
\newcommand{\Norm}[2]{N_{{#1}/{#2}}}
\newcommand{\Trace}[2]{\text{Tr}_{{#1}/{#2}}}
\newcommand{\muroots}{\ensuremath{\boldsymbol{\mu}}}
\newcommand{\pp}{\mathfrak{p}}
\newcommand{\qp}{\mathfrak{q}}
\newcommand{\Pp}{\mathfrak{P}}
\newcommand{\weilpairpol}[2]{\text{e}_{#1,\lambda}\left(#2\right)}
\newcommand{\invmap}[1]{\text{inv}_{#1}}
\newcommand{\Hom}[3]{\text{Hom}_{#1}(#2,#3)}
\newcommand{\End}[2]{\text{End}_{#1}(#2)}
\newcommand{\Gal}[2]{\text{Gal}({#1}/{#2})}
\newcommand{\Selmer}[3]{\text{Sel}_{{#1}}({#2}/{#3})}
\newcommand{\Sel}[3]{H^1_{{#1}}({#2},{#3})}
\newcommand{\TS}[2]{\Sha({#1}/{#2})}
\newcommand{\Coh}[3]{H^{#1}({#2},{#3})}
\newcommand{\Cocyc}[3]{Z^{#1}({#2},{#3})}
\newcommand{\Coch}[3]{C^{#1}({#2},{#3})}
\newcommand{\res}[1]{\text{res}_{#1}}
\numberwithin{equation}{section}
\begin{document}

\title[Local constants for abelian varieties]{Arithmetic local constants for abelian varieties with extra endomorphisms}

\author[Chetty]{Sunil Chetty}
\address{Mathematics Department\\ College of St. Benedict and St. John's University}
\email{schetty@csbsju.edu}

\date{\today}

\begin{abstract}
	This work generalizes the theory of arithmetic local constants, introduced by Mazur and Rubin, to better address abelian varieties with a larger endomorphism ring than $\Z$. We then study the growth of the $p^\infty$-Selmer rank of our abelian variety, and we address the problem of extending the results of Mazur and Rubin to dihedral towers $k\subset K\subset F$ in which $[F:K]$ is not a $p$-power extension.
\end{abstract}

\subjclass[2010]{Primary 11G05, 11G10; Secondary 11G07, 11G15}

\keywords{elliptic curve, abelian variety, Selmer rank, complex multiplication}

\maketitle

\section{Introduction}
\label{intro}
In \cite{MR}, Mazur and Rubin introduce a theory of arithmetic local constants for an elliptic curve $E$ in terms of Selmer structures associated to $E$. With this theory they study, for an odd prime $p$, the growth in $\Z_p$-corank of the $p^\infty$-Selmer group $\Selmer{p^\infty}{E}{K}$ (see \S\ref{pSelcorank}) over a dihedral extension of number fields. To be precise, an extension $F/k$ is \emph{dihedral} if $k\subset K\subset F$ is a tower of number fields with $K/k$ quadratic, $F/k$ Galois, $F/K$ $p$-power abelian, and a lift of the non-trivial element $c\in\Gal{K}{k}$ acts on each $\sigma\in\Gal{F}{K}$ as $c\sigma c^{-1}=\sigma^{-1}$. They prove (under mild assumptions, see \cite[\S 7]{MR}) that the growth in the $\Z_p$-corank of $\Selmer{p^\infty}{E}{}$ over $F/K$ must be at least $[F:K]$.\\
\indent Here, we consider a more general context for the theory of local constants. In particular, we replace the elliptic curve $E/k$ with a pair $(X/k,\lambda)$ of an abelian variety $X/k$ and a polarization $\lambda:X\to X^{\vee}$ on $X$ of degree prime to $p$, where $X^{\vee}$ is the dual abelian variety. We consider the ring of integers $\Oc$ of a number field $\bK$, and assume $\Oc\subset\End{K}{X}$ is contained in the ring of endomorphisms of $X$ defined over $K$. The case $\Oc=\Z$ and $\bK=\Q$ is that of Mazur and Rubin in \cite{MR}. Recent work of Seveso \cite{Seveso} addresses similar questions for abelian varieties with real multiplication.\\
\indent The condition that $X$ has a polarization degree prime to $p$ implies that many of the constructions of \cite{MR} generalize verbatim\footnotemark\footnotetext{see subsection ``Generalizations'' in \cite[\S 1]{MR}}, with $E$ replaced by $X$. The goal in the present work is, in particular, to generalize Theorem 6.4 of \cite{MR} in the case that the endomorphism ring of $X$ is strictly larger than $\Z$.\\
\indent As a motivating example, consider $p$ an odd rational prime, $X=E$ an elliptic curve defined over $\Q$ with complex multiplication by the ring of integers $\Oc$ of a quadratic imaginary field $\bK$ in which $p$ does not split, and set $K=\bK$. The $\Z_p$-corank of $\Selmer{p^\infty}{E}{K}$ would be even, so $E$ would not satisfy the hypotheses of Theorem 7.2 of \cite{MR} and hence one does not obtain a lower bound for the $\Z_p$-corank of $\Selmer{p^\infty}{E}{F}$. One needs to consider $\Selmer{p^\infty}{E}{F}$ as a module over $\Oc\otimes\Z_p$ in order to obtain any useful generalization of the main tool (Theorem 6.4 of \cite{MR}) in the proof of Theorem 7.2 of \cite{MR}.

\subsection{Notation and Assumptions}
\label{notation}
Before continuing, we introduce some notation and assumptions that will be used until \S\ref{compositeresults}, where we will ease the restrictions on $F/K$.\\
%
\indent Fix an odd rational prime $p$. The tower $k\subset K\subset F$ is as above, with $K/k$ quadratic, $F/K$ an abelian $p$-extension, and $F/k$ dihedral. Also, $X/k$ and $\Oc\subset\End{K}{X}$ are as above, and we denote the cohomology groups $\Coh{i}{\Gal{\bar{K}}{K}}{X(\bar{K})}$ by $\Coh{i}{K}{X}$. Define a set $\Sk_F$ of primes $v$ of $K$ by
	$$\Sk_F:=\braces{\text{$v\mid p$, or $v$ ramifies in $F/K$, or where $X/K$ has bad reduction}},$$
and define $\Sk_L$ similarly for intermediate fields $K\subset L\subset F$. For a cyclic extension $L/K$ contained in $F$, define $A_L$ to be the twist of $X$, in the sense of \cite{MRS}, associated to $L/K$ (see \S\ref{SelmerTate} below).\\
\indent We assume that our prime $p$ is unramified in $\Oc\subset\End{K}{X}$ and we denote $\bK_\pp$ and $\Oc_\pp$ for the local field and ring, respectively, at a prime $\pp$ of $\Oc$ above $p$. For each prime $v$ of $K$ we fix an extension of $v$ to $\Kb$, which in turn fixes an embedding of $\Kb$ into an algebraic closure of $K_v$ and a decomposition subgroup $G_{K_v}=\Gal{\Kb_v}{K_v}\subset G_K$.\\
\indent We fix a polarization $\lambda:X\to X^{\vee}$ on $X$ of degree prime to $p$, thus fixing an isogeny $\lambda\in\Hom{}{X}{X^{\vee}}$ which has an inverse in $\lambda^{-1}\in\Hom{}{X^{\vee}}{X}\otimes\Q$. Associated to $\lambda$ is the Rosati involution on $\End{}{X}\otimes\Q$, given by
	$$\alpha\mapsto \alpha^{\dagger}:=\lambda^{-1}\circ\alpha^{\vee}\circ\lambda,$$
where $\alpha^\vee$ is the dual of $\alpha$. This in particular satisfies,
	$$\weilpairpol{\ell}{\alpha a,a'}=\weilpairpol{\ell}{a,\alpha^{\dagger}a'},$$
where $e_{\ell,\lambda}(\cdot,\cdot)=e_{\ell}(\cdot,\lambda(\cdot))$ is the Weil pairing and $a,a'\in T_{\ell}(X)\otimes\Q$ (see \cite[\S 16-17]{MiAV}).\\
\indent We assume that the non-trivial element $c\in\Gal{K}{k}$ acts as the Rosati involution on $\Oc\subset\End{K}{X}\otimes\Q$, and that $\Oc$ is taken to itself by the Rosati involution, i.e. $\Oc^c=\Oc^{\dagger}=\Oc.$
\begin{remark}
	Suppose $X=E$ is an elliptic curve defined over $k$ with complex multiplication by $\Oc\subset\bK$ and $\Oc\subset\End{K}{E}$. We know that the Rosati involution is the automorphism of $\Oc\otimes\Q=\bK$ of order 2. If $\bK\nsubseteq k$, then $k\bK=K$ and so the action of the Rosati involution and $c\in\Gal{K}{k}$ on $\Oc$ must coincide.
\end{remark}

\subsection{Main Results} 
\label{intromainresults}
With the above discussion in mind, the goal in the following is to keep track of the extra endomorphisms of the variety $X/k$. Effectively this amounts to extending the base ring (from $\Z_p$ to $\Oc\otimes\Z_p$) for the $p^\infty$-Selmer module, and as such the main results address this base extension.\\
\indent In \S\ref{torsionmodules} we address the important properties, for our purposes, of torsion $\Oc$-modules, noting Proposition \ref{skewsymmprop} for those modules equipped with a certain biliear form. In \S\ref{SelmerTate} we extend the results of \cite{MR} regarding Selmer structures and duality, and in \S\ref{Rrank} we apply those results to obtain information about the $\Oc/p\Oc$-rank of the relevant modules (as in \S 2 of \cite{MR}). This, in particular, motivates a generalized definition (in \S\ref{primeresults}) of the arithmetic local constant $\delta_v$, and combining \S\ref{torsionmodules}-\S\ref{Rrank} in \S\ref{pSelcorank} leads to our main result, Theorem \ref{genMR6.4}.\\
%
%
\indent As an application, in \S\ref{compositeresults} we are able to address another generalization mentioned in the introduction of \cite{MR}. In particular, we will consider dihedral towers $k\subset K\subset F$ where $[F:K]$ is not a prime power. For example, suppose $[F:K]$ is divisible by two distinct odd primes $p,q$ and $L/K$ is a cyclic extension contained in $F$. Then we have a $p$-power extension $M/K$ and a $q$-power extension $M'/K$ in $L$ (one of these may be trivial) such that $M\cap M'=K$ and $L=MM'$. We can apply Theorem \ref{genMR6.4} for $X$, $A_M$, and the ($p$-power) dihedral extension $M/k$ and then separately for $A_M$, $A_L$, and a ($q$-power) dihedral extension $M'/k$. Assuming Conjecture \ref{rankindepofl}, we can combine this information to compare $X$ and $A_L$.\\
\indent In addition to applications to growth in $p$-Selmer rank, it would be interesting to compare the individual $\delta_v$ to a quotient of the local root numbers for the $L$-function associated to $X$, as in \cite{ChCLC}. We leave this question to future work.

\section{Torsion $\Oc$-modules}
\label{torsionmodules}
In this section we consider various $\Oc$-modules, and so we prove some general results before applying them to our specific situation. Our abelian variety $X$ and the associated cohomology groups $\Coh{i}{K}{X}$ are the basic examples of $\Oc$-modules to keep in mind.\\
\indent As $\Oc/p\Oc$ may not be an integral domain, one does not have a natural definition of the $\Oc/p\Oc$-rank of an $\Oc/p\Oc$-module via its fraction field (since there would be no such field). However, since $p\Oc=\prod_i \pp_i$ with $\pp_i\neq\pp_j$ when $i\neq j$, one has 
	$$\Oc/p\Oc\cong\oplus_i(\Oc/\pp_i)$$ 
induced by the natural $\Oc\to\oplus_i(\Oc/\pp_i)$ maps.\footnotemark\footnotetext{Alternatively, one has $\Oc/p\Oc=\Oc\otimes_{\Z}(\Z/p\Z)$ and that $\Oc$ is a torsion-free, hence flat, $\Z$-module (see \cite[\S XVI.3]{LangAlg}), which yields the same decomposition.} Thus, $\Oc/p\Oc$ is a direct sum of fields $\Oc/\pp_i$, and each of these is a finite extension of $\F_p$.
\begin{definition}
\label{rankvector}
	Set $R=\Oc/p\Oc$ and $R_i=\Oc/\pp_i$, so $R\cong\oplus_{i=1}^m R_i$. For any $R$-module $M$ of finite type, define the \emph{$R$-rank} of $M$ to be
 		$$\rk{R}M:=(\ldots,\dim{R_i}M\otimes_{R}R_i,\ldots)\in\Z^m.$$
	We say $a=(a_1,\ldots,a_m)\in\Z^m$ is \emph{even} if $a_i$ is even for each $i$.
\end{definition}
A first, and most important, property of this definition of $R$-rank is that it behaves as one expects with respect to short exact sequences. We will exploit this property frequently. The proof of this and the subsequent Lemma are left as exercises for the reader.
\begin{prop}
	\label{rankexact}
	If $0\to M_1\to M_2\to M_3\to 0$ is a short exact sequence of $R$-modules then
	$$\rk{R}M_2=\rk{R}M_1+\rk{R}M_3.$$
\end{prop}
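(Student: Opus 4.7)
My plan is to decompose everything componentwise along the ring decomposition $R \cong \oplus_{i=1}^m R_i$ and then invoke additivity of dimension for short exact sequences of vector spaces.

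First I would fix central idempotents $e_1,\ldots,e_m \in R$ with $e_i e_j = 0$ for $i \neq j$, $1 = \sum_i e_i$, and $e_i R = R_i$. For any $R$-module $M$ this yields a canonical internal direct sum decomposition $M = \oplus_i e_i M$, and the tautological map $e_i M \to M \otimes_R R_i$ is an isomorphism of $R_i$-modules (either by direct inspection, or because $R_i = e_i R$ is a direct summand of $R$ as an $R$-module).

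Next, I would apply this decomposition to the short exact sequence. Because the maps in $0 \to M_1 \to M_2 \to M_3 \to 0$ are $R$-linear and each $e_i$ is central, multiplication by $e_i$ commutes with those maps, so the sequence restricts to a short exact sequence $0 \to e_i M_1 \to e_i M_2 \to e_i M_3 \to 0$ of $R_i$-vector spaces for every $i$. Equivalently, the functor $-\otimes_R R_i$ is exact since $R_i$ is a direct summand (hence a flat $R$-module).

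Finally, since each $R_i$ is a field, additivity of dimension in short exact sequences of vector spaces gives $\dim{R_i}(M_2 \otimes_R R_i) = \dim{R_i}(M_1 \otimes_R R_i) + \dim{R_i}(M_3 \otimes_R R_i)$ for every $i$. Reading this off coordinate-by-coordinate in Definition \ref{rankvector} yields $\rk{R}M_2 = \rk{R}M_1 + \rk{R}M_3$. There is essentially no obstacle here; the only point to verify is the exactness of $-\otimes_R R_i$, which is automatic from the direct summand structure of $R_i \subset R$.
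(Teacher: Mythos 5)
Your argument is correct: the paper leaves this proposition as an exercise, and the route you take (splitting $R\cong\oplus_i R_i$ via central idempotents, noting that $-\otimes_R R_i$ is exact because $R_i$ is a direct summand of $R$, and then applying additivity of dimension for short exact sequences of $R_i$-vector spaces) is exactly the intended standard argument. No gaps.
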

%
\begin{lemma}
\label{tensor-torsion}
	If $M$ is an $\Oc/p\Oc$-module of finite type (i.e. $M$ is $p$-torsion as an $\Oc$-module) then $M\otimes_R(\Oc/\pp)\cong M[\pp]$.
\end{lemma}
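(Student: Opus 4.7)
The plan is to exploit the idempotent decomposition of $R = \Oc/p\Oc$ coming from the factorization $p\Oc = \prod_i \pp_i$, and to identify both sides with the same summand of $M$.

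First I would write $R \cong \prod_{i=1}^m R_i$ with $R_i = \Oc/\pp_i$, which produces orthogonal idempotents $e_1,\ldots,e_m \in R$ with $e_i R = R_i$ and $\sum e_i = 1$. Any $R$-module $M$ then decomposes canonically as $M = \bigoplus_i e_i M$, where $e_i M$ is an $R_i$-module. Under this decomposition, tensoring is easy: since $R_j = R e_j$, one has $M \otimes_R R_j \cong e_j M$ in the obvious way. So the right-hand side of the claimed isomorphism is $e_\pp M$, where $e_\pp$ is the idempotent corresponding to the chosen prime $\pp$ above $p$.

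Next I would show $M[\pp] = e_\pp M$. On the summand $e_\pp M$, the prime $\pp$ acts as zero (because $e_\pp M$ is an $R_\pp = \Oc/\pp$-module), so $e_\pp M \subseteq M[\pp]$. Conversely, for $i \neq \pp$ I need $e_i M \cap M[\pp] = 0$. Since $\pp_i$ and $\pp$ are distinct maximal ideals of $\Oc$ (both lying above $p$), they are coprime, so there exists $\alpha \in \pp$ with $\alpha \notin \pp_i$. The image of $\alpha$ in the field $R_i = \Oc/\pp_i$ is therefore a unit, so $\alpha$ acts invertibly on the $R_i$-module $e_i M$. In particular, if $m \in e_i M$ satisfies $\pp\cdot m = 0$, then $\alpha m = 0$ forces $m = 0$. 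Combining the two inclusions gives $M[\pp] = e_\pp M$, which matches $M \otimes_R R_\pp$.

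There is no real obstacle here; the whole argument rests on the splitting of $R$ into a product of fields and on the coprimality of distinct $\pp_i$'s. The only thing to verify with slight care is the final step that $\pp$ acts invertibly on $e_i M$ for $i \neq \pp$, which is exactly where the hypothesis that $p$ is unramified in $\Oc$ (so that the $\pp_i$ are genuinely distinct maximal ideals) is used.
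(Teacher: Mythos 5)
Your argument is correct, and it is the standard one: the paper explicitly leaves this lemma as an exercise, and the CRT/idempotent decomposition $M=\bigoplus_i e_iM$ with $M\otimes_R R_j\cong e_jM=M[\pp_j]$ is exactly what is intended (the footnote in \S\ref{torsionmodules} even points to the same splitting of $R$). One tiny quibble: the unramifiedness of $p$ is already used to get $R\cong\bigoplus_i R_i$ as a product of fields, not merely in the last step; the coprimality of distinct $\pp_i$ would hold regardless.
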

%
For any $R$-module $M$, we denote $M^\dagger$ for the $R$-module which has the same underlying set as $M$, but with $R$-action given by $rm:=r^\dagger m$. Also, for any abelian group $\Gamma$, we denote $\Hom{}{M}{\Gamma}:=\Hom{\Z}{M}{\Gamma}$ for the $R$-module of group homomorphisms from $M$ to $\Gamma$, with the $R$-action on $\Hom{}{M}{\Gamma}$ given by $(rf)(x)=f(rx)$.
\begin{lemma}
\label{rankM=rankMc}
 Suppose $M$ is an $R$-module and $\mathfrak{c}:M\stackrel{\sim}{\longrightarrow}M$ is an isomorphism of groups with $\mathfrak{c}(rm)=r^\dagger \mathfrak{c}(m)$. Then $M\cong M^\dagger$ as $R$-modules and in particular $\rk{R}M=\rk{R}M^\dagger.$
\end{lemma}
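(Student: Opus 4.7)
The plan is to simply verify that $\mathfrak{c}$, viewed as a map with target $M^\dagger$ instead of $M$, is already the desired $R$-module isomorphism. The hypothesis $\mathfrak{c}(rm) = r^\dagger \mathfrak{c}(m)$ is tailored to exactly this: recall that the $R$-action on $M^\dagger$ is, by definition, $r \cdot_{M^\dagger} n = r^\dagger n$ (the right side being the original $R$-action on $M$). So the relation becomes
\[
\mathfrak{c}(rm) \;=\; r^\dagger \mathfrak{c}(m) \;=\; r \cdot_{M^\dagger} \mathfrak{c}(m),
\]
which says precisely that $\mathfrak{c}\colon M \to M^\dagger$ is $R$-linear.

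Next I would observe that $\mathfrak{c}$ is a bijection of the underlying sets (it is a group isomorphism), and $M$ and $M^\dagger$ share that underlying set, so $\mathfrak{c}$ is an $R$-module isomorphism $M \stackrel{\sim}{\longrightarrow} M^\dagger$. This gives the first assertion.

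For the rank statement, an $R$-module isomorphism $M \cong M^\dagger$ implies $M \otimes_R R_i \cong M^\dagger \otimes_R R_i$ as $R_i$-vector spaces for each $i$, so $\dim_{R_i}(M \otimes_R R_i) = \dim_{R_i}(M^\dagger \otimes_R R_i)$ componentwise. By Definition \ref{rankvector}, this is the equality $\mathrm{rank}_R M = \mathrm{rank}_R M^\dagger$. (Alternatively, one could invoke Proposition \ref{rankexact} applied to the short exact sequence $0 \to 0 \to M \to M^\dagger \to 0$, but the dimension-count is more direct.)

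There is no real obstacle here; the content of the lemma is unpacking the definition of $M^\dagger$ and recognizing that the stated semilinearity of $\mathfrak{c}$ is exactly what is needed to promote it to an honest $R$-linear isomorphism with retargeted codomain. The only mildly subtle point is the bookkeeping that the underlying set of $M^\dagger$ is the same as that of $M$, so no change on elements is required—only a reinterpretation of the scalar action.
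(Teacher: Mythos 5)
Your proof is correct and follows the same route as the paper's (which simply observes that the relation $\mathfrak{c}(rm)=r^\dagger\mathfrak{c}(m)$ makes $\mathfrak{c}$ an $R$-isomorphism onto $M^\dagger$); you have merely spelled out the retargeting of the codomain and the componentwise dimension count that the paper leaves implicit.
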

\begin{proof}
	The isomoprhism $\mathfrak{c}$ induces an $R$-isomorphism, since $\mathfrak{c}(rm)=r^\dagger \mathfrak{c}(m)$.
\end{proof}
\begin{lemma}
\label{R-Hom-ranks}
	$\rk{R}R_t=\rk{R}\Hom{}{R_t^\dagger}{\F_p}^\dagger$, for each $t\in\braces{1,\ldots,m}$.
\end{lemma}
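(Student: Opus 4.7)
The plan is to construct an explicit $R$-module isomorphism $\Hom{}{R_t^\dagger}{\F_p}^\dagger \cong R_t$, from which the stated equality of rank vectors is immediate. Indeed, both sides then equal the standard basis vector $e_t \in \Z^m$, since the ring decomposition $R = \oplus_i R_i$ forces $R_t \otimes_R R_i$ to be $R_t$ when $i = t$ and $0$ otherwise.

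The Rosati involution preserves $\Oc$ and fixes $p$, so it descends to an involution on $R$ that permutes the maximal ideals $\pp_1,\dots,\pp_m$; let $t^\dagger$ denote the index with $\pp_t^\dagger = \pp_{t^\dagger}$. The first step is to show $R_t^\dagger \cong R_{t^\dagger}$ as $R$-modules, which I would verify by checking that the map $\phi\colon R \to R_t^\dagger$ given by $r \mapsto r^\dagger \bmod \pp_t$ is a surjective $R$-linear homomorphism for the twisted action: the computation $\phi(sr) = (sr)^\dagger = s^\dagger r^\dagger = s \cdot_\dagger \phi(r) \bmod \pp_t$ is essentially the whole content, and the kernel of $\phi$ is $\{r : r^\dagger \in \pp_t\} = \pp_{t^\dagger}$.

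The second step is to identify $\Hom{}{R_i}{\F_p}$ with $R_i$ as $R$-modules for each $i$. The $R$-action on this Hom module annihilates $\pp_i$, so it factors through the field $R_i$, and a comparison of $\F_p$-dimensions shows $\Hom{}{R_i}{\F_p}$ is one-dimensional over $R_i$, hence $R$-isomorphic to $R_i$. Stringing the two isomorphisms together yields
$$\Hom{}{R_t^\dagger}{\F_p}^\dagger \;\cong\; \Hom{}{R_{t^\dagger}}{\F_p}^\dagger \;\cong\; R_{t^\dagger}^\dagger \;\cong\; R_{(t^\dagger)^\dagger} \;=\; R_t,$$
where the last equality uses that $\dagger$ is an involution. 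There is no real conceptual obstacle; the only point requiring care is keeping track of the index permutation $t \mapsto t^\dagger$ each time one toggles between an $R$-module and its $\dagger$-twist, which is why the isomorphism $R_t^\dagger \cong R_{t^\dagger}$ must be stated cleanly and applied twice.
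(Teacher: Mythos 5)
Your proof is correct, but it takes a genuinely different route from the paper's. The paper argues componentwise on rank vectors: using Lemma \ref{tensor-torsion} it reduces the $j$-th component of $\rk{R}\Hom{}{R_t}{\F_p}^\dagger$ to $\dim{R_j}\Hom{}{R_t}{\F_p}^\dagger[\pp_j]$ and then checks directly that this dimension is $1$ exactly when $\pp_j^\dagger=\pp_t$ and $0$ otherwise; no module isomorphism between the two sides is ever produced. You instead construct an explicit $R$-module isomorphism $\Hom{}{R_t^\dagger}{\F_p}^\dagger\cong R_t$ by (a) tracking the permutation $t\mapsto t^\dagger$ that the Rosati involution induces on the primes above $p$, giving $R_t^\dagger\cong R_{t^\dagger}$, and (b) the dimension count $\Hom{}{R_i}{\F_p}\cong R_i$; both steps check out, including the verification that $\ker{\phi}=\pp_{t^\dagger}$ and that a $1$-dimensional $R_i$-vector space is $R$-isomorphic to $R_i$. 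Your conclusion is strictly stronger (an isomorphism, not merely equal rank vectors) and is close in spirit to the paper's own remark following the lemma, which obtains the same isomorphism in one stroke from the perfect trace pairing $(x,y)\mapsto\Trace{R_t}{\F_p}(xy^\dagger)$. Your version has the minor advantage of making the index bookkeeping explicit --- the paper's condition ``$R_t=R_j^\dagger$'' is really the statement $\pp_j^\dagger=\pp_t$ about ideals, which your notation $t^\dagger$ handles cleanly --- at the cost of toggling twice between a module and its $\dagger$-twist.
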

\begin{proof}
	By Definition \ref{rankvector}, 
		$$\begin{array}{rl}
				\rk{R}R_t= & (\ldots,\dim{R_j}R_t\otimes_R R_j,\ldots)\\
				= & (0,\ldots,\dim{R_t} R_t,\ldots,0),\vspace{2mm}\\
				\rk{R}\Hom{}{R_t}{\F_p}^\dagger= & (\ldots,\dim{R_j}\Hom{}{R_t}{\F_p}^\dagger\otimes_R R_j,\ldots).
			\end{array}$$
	Since $\Hom{}{R}{\F_p}^\dagger$ is an $\Oc/p\Oc$-module, we can use Lemma \ref{tensor-torsion} to obtain
		\begin{equation}
		\label{tens-torHoms}	
			\dim{R_j}\Hom{}{R_t}{\F_p}^\dagger\otimes_R R_j= \dim{R_j}\Hom{}{R_t}{\F_p}^\dagger[\pp_j],
		\end{equation}
	and we claim that 
		\begin{equation}
		\label{relatingHoms}	
			\dim{R_j}\Hom{}{R_t}{\F_p}^\dagger[\pp_j]=
				\left\{
					\begin{array}{lr}
						0 & \text{~when~}R_t\neq R_j^\dagger\\
						1 & \text{~when~}R_t=R_j^\dagger\\	
					\end{array}
				\right\}
		\end{equation}
Consider $f\in\Hom{}{R_t}{\F_p}^\dagger[\pp_j]$, with $R_t\neq R_j^\dagger$. If $f(r^\dagger x)=0$ for all $x\in R_t$ and all $r^\dagger\in\pp_j^\dagger$ then $f$ is the zero map, since there exists some $r^\dagger\in\pp_j^\dagger$ such that $r^\dagger\not\in\pp_t$ and hence $r^\dagger R_t=R_t$. When $R_t=R_j^\dagger$, we have $r^\dagger x=0$ for all $x\in R_t$, and so $f(r^\dagger x)=0$ is satisfied for every $f\in\Hom{}{R_j^\dagger}{\F_p}^\dagger[\pp_j]$, and this set has $R_j$-dimension 1.\\
	\indent Now consider $R_t\neq R_s$. Then viewing $R_t\otimes_R R_s$ either as $R_t[\pp_s]$ or $R_s[\pp_t]$ shows that $R_t\otimes_R R_s$ is trivial, and hence has rank 0. When $R_t=R_s$, we have $R_t\otimes_R R_t=R_t$. From this and \eqref{relatingHoms}, we obtain $\dim{R}R_t=\dim{R}\Hom{}{R_t^\dagger}{\F_p}^\dagger$.
\end{proof}
\begin{remark}
	Alternatively, one can prove Lemma \ref{R-Hom-ranks} as follows. Define a perfect pairing $(~,~):R_t\times R_t^\dagger\to\F_p$ via $(x,y)\mapsto \Trace{R_t}{\F_p}(xy^\dagger)$. This pairing satisfies $(rx,y)=(x,r^\dagger y)$ and hence gives an $R_t$-module isomoprhism $R_t\cong \Hom{}{R_t^\dagger}{\F_p}^\dagger.$
\end{remark}
\begin{cor}
\label{M-Hom-ranks}
	If $M$ is an $R$-module of finite type, then $$\rk{R}M=\rk{R}\Hom{}{M^\dagger}{\F_p}^\dagger.$$
\end{cor}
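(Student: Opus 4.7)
The plan is to reduce the statement to Lemma \ref{R-Hom-ranks} by a standard additivity argument, exploiting that $R=\Oc/p\Oc$ is semisimple. Indeed, the decomposition $R\cong\oplus_{i=1}^m R_i$ as a product of fields shows that $R$ is a (commutative) semisimple ring, so any $R$-module $M$ of finite type splits as a finite direct sum of simples, $M\cong\oplus_i R_i^{n_i}$. The assignment $M\mapsto M^\dagger$ is an exact functor (it only changes the $R$-action, not the underlying group), and the assignment $M\mapsto\Hom{}{M}{\F_p}$ is exact on $R$-modules of finite type since every such $M$ is $p$-torsion, whence $\Hom_{\Z}(M,\F_p)=\Hom_{\F_p}(M,\F_p)$ is just $\F_p$-linear duality on a finite-dimensional $\F_p$-vector space. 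Finally, $\rk{R}$ is additive on short exact sequences by Proposition \ref{rankexact}.

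With these preparations, both sides of the claimed equality are additive in $M$. Writing $M\cong\oplus_i R_i^{n_i}$, the left-hand side becomes $\sum_i n_i\,\rk{R}R_i$, and the right-hand side becomes
\[
	\rk{R}\Hom{}{\oplus_i (R_i^{n_i})^\dagger}{\F_p}^\dagger
	=\sum_i n_i\,\rk{R}\Hom{}{R_i^\dagger}{\F_p}^\dagger.
\]
Term-by-term, Lemma \ref{R-Hom-ranks} gives $\rk{R}R_i=\rk{R}\Hom{}{R_i^\dagger}{\F_p}^\dagger$, and summing with the multiplicities $n_i$ yields the corollary. Alternatively, one can argue by induction on the length of a composition series of $M$ and apply additivity at each step, which avoids even invoking the direct sum decomposition explicitly.

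There is no serious obstacle; the only point that needs to be checked carefully is the exactness of $\Hom{}{-}{\F_p}$ on $p$-torsion $R$-modules (so that it interacts correctly with the additivity of $\rk{R}$), but this is immediate from its identification with $\F_p$-linear duality. Everything else is formal once Lemma \ref{R-Hom-ranks} is available.
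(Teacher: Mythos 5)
Your proposal is correct and follows essentially the same route as the paper, which likewise deduces the corollary from Lemma \ref{R-Hom-ranks} together with the decomposition $M\cong\oplus_t R_t^{n_t}$ and additivity of $\rk{R}$; your added remarks on exactness of $\Hom{}{-}{\F_p}$ and $(-)^\dagger$ just make explicit the standard compatibilities the paper leaves implicit.
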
	
\begin{proof}
	This follows from the Lemma and $M\cong\oplus_{t} R_t^{n_t}$.
\end{proof}
The next proposition is analogous to a well-known theorem for alternating pairings on vector spaces. Specifically, if $k$ is a field with char$(k)\neq 2$ and there is a non-degenerate, skew-symmetric pairing on a finite dimensional $k$-vector space $V$, then $\dim{k} V$ is even (see \cite[\S XV.8]{LangAlg} or \cite[\S 9.5]{Rotman}).
\begin{prop}
\label{skewsymmprop}
	Suppose $A$ is a commutative ring, char$(A)\neq 2$, and $A\cong\oplus_{j=1}^n A_j$, where each $A_j$ is a local ring with principal maximal ideal $\mk_j$. Let $M$, $N$ be $A$-modules with $M$ finite and $[~,~]:M\times M\to N$ be a non-degenerate, skew-symmetric pairing which satisfies $[sx,y]=[x,sy]$ for all $x,y\in M$ and $s\in A$. Then there exist $A$-submodules $M'$, $M''$ with $M'\cong M''$ and $M\cong M'\oplus M''$.
\end{prop}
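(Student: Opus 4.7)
The plan is to reduce to the local case via idempotents, reduce the claim to showing that each multiplicity in the cyclic decomposition of $M$ is even, and then establish that parity by an inductive hyperbolic-splitting argument.

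The decomposition $A = \bigoplus_j A_j$ yields orthogonal idempotents $e_j$ and a decomposition $M = \bigoplus_j M_j$ with $M_j := e_j M$. The hypothesis $[sx, y] = [x, sy]$ with $s = e_j$ forces $[e_j x, e_k y] = [e_j e_k x, y] = 0$ for $j \neq k$, so distinct blocks are orthogonal; the restriction of $[~,~]$ to each $M_j$ remains skew-symmetric and non-degenerate over $A_j$, since for $x \in M_j$ with $[x, M_j] = 0$ one has $[x, z] = [x, e_j z] = 0$ for every $z \in M$, forcing $x = 0$. A decomposition of each block assembles into one for $M$, so I assume henceforth that $A$ is local with principal maximal ideal $(\pi)$.

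The structure theorem for finitely generated modules over a local principal ideal ring gives $M \cong \bigoplus_i A/\pi^{n_i}$, and a decomposition $M = M' \oplus M''$ with $M' \cong M''$ exists if and only if every multiplicity $r_n := \#\{i : n_i = n\}$ is even (take $M'$ to contain half the summands of each length). I induct on the length of $M$. Let $n_{\max}$ be the largest exponent appearing, choose $x \in M$ with $\pi^{n_{\max}-1} x \neq 0$, and apply non-degeneracy to $\pi^{n_{\max}-1} x$ to produce $y$ with $\pi^{n_{\max}-1}[x, y] \neq 0$; a short argument shows $y$ has annihilator $(\pi^{n_{\max}})$ and that $P := Ax + Ay \cong (A/\pi^{n_{\max}})^2$ is a direct sum. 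The heart of the proof is to show $M = P \oplus P^\perp$ with the restriction of $[~,~]$ to $P^\perp$ remaining non-degenerate; induction applied to $P^\perp$ produces $P^\perp = W' \oplus W''$ with $W' \cong W''$, and $M = (Ax \oplus W') \oplus (Ay \oplus W'')$ then gives the desired decomposition since $Ax \cong Ay \cong A/\pi^{n_{\max}}$.

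The main technical obstacle is the orthogonal splitting $M = P \oplus P^\perp$. Setting $u := [x, y]$ (of order $\pi^{n_{\max}}$), the correction $z \mapsto z - \alpha x - \beta y \in P^\perp$ requires solving $\alpha u = [z, y]$ and $\beta u = -[z, x]$ for $\alpha, \beta \in A$, so the images of $[-, x]$ and $[-, y]$ in $N$ must lie in the cyclic submodule $Au$. This does not hold for arbitrary $(x, y)$, but can be arranged by first replacing $N$ with the $A$-submodule generated by the pairing values and then choosing $x$ so that the induced functional $[-, x] \colon M \to N$ has cyclic image, with $y$ generating this image as well. With the splitting secured, non-degeneracy of the restricted pairing on $P^\perp$ follows from the standard orthogonal-complement formalism, and the induction closes.
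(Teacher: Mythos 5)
Your overall strategy is the same as the paper's: decompose $M$ into its local blocks, check that distinct blocks are orthogonal, pick $x$ of maximal order and $y$ with $[x,y]$ of maximal order, split off the hyperbolic plane $P=Ax\oplus Ay$, and induct. The reduction to the local case and the construction of $P$ are fine (your idempotent argument for block-orthogonality is if anything cleaner than the paper's unit argument). The genuine gap is exactly at the step you yourself flag as the main technical obstacle, and your proposed repair does not close it. To put $z-\alpha x-\beta y$ into $P^{\bot}$ you must solve $\alpha u=[z,y]$ and $\beta u=-[z,x]$ with $u=[x,y]$, so you need the images of \emph{both} functionals $[-,x]$ and $[-,y]$ to lie in the cyclic submodule $Au\subset N$. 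Your fix addresses only $[-,x]$, and even there it is an assertion, not an argument: you never prove that a maximal-order $x$ whose functional has cyclic image generated by $u$ exists. Under the hypotheses as literally stated it need not exist, and indeed the statement itself fails at that level of generality: take $A=\F_p$ with $p$ odd, $M=N=\F_p^{3}$, and $[x,y]:=x\times y$ the formal cross product. This pairing is $A$-bilinear, skew-symmetric, and non-degenerate; every nonzero $x$ has $[-,x]$ with two-dimensional, non-cyclic image; and $M$ has odd dimension, so no decomposition $M'\oplus M''$ with $M'\cong M''$ exists. So ``can be arranged'' cannot be salvaged without using more about $N$.

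The missing input is that in every application $N$ is $\F_p$, $\Q_p/\Z_p$, or $D_p$, for which non-degeneracy of a pairing on a finite module is equivalent to perfection (the induced map $M\to\Hom{}{M}{N}$ is bijective) and the relevant submodules of $N$ are cyclic; with that in hand the image of $[-,x]$ is automatically cyclic of the same order as $x$ and is generated by $u$, likewise for $[-,y]$, and your splitting goes through with no special choice of $x$. (For what it is worth, the paper's own proof leans on the same unstated input: it deduces $M_j=U\oplus U^{\bot}$ from $U\cap U^{\bot}=0$ ``by finiteness,'' which is again only valid for a perfect pairing, as the cross-product example shows.) In short: right strategy, correct identification of the crux, but the crux is asserted rather than proved, and as asserted it is false without restricting $N$.
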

\begin{proof}
	\indent Let $M_j=M\otimes A_j$. We first note that $A\cong\oplus_j A_j$ implies $M\cong\oplus M_j$. Since $M$ is finite, we see that $x\in M_j$ implies $x\in M_j[\mk_j^t]$ for some $t$. For $i\neq j$, $x\in M_j$ and $y\in M_i$, we have that $[x,y]=0$. Indeed, there is some $\alpha\in\mk_j$ with $\alpha x=0$ which acts as a unit on $M_i$. Thus, there is some $y'\in M_i$ with $\alpha y'=y$ and so 
		$$0=[\alpha x,y']=[x,\alpha y']=[x,y].$$
	\indent Now, suppose $x\in M_j$ is of maximal order, i.e. that $x\in M_j[\mk_j^t]$ but $x\not\in M_j[\mk_j^{t-1}]$ and that $t$ is maximal. Let $\pi$ be a generator of $\mk_j$ in $A_j$. Since $\pi^{t-1}x\neq 0$ there is some $y\in M_j$ such that $[\pi^{t-1}x,y]\neq 0$. We then have $0\neq [\pi^{t-1}x,y]=[x,\pi^{t-1}y]$ and so $\pi^{t-1}y\neq 0$. In particular, this implies that $y\not\in M_j[\mk_j^{t-1}]$ and $y\in M_j[\mk_j^t]$, since $x$ was chosen to be of maximal order. Moreover, we have that $\text{span}_{A_j}\braces{x}\cong\text{span}_{A_j}\braces{y}$. We also note that if $w=ax$ for some $a\in A$ then 
		$$[x,w]=[x,ax]=[ax,x]=[w,x]$$
and so $[x,w]=0$.\\
\indent Set $U:=\text{span}_{A_j}\braces{x,y}$. We claim that $U\cap U^\bot=\braces{0}$. Let $z\in U\cap U^\bot$ with $z=ax+by$ for some $a,b\in A_j$, and suppose that $a\neq 0$. Since $A_j$ is a local ring, we have $\pi^t\nmid a$, so $a\mid \pi^{t-1}$. So, we can find $a'\in A_j$ such that $aa'=\pi^{t-1}$. Now, for $w=a'y$ we have
		$$\begin{array}{rll}
				[z,w]= & [ax+by,a'y] & =[ax,a'y]\\
				      = & [(aa')x,y] & =[\pi^{t-1}x,y]\neq 0,
			\end{array}$$
contradicting $z\in U^\bot.$ In the same way we can see that if $\pi^t\nmid b$ then we can find $w\in U$ such that $[z,w]\neq 0$.\\
\indent We are now left with the case that $\pi^t\mid a$ and $\pi^t\mid b$. Since $x$ was chosen to be of maximal order, this forces $z=0$ and it follows that $U\cap U^\bot=\braces{0}$. Also, the above argument shows that $U\cong A_j x\oplus A_j y$. The finiteness of $M$ (and hence $M_j$) then implies that we can decompose $M_j$ as $M_j=U\oplus U^\bot$ and by induction we obtain the claim.
\end{proof}
\begin{remark}
\label{skewsymmpropremark}	
	Recall that $R=\Oc/p\Oc$, $R_j=\Oc/\pp_j$, and set $S=\Oc\otimes\Z_p$ and $S_j=\Oc_{\pp_j}$. We have decompositions $R\cong\oplus_j R_j$ and $S\cong\oplus_j S_j$. For $\Rc_L:=R_L\otimes\Z_p$, where $R_L$ is as in \S 3 of \cite{MR} (see also \S 3 below), we again have a decomposition $\Oc\otimes\Rc_L\cong\oplus_j(\Oc_{\pp_j}\otimes R_L)$. In what follows, these rings will play the role of $A$ in the above proposition. 
\end{remark}
\section{Selmer Structures and Tate Duality}
\label{SelmerTate}
As our goal is to establish a theorem analogous to Theorem 6.4 of \cite{MR}, we need to generalize the results of \cite{MR} regarding the pairing of Tate's local duality in order to yield information about the Selmer structures of Definition \ref{genSelmer-map} as $\Oc$-modules.\\
\indent Using Definition 3.3 of \cite{MR} (see also Definition 1.1 of \cite{MRS}), we have the $\Ic$-twist $A$ of $X$ exactly as in the elliptic curve case $X=E$. Specifically, for a cyclic extension $L/K$ contained in $F$, let $\rho_L$ denote the unique faithful irreducible rational representation of $\Gal{L}{K}$. Define the $\Ic_L$-twist of $X$ to be $A_L:=\Ic_L\otimes X$, where 
	$$\Ic_L:=\Q[\Gal{F}{K}]_L\cap\Z[\Gal{F}{K}]$$
and $\Q[\Gal{F}{K}]_L$ is the sum of all (left) ideals of $\Q[\Gal{F}{K}]$ isomorphic to $\rho_L$. We define the ring $R_L$ (mentioned in Remark \ref{skewsymmpropremark}) as the maximal order of $\Q[\Gal{F}{K}]_L$, and when $[L:K]=p^m$ we have that $R_L\cong\Z[\mu_{p^m}]$ has a unique prime above $p$.
\begin{remark}
\label{O-twist-vs-Z-twist}
	By definition (in \cite{MRS}), when $\Ic_L$ is a $\Z$-module, the twist $A_L=\Ic_L\otimes X$ is a $\Z$-module. However, we may regard it as an $\Oc$-module, simply by letting $\Oc$ act on $\Ic_L\otimes X$ via its action on $X$. The resulting module coincides with the $\Oc$-module $\Ic'_L\otimes X$ obtained by twisting $X$ with the $\Oc$-module
	$$\Ic'_L:=\bK[\Gal{F}{K}]_L\cap\Oc[\Gal{F}{K}].$$
\end{remark}
\begin{prop}
\label{gen4.1}
	For $\hat{p}$ the unique prime above $p$ in $\Ic_L$, there is a canonical $\Gal{\Kb}{K}$-isomorphism $A_L[\hat{p}]\cong X[p]$.
\end{prop}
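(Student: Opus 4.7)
The plan is to produce the isomorphism from a short exact sequence realizing multiplication by a uniformizer of $\hat{p}$, and then a Tor computation after tensoring with $X(\Kb)$.

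First I would exploit the fact that $R_L\cong\Z[\mu_{p^m}]$ is a DVR locally at $\hat{p}$. Since $p$ is totally ramified there, $\hat{p}$ is principal, generated by $\pi:=1-\zeta_{p^m}$, with $R_L/\hat{p}\cong\F_p$. The module $\Ic_L$ is a nonzero $R_L$-submodule of the field $\Q[\Gal{F}{K}]_L\cong\Q(\zeta_{p^m})$, hence of $R_L$-rank one, so its localization at $\hat{p}$ is free of rank one and $\Ic_L/\pi\Ic_L\cong\F_p$. This yields a short exact sequence of $R_L$-modules
\[0\to\Ic_L\xrightarrow{\pi}\Ic_L\to\F_p\to 0,\]
which I would view as a sequence of $G_K$-modules by letting $G_K$ act on $\Ic_L$ through $\Gal{F}{K}$ via left multiplication inside $\Z[\Gal{F}{K}]$. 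The crucial claim is that the induced action on $\F_p\cong\Ic_L/\pi\Ic_L$ is trivial: under $R_L\cong\Z[\mu_{p^m}]$, any element of $\Gal{F}{K}$ corresponds to a power of $\zeta_{p^m}$, and $\zeta_{p^m}\equiv 1\pmod{\pi}$.

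Next I would tensor the displayed sequence with $X(\Kb)$ over $\Z$, using the identification $A_L(\Kb)=\Ic_L\otimes_{\Z}X(\Kb)$ with diagonal $G_K$-action from \cite{MRS}. Because $\Ic_L$ is a torsion-free abelian group, it is $\Z$-flat, so $\mathrm{Tor}_i^{\Z}(\Ic_L,X(\Kb))=0$ for $i\geq 1$, and the Tor long exact sequence collapses to
\[0\to\mathrm{Tor}_1^{\Z}(\F_p,X(\Kb))\to A_L(\Kb)\xrightarrow{\pi}A_L(\Kb)\to\F_p\otimes_{\Z}X(\Kb)\to 0.\]
The standard computation $\mathrm{Tor}_1^{\Z}(\F_p,X(\Kb))=X[p]$ (from the resolution $0\to\Z\xrightarrow{p}\Z\to\F_p\to 0$) and the identification $\ker(\pi\colon A_L\to A_L)=A_L[\hat{p}]$ (since $\hat{p}=(\pi)$) then give an isomorphism of abelian groups $A_L[\hat{p}]\cong X[p]$. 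Galois equivariance follows from the naturality of the Tor long exact sequence together with the triviality of the $G_K$-action on $\F_p$ established above.

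I expect the main obstacle to be that final triviality check, i.e.\ unwinding the identifications carefully to confirm that left multiplication by an element of $\Gal{F}{K}$ on $\Ic_L$ really does coincide with multiplication by the corresponding power of $\zeta_{p^m}$ in $R_L$ modulo $\pi$. Once this compatibility is nailed down, the rest is a purely formal homological algebra computation.
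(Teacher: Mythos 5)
Your argument is correct and is essentially the argument of Proposition 4.1 of \cite{MR}, which is all the paper itself invokes here: locally at $\hat{p}$ the lattice $\Ic_L$ is free of rank one over the discrete valuation ring $R_{L,\hat{p}}$, $\hat{p}=(\pi)$, the induced $G_K$-action on $\Ic_L/\pi\Ic_L$ is trivial because $\zeta\equiv 1\pmod{\pi}$, and the Tor computation identifies the kernel of $\pi$ on $\Ic_L\otimes_{\Z}X(\Kb)$ with $X[p]$. The only caveat is that identifying the one-dimensional $\F_p$-space $\Ic_L/\pi\Ic_L$ with $\F_p$ requires a choice of generator, so the resulting isomorphism $A_L[\hat{p}]\cong X[p]$ is canonical only up to an $\F_p^{\times}$-scalar (cf.\ Remark 4.2 of \cite{MR}); this is harmless for the Selmer-structure applications since those only involve subgroups stable under scaling.
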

\begin{proof}
	This is exactly as in Proposition 4.1 of \cite{MR} (also Remark 4.2 in \cite{MR}), where our $\hat{p}$ is their $\pp=\pp_L$.
\end{proof}
We are concerned with the following Selmer structures, analogous to those of \S 2 and \S 4 of \cite{MR}.
\begin{definition} 
\label{genSelmer-map}
Define a Selmer structure $\Xc$ on $X[p]$ as the collection of $\Oc$-modules $\Sel{\Xc}{K_v}{X[p]}$, defined to be, for each $v$, the image of 
	$$X(K_v)/pX(K_v)\into\Coh{1}{K_v}{X[p]}.$$
Fix a generator $\pi$ of $\hat{p}$, with $\hat{p}$ as in Proposition \ref{gen4.1}. Define a Selmer structure $\Ac$ on $X[p]$ by setting, for each $v$, $\Sel{\Ac}{K_v}{X[p]}$ to be the image of 
	$$A_L(K_v)/\pi A_L(K_v)\into\Coh{1}{K_v}{A_L[\hat{p}]}\cong\Coh{1}{K_v}{X[p]}.$$
We note that the image in $\Coh{1}{K_v}{X[p]}$ is independent of the choice of our generator. As in \cite[\S 1]{MR}, define
	$$\begin{array}{rl}
			\vspace{.08in}\Sel{\Xc+\Ac}{K_v}{X[p]}:=\Sel{\Xc}{K_v}{X[p]}+\Sel{\Ac}{K_v}{X[p]}\\			
			\Sel{\Xc\cap\Ac}{K_v}{X[p]}:=\Sel{\Xc}{K_v}{X[p]}\cap\Sel{\Ac}{K_v}{X[p]}.
		\end{array}$$
\end{definition}
\begin{definition}
\label{self-dual}
We say that a Selmer structure $\Fc$ on $X[p]$ is \emph{self-dual} if for every prime $v$, $\Sel{\Fc}{K_v}{[X[p]}$ is its own orthogonal complement under the pairing of Tate's local duality:
	\begin{equation}
	\label{Tatepair}
		\ip{~}{~}_v:\Coh{1}{K_v}{X[p]}\times\Coh{1}{K_v}{X[p]}\to\Coh{2}{K_v}{\muroots_p}=\F_p.
	\end{equation}
\end{definition}
We note that in Definition \ref{self-dual}, we are making use of our assumption that $X$ has a polarization of degree prime to $p$ in order to have \eqref{Tatepair} as a \emph{self}-pairing.
\begin{definition}
\label{defselgrp}
Given a Selmer structure $\Fc$ on $X[p]$, define the \emph{Selmer group} to be
	$$\Sel{\Fc}{K}{X[p]}:=\ker{\Coh{1}{K}{X[p]}\to\prod_v\Coh{1}{K_v}{X[p]}/\Sel{\Fc}{K_v}{X[p]}}.$$
Thus, $\Sel{\Fc}{K}{X[p]}$ is the set of classes whose localizations are in $\Sel{\Fc}{K_v}{X[p]}$, or in other words the classes satisfying the local conditions defined by $\Fc$.
\end{definition}
\begin{prop}
\label{MR2.1selfdual}
	The Selmer structures $\Xc$ and $\Ac$ on $X[p]$ are self-dual.
\end{prop}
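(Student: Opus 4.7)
The plan is to reduce both self-duality assertions to the classical self-duality of the Kummer image under Tate's local pairing, which holds for any abelian variety equipped with a polarization of degree prime to the relevant prime, and then to transport the statement for $A_L$ along the isomorphism of Proposition \ref{gen4.1}.

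For $\Xc$, I would first recall that because $\lambda\colon X\to X^{\vee}$ has degree prime to $p$, it induces a $G_{K_v}$-isomorphism $X[p]\isomto X^{\vee}[p]$, so the Weil pairing $X[p]\times X^\vee[p]\to\muroots_p$ becomes a perfect, skew-symmetric self-pairing on $X[p]$. Under this identification, the local Tate pairing $\ip{~}{~}_v$ on $\Coh{1}{K_v}{X[p]}$ agrees with the cup-product pairing coming from $X[p]\times X^\vee[p]\to\muroots_p$, and the standard theorem on local Tate duality for abelian varieties (see Milne, \emph{Arithmetic Duality Theorems}, or the argument of \cite[Prop.~2.1]{MR}) states that the image of the Kummer map $X(K_v)/pX(K_v)\hookrightarrow\Coh{1}{K_v}{X[p]}$ is its own annihilator. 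This is exactly the self-duality of $\Xc$.

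For $\Ac$, the idea is to apply the same Kummer-image self-duality statement, but to the twisted abelian variety $A_L$ and the prime $\hat p$ of $R_L$. First I would verify that $A_L$ inherits a polarization $\lambda_L\colon A_L\to A_L^{\vee}$ from $\lambda$ via the $\Ic_L$-twist construction of \cite{MRS}; since $\Ic_L$ is a torsion-free $\Z$-module and $\lambda$ has degree prime to $p$, the induced $\lambda_L$ has degree prime to $\hat p$. Consequently the same theorem gives that the image of $A_L(K_v)/\hat p A_L(K_v)\hookrightarrow\Coh{1}{K_v}{A_L[\hat p]}$ is self-orthogonal under the analogous local Tate pairing
\[
	\ip{~}{~}_v^{A_L}\colon\Coh{1}{K_v}{A_L[\hat p]}\times\Coh{1}{K_v}{A_L[\hat p]}\to\Coh{2}{K_v}{\muroots_p}=\F_p.
\]
It then remains to push this statement across the canonical $G_{K_v}$-isomorphism $A_L[\hat p]\cong X[p]$ of Proposition \ref{gen4.1}.

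The main point that requires care is the last step: one must check that the identification $A_L[\hat p]\cong X[p]$ is compatible, up to a nonzero scalar in $\F_p$, with the two Weil pairings and hence with the two local Tate pairings. This reduces to the computation that the twisting isomorphism $\Ic_L\otimes X[p]\isomto X[p]$ intertwines $e_{p,\lambda_L}$ with $e_{p,\lambda}$ up to a unit, which follows because on one side the pairing is built from $\lambda_L=\iota\otimes\lambda$ (with $\iota$ the canonical identification of $\Ic_L/\hat p\Ic_L$ with $\F_p$ of Proposition \ref{gen4.1}) and on the other from $\lambda$. Once this compatibility is established, orthogonal complements correspond under the isomorphism, and the self-duality of $\Sel{\Ac}{K_v}{X[p]}$ follows from that of the Kummer image inside $\Coh{1}{K_v}{A_L[\hat p]}$.
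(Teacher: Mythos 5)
Your treatment of $\Xc$ matches the paper's: the polarization of degree prime to $p$ turns the Weil pairing into a perfect self-pairing on $X[p]$, and the classical statement of local Tate duality for abelian varieties gives that the Kummer image is its own annihilator.

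For $\Ac$, however, there is a genuine gap. You assert that the twist $A_L$ inherits from $\lambda$ a polarization of degree ``prime to $\hat p$'' and then invoke ``the same theorem.'' Neither half of this works as stated. First, the natural polarization on $A_L=\Ic_L\otimes X$ does \emph{not} have degree prime to $p$: the paper itself notes (citing Howe, in the proof of Theorem \ref{compositedihedral}) that the twist carries a polarization of degree $p^2$, the obstruction coming from the different of $R_L\cong\Z[\mu_{p^n}]$ over $\Z$. This is precisely why Mazur--Rubin must renormalize, introducing the factor $\pi^{-2p^{n-1}}$ in the skew-Hermitian pairing \eqref{pairA.5} on $T_p(A_L)=\Ic_L\otimes T_p(X)$ to recover a perfect pairing. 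Second, even granting a perfect self-pairing on $A_L[\hat p]$, the classical duality theorem asserts self-annihilation of the image of $A_L(K_v)/m A_L(K_v)$ for an \emph{integer} $m$; the analogous statement for $\pi$-descent, i.e.\ for the image of $A_L(K_v)/\pi A_L(K_v)$ in $\Coh{1}{K_v}{A_L[\hat p]}$, is not a formal consequence of it and requires comparing the $\pi$-adic and $p$-adic filtrations. Both points are exactly the content of Proposition A.7 of Appendix A of \cite{MR}, which is what the paper cites for $\Ac$ (observing only that $A_L$ may be regarded as a $\Z$-module so that the appendix applies verbatim). Your final compatibility check --- that $A_L[\hat p]\cong X[p]$ intertwines the two pairings up to a unit --- is a necessary ingredient and is correctly flagged, but it does not substitute for the two missing steps above.
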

\begin{proof}
	The Tate pairing is the same as that in \cite{MR}, and Tate local duality holds for a general abelian variety (see \cite[\S 1.4]{MR}). This shows that $\Xc$ is self-dual. For $\Ac$, the proof is exactly Proposition A.7 of Appendix A of \cite{MR}, noting that we need only regard $A_L$ as a $\Z$-module here.
\end{proof}	
The pairing \eqref{Tatepair} is not $\Oc$-linear, but understanding the interplay of the pairing and the map induced by $c$ on the local cohomology groups $\Coh{1}{K_v}{X[p]}$ provides information (see Lemma \ref{gen1.3ii} below) about the $R$-rank of certain Selmer groups.\\
\indent Now, we fix a lift of the nontrivial element $c\in\Gal{K}{k}$ to $\Gal{\Kb}{k}$, which we also denote $c$. As $c\in G_k$ with $c(K)=K$, we have that $c:K_v\isomto K_{v^c}$. The maps $c:G_K\to G_K:~s\mapsto c^{-1}sc$ and $c:M\to M:~a\mapsto c(a)$, for any $G_k$-module $M$, are compatible in the sense of \cite[\S VII.5]{SerreLF}, and hence induce $c^*:\Coh{*}{K}{M}\to\Coh{*}{K}{M}$ on cohomology. Similarly, from $c:G_{K_v}\to G_{K_{v^c}}$ we obtain $c^*:\Coh{*}{G_{K_{v^c}}}{M}\to\Coh{*}{G_{K_v}}{M}$.
\begin{lemma}
\label{independentoflift}
	For $G_k$-module $M$, the map $c^*:\Coh{1}{G_K}{M}\to\Coh{1}{G_K}{M}$ induced by the lift $c\in G_k$ of $c$ is independent of the choice of lift.
\end{lemma}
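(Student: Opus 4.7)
The plan is to reduce the claim to the standard fact that for a normal subgroup $H \triangleleft G$, the conjugation action of $H$ on $\Coh{*}{H}{M}$ is trivial, so that the quotient $G/H$ (not merely $G$) acts on $\Coh{*}{H}{M}$. In our situation $G_K$ has index two in $G_k$ and hence is normal, and any two lifts of the nontrivial element $c\in\Gal{K}{k}$ differ by multiplication by an element of $G_K$, so it is enough to show that modifying $c$ by an element of $G_K$ does not change $c^*$ on $\Coh{1}{G_K}{M}$.

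First I would fix a concrete cochain-level formula for $c^*$: on a $1$-cocycle $f\colon G_K\to M$, set $(c^*f)(s):=c\cdot f(c^{-1}sc)$. A short check using the cocycle identity, together with the fact that $s\mapsto c^{-1}sc$ preserves $G_K$ by normality, confirms that $c^*f$ is again a cocycle and that its cohomology class depends only on $[f]$. Given a second lift $c'=ch$ with $h\in G_K$, I would then expand
\[
(c'^{*}f)(s)\;=\;ch\cdot f\bigl(h^{-1}(c^{-1}sc)h\bigr),
\]
apply the cocycle relation to split $f\bigl(h^{-1}\cdot(c^{-1}sc)\cdot h\bigr)$ into three summands, and use the consequence $f(h^{-1})=-h^{-1}\cdot f(h)$ of $f(1)=0$. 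After simplification the difference should collapse to
\[
(c'^{*}f)(s)-(c^*f)(s)\;=\;(s-1)\cdot\bigl(c\cdot f(h)\bigr),
\]
which is precisely the coboundary of the element $c\cdot f(h)\in M$. Hence $(c')^*$ and $c^*$ induce the same map on $\Coh{1}{G_K}{M}$.

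I do not anticipate a genuine obstacle here: the argument is a direct cocycle manipulation whose shape is dictated by the general principle about $G/H$ acting on $\Coh{*}{H}{M}$. The only delicate points are keeping track of the two distinct ways $c$ enters --- as conjugation on $G_K$ and through the ambient $G_k$-action on $M$ --- and arranging the rewriting so that the change-of-lift produces a coboundary rather than merely a new cocycle, which is what forces the appeal to the identity $f(h\cdot h^{-1})=0$ when rewriting $f(h^{-1})$.
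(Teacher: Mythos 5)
Your argument is correct and is essentially the paper's: the paper proves this lemma by citing Proposition 3 of \S VII.5 of Serre's \emph{Local Fields}, i.e., precisely the standard fact you invoke, that conjugation by elements of the normal subgroup $G_K$ acts trivially on $\Coh{*}{G_K}{M}$, so that only the class of the lift in $G_k/G_K$ matters. Your explicit degree-$1$ cocycle computation, showing that changing the lift by $h\in G_K$ alters $c^*f$ by the coboundary of $c\cdot f(h)$, is a correct direct verification of the special case the paper cites.
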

\begin{proof}
	The claim follows from a special case of Proposition 3 of \S VII.5 of \cite{SerreLF}.
\end{proof}
\begin{lemma}
\label{galeqweil}
	Let $M$ and $N$ be two $G_k$-modules and $\phi:M\to N$ a $G_k$-equivariant map. Then for the map $\phi^*:\Coh{*}{K}{M}\to\Coh{*}{K}{N}$ induced by $\phi$,
		$$\phi^*\circ c^*=c^*\circ \phi^*:\Coh{*}{K_{v^c}}{M}\to\Coh{*}{K_v}{N}.$$
\end{lemma}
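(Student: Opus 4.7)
The plan is to verify the identity at the level of cochains using the functoriality of group cohomology for compatible pairs of group/module maps, in the sense of \cite[\S VII.5]{SerreLF}. The map $c^*$ is induced by the compatible pair consisting of the conjugation-by-$c$ homomorphism $\gamma_c: G_{K_v} \to G_{K_{v^c}}$ together with the module action $c: M \to M$; explicitly, for $f \in C^n(G_{K_{v^c}}, M)$,
\[
(c^* f)(s_1, \ldots, s_n) = c \cdot f(\gamma_c(s_1), \ldots, \gamma_c(s_n)).
\]
The map $\phi^*$ is induced by the pair $(\mathrm{id}, \phi)$, acting as postcomposition $g \mapsto \phi \circ g$ on cochains.

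Applying each composition to a cochain $f: G_{K_{v^c}}^n \to M$ yields
\[
(\phi^* \circ c^*)(f)(s_1, \ldots, s_n) = \phi\bigl(c \cdot f(\gamma_c(s_1), \ldots, \gamma_c(s_n))\bigr),
\]
\[
(c^* \circ \phi^*)(f)(s_1, \ldots, s_n) = c \cdot \phi\bigl(f(\gamma_c(s_1), \ldots, \gamma_c(s_n))\bigr).
\]
These expressions are equal on the nose once we know $\phi(c \cdot m) = c \cdot \phi(m)$ for all $m \in M$, which is immediate from $G_k$-equivariance of $\phi$ together with the fact that $c \in G_k$. The identity then descends to cohomology.

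The argument is entirely formal and presents no real obstacle. The only point worth highlighting is that $G_k$-equivariance of $\phi$, as opposed to mere $G_K$-equivariance, is precisely what is needed here, since the lift $c$ lies in $G_k$ but not in $G_K$.
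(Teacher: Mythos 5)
Your proof is correct and follows essentially the same route as the paper: both verify the identity directly on cochains (the paper uses homogeneous cochains from the standard resolution, you use inhomogeneous ones, an immaterial difference) and both reduce the claim to the single identity $\phi(c\cdot m)=c\cdot\phi(m)$, which holds because $\phi$ is $G_k$-equivariant and $c\in G_k$. Your closing remark that $G_k$-equivariance, rather than $G_K$-equivariance, is the essential hypothesis is exactly the right point to emphasize.
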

\begin{proof}
	Let $G=\Gal{\Kb_v}{K_v}$ and $G'=\Gal{\Kb_{v^c}}{K_{v^c}}$. We prove the claim on cochains. For each $i\geq 0$, let $P_i:=\Z[G^{i+1}]$, be the free module generated by elements $(g_0,\ldots,g_i)\in G^{i+1}$, with a $G$-action by 
	$$s.(g_0,\ldots,g_i)=(s.g_0,\ldots,s.g_i).$$
These form the standard resolution for $\Z$ (see \cite[\S VII.3]{SerreLF} or \cite[\S I.5]{Brown}).\\
	\indent Suppose $f\in\Hom{G'}{P'_i}{M}$. Then
		$$\begin{array}{rl}
				c^*(f)(g_0,\ldots,g_i)= & c(f(c^{-1}g_0c,\ldots,c^{-1}g_ic))\\
				\phi^*(f)(g_0,\ldots,g_i)= & \phi(f(g_0,\ldots,g_i)),\\
			\end{array}$$
and it follows that
	$$(\phi^*\circ c^*)(f)(g_0,\ldots,g_i)=(c^*\circ \phi^*)(f)(g_0,\ldots,g_i),$$
using the $G_k$-equivariance of $\phi$.
\end{proof}
Let $W=X[p]$. Denote $e^*:\Coh{*}{K}{W\otimes W}\to\Coh{*}{K}{\muroots_p}$ for the map induced by the Weil pairing $e_{p,\lambda}$ on $W$. We will also use $e^*$ for the maps induced by $e_{p,\lambda}$ on $G_{K_v}$-cohomology and $G_{K_{v^c}}$-cohomology, and context will make the notation clear. We know that $e_{p,\lambda}$ is $\Gal{\Kb}{k}$-equivariant (see \cite[\S III.8]{Silv} or \cite[\S 12]{MiAV}). By Lemma \ref{galeqweil}, we see that
	$$e^*\circ c^*=c^*\circ e^*:\Coh{*}{K_{v^c}}{W\otimes W}\to\Coh{*}{K_v}{\muroots_p}.$$
\begin{prop}
\label{Tatepairrelation}  
  Suppose $S$ is a finite set of primes $v$ of $K$ such that $v\in S$ if and only if $v^c\in S$. For any $a$, $b\in\oplus_{v\in S}\Coh{1}{K_v}{W}$, let $\ip{a}{b}:=\sum_{v\in S}\ip{a_v}{b_v}_v$. Then
  	$$\ip{a}{c^*(b)}=\ip{c^*(a)}{b}.$$
\end{prop}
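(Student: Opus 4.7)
The approach is to reduce to a per-prime identity and then sum. For each $v \in S$ I will establish
\begin{equation*}
\langle a_v,\, c^*(b_{v^c}) \rangle_v \;=\; \langle c^*(a_v),\, b_{v^c} \rangle_{v^c},
\end{equation*}
where on the right $c^*$ denotes the isomorphism $H^1(K_v, W) \to H^1(K_{v^c}, W)$ inverse to the map in the text. Such an inverse exists because $(c^*)^2$ on $H^1(K_v, W)$ is induced by conjugation by $c^2 \in G_K$, which acts trivially on cohomology (essentially Lemma \ref{independentoflift}, or the classical triviality of inner automorphisms). Granted the per-prime identity, summing over $v \in S$ and re-indexing the right-hand sum via $v \mapsto v^c$, valid since $S$ is $c$-stable, produces exactly $\langle a, c^*(b) \rangle = \langle c^*(a), b \rangle$.

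To establish the per-prime identity, I split the Tate pairing as $\langle \alpha, \beta \rangle_v = \text{inv}_v \circ e^*(\alpha \cup \beta)$. Writing $a_v = c^*(c^* a_v)$ and invoking the functoriality of the cup product with respect to the group homomorphism $c: G_{K_v} \to G_{K_{v^c}}$,
$$
a_v \cup c^*(b_{v^c}) = c^*(c^* a_v) \cup c^*(b_{v^c}) = c^*\bigl(c^* a_v \cup b_{v^c}\bigr).
$$
Applying $e^*$ and using Lemma \ref{galeqweil} to move $c^*$ past $e^*$, then applying $\text{inv}_v$, yields
$$
\langle a_v, c^*(b_{v^c}) \rangle_v = \text{inv}_v\bigl(c^*(e^*(c^* a_v \cup b_{v^c}))\bigr) = \text{inv}_{v^c}\bigl(e^*(c^* a_v \cup b_{v^c})\bigr) = \langle c^*(a_v), b_{v^c} \rangle_{v^c},
$$
the middle equality being the compatibility $\text{inv}_v \circ c^* = \text{inv}_{v^c}$ for the local Brauer invariants under $c: K_v \xrightarrow{\sim} K_{v^c}$.

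The principal external ingredient is the local invariant compatibility $\text{inv}_v \circ c^* = \text{inv}_{v^c}$, which is standard from local class field theory (e.g., Serre's \emph{Local Fields}, Chapter XIII). Everything else — functoriality of the cup product, Lemma \ref{galeqweil} for the Weil pairing, and triviality of inner automorphisms on cohomology — is already available in the paper. The main point requiring care is bookkeeping the directions of the various $c^*$ maps ($v \to v^c$ vs.\ $v^c \to v$); once that is done correctly, the argument is essentially formal functoriality.
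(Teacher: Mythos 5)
Your proof is correct and follows essentially the same route as the paper's: both decompose the pairing as $\invmap{v}\circ e^*\circ\cup$, write $a=c^*c^*(a)$ and use functoriality of the cup product under $c$, commute $c^*$ past $e^*$ via Lemma \ref{galeqweil}, and finish with the compatibility $\invmap{v}\circ c^*=\invmap{v^c}$. Your explicit per-prime bookkeeping and reindexing of the sum over the $c$-stable set $S$ is just a more careful spelling-out of what the paper leaves implicit.
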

\begin{proof}
	Recall that $\ip{~}{~}_v$ is defined via the composition (cf. \cite[\S 1.4]{Rubin})
	$$\xymatrix@R=25pt
			{\Coh{1}{K_v}{W}\otimes\Coh{1}{K_v}{W}\ar[d]^-\cup & & \\
			 \Coh{2}{K_v}{W\otimes W}\ar[r]^-{e^*} & \Coh{2}{K_v}{\muroots_p}\ar[r]^-{\invmap{v}} & \muroots_p.}$$
The cup product $\cup$ is functorial, so the commutative diagram
 $$\xymatrix@R=25pt@C=1pt
 			{\Coh{1}{K_{v^c}}{W}\ar[d]^-{c^*} & \otimes & \Coh{1}{K_{v^c}}{W}\ar[d]^-{c^*}\ar[rrrrr]^-\cup 
 					&&&&& \Coh{2}{K_{v^c}}{W\otimes W}\ar[d]^-{c^*}\\ 
 			 \Coh{1}{K_v}{W} & \otimes & \Coh{1}{K_v}{W}\ar[rrrrr]^-\cup 
 			 		&&&&& \Coh{2}{K_v}{W\otimes W}\\
 			}$$
implies $a\cup c^*(b)=c^*c^*(a)\cup c^*(b)=c^*(c^*(a)\cup b)$. Also we can see that, for all $i\geq 0$,
	$$\xymatrix@R=30pt@C=30pt
		 { \Coh{i}{K}{W}\ar[r]^{\sim}_{c^*}\ar[d]^{{\text{res}}_{v^c}} & \Coh{i}{K}{W}\ar[d]^{{\text{res}}_{v}}\\
				\Coh{i}{K_{v^c}}{W}\ar[r]^{\sim}_{c^*} & \Coh{i}{K_v}{W}.\\
		 }$$
commutes by recalling that on cochains $\res{v}(f)$ is restriction of the map $f$. Using Lemma \ref{galeqweil} and the property $\invmap{v}\circ c^*=\invmap{v^c}$ (see \cite[\S\S XI.1-XI.2]{SerreLF}, particularly Proposition 1) of the local invariant map, we see $\ip{a}{c^*(b)}=\ip{c^*(a)}{b}.$
\end{proof}
The next proposition shows how the $R$-action on our cohomology groups interacts with the pairing \eqref{Tatepair}.
\begin{prop}
\label{Tatepairadjoint}
	For any $a$, $b\in\Coh{1}{K_v}{X[p]}$ and $r\in R$, $\ip{ra}{b}_v=\ip{a}{r^\dagger b}_v$.
\end{prop}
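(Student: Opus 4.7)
The plan is to transport the fundamental adjoint relation
$$e_{p,\lambda}(rx,y)=e_{p,\lambda}(x,r^{\dagger}y),$$
which the Weil pairing satisfies (stated near the beginning of the paper when introducing the Rosati involution, since $r\in\Oc\subset\End{}{X}\otimes\Q$), through the cohomological recipe defining the Tate pairing. Recall that $\ip{~}{~}_v$ is the composition
$$\Coh{1}{K_v}{X[p]}\otimes\Coh{1}{K_v}{X[p]}\xrightarrow{\cup}\Coh{2}{K_v}{X[p]\otimes X[p]}\xrightarrow{e^*}\Coh{2}{K_v}{\muroots_p}\xrightarrow{\invmap{v}}\F_p,$$
and the $R$-action on $\Coh{1}{K_v}{X[p]}$ comes from the endomorphism $r\colon X[p]\to X[p]$, which is $G_K$-equivariant thanks to the standing assumption $\Oc\subset\End{K}{X}$.

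First, I would invoke functoriality of the cup product applied to the two $G_K$-equivariant maps $r\otimes 1, 1\otimes r^{\dagger}\colon X[p]\otimes X[p]\to X[p]\otimes X[p]$ to rewrite the two sides of the desired identity at the level of $\Coh{2}{K_v}{X[p]\otimes X[p]}$:
$$(ra)\cup b=(r\otimes 1)_*(a\cup b),\qquad a\cup(r^{\dagger}b)=(1\otimes r^{\dagger})_*(a\cup b).$$
Second, I would observe that the Weil-pairing adjoint relation is precisely the statement that the two $G_K$-equivariant maps $e_{p,\lambda}\circ(r\otimes 1)$ and $e_{p,\lambda}\circ(1\otimes r^{\dagger})$ from $X[p]\otimes X[p]$ to $\muroots_p$ coincide, so that on cohomology
$$e^*\circ(r\otimes 1)_*=e^*\circ(1\otimes r^{\dagger})_*\colon \Coh{2}{K_v}{X[p]\otimes X[p]}\to\Coh{2}{K_v}{\muroots_p}.$$
Combining the two displays gives $e^*((ra)\cup b)=e^*(a\cup(r^{\dagger}b))$, and post-composing with $\invmap{v}$ yields the claim.

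There is really no obstacle here: the statement is a formal consequence of (i) functoriality of cup product, (ii) the Rosati-adjoint property of the Weil pairing, and (iii) the $G_K$-equivariance of multiplication by $r$ on $X[p]$. The only point worth flagging is that the pairing is not $R$-linear in either slot separately, which is exactly why the Rosati involution $\dagger$ appears on the right; this is also the point that will later be combined with the $c$-action and Lemma \ref{rankM=rankMc} to pass from $M^\dagger$-statements back to $M$-statements in the applications to $R$-ranks of Selmer groups.
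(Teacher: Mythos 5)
Your proposal is correct and follows essentially the same route as the paper: both reduce the claim to the Rosati-adjoint identity $\weilpairpol{p}{rx,y}=\weilpairpol{p}{x,r^{\dagger}y}$ on $X[p]\otimes X[p]$ and then push it through the cup product and the invariant map. The only difference is presentational — the paper verifies the cup-product compatibility by an explicit computation on cochains, while you invoke functoriality of the cup product in the coefficients — and either formulation is fine.
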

\begin{proof}
	Let $W=X[p]$ as above, and let $x,y\in W$ and $r\in\Oc$. The claim is a consequence of the identity $\weilpairpol{p}{rx,y}=\weilpairpol{p}{x,r^\dagger y}$. As $e_{p,\lambda}$ is bilinear, it can be viewed as a map on $W\otimes_{\Z}W$, and the above property becomes $\weilpairpol{p}{rx\otimes y}=\weilpairpol{p}{x\otimes r^\dagger y}$. Now, for $a,b\in\Coh{1}{K_v}{W}$ we have $r$ and $r^\dagger$ acting by $(ra)(g)=r.a(g)$ and $(r^\dagger b)(g)=r^\dagger .b(g).$ Thus, keeping in mind that $\Oc\subset\End{K}{X}$, it follows that
		$$\begin{array}{rl}
				e^*((ra)\cup b)(g,h)= & \weilpairpol{p}{((ra)\cup b)(g,h)}\\
				= & \weilpairpol{p}{(a\cup (r^\dagger b))(g,h)}\\
				= & e_{p,\lambda}^*((a\cup (r^\dagger b))(g,h),
			\end{array}$$
and so
		$$\begin{array}{rl}
				\ip{ra}{b}_v= & \invmap{v}\circ e_{p,\lambda}^*((ra)\cup b)\\
				= & \invmap{v}\circ e_{p,\lambda}^*(a\cup (r^\dagger b))=\ip{a}{r^\dagger b}_v.\\
			\end{array}$$
\end{proof}
\begin{cor}
\label{Tateorth}
	The orthogonal complement of $\Coh{1}{K_v}{X[p]}[\pp]$ under \eqref{Tatepair} is $\oplus_{\qp\neq\pp^\dagger}\Coh{1}{K_v}{X[p]}[\qp]$.
\end{cor}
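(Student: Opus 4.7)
Write $H := \Coh{1}{K_v}{X[p]}$. Since $p$ annihilates $X[p]$ and hence $H$, the $\Oc$-action on $H$ factors through $R = \Oc/p\Oc$, and the isomorphism $R \cong \oplus_j R_j$ yields the primary decomposition $H = \oplus_j H[\pp_j]$. By assumption the Rosati involution preserves $\Oc$, so $\dagger$ permutes the maximal ideals $\pp_j$ above $p$, and $\dagger \circ \dagger = \mathrm{id}$. The Tate pairing $\ip{~}{~}_v$ on $H$ is non-degenerate by Tate local duality (using that $\lambda$ has degree prime to $p$, as already invoked in Proposition \ref{MR2.1selfdual}). The plan is to prove the two inclusions separately using Proposition \ref{Tatepairadjoint}.

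First, I would establish that $\oplus_{\qp \neq \pp^\dagger} H[\qp] \subseteq H[\pp]^\perp$. Fix $\qp \neq \pp^\dagger$; since both are distinct maximal ideals of $R$, they are coprime, so there exist $x \in \pp^\dagger$ and $y \in \qp$ with $x + y = 1$. For any $b \in H[\qp]$, $b = xb + yb = xb$, so for any $a \in H[\pp]$ Proposition \ref{Tatepairadjoint} gives
$$\ip{a}{b}_v = \ip{a}{xb}_v = \ip{x^\dagger a}{b}_v.$$
Because $x \in \pp^\dagger$ implies $x^\dagger \in \pp$, and $a \in H[\pp]$, we get $x^\dagger a = 0$ and hence $\ip{a}{b}_v = 0$.

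For the reverse inclusion, I would exploit the symmetry of the above argument under $\dagger \leftrightarrow \dagger$: swapping the roles of $\pp$ and $\pp^\dagger$ (legal since $\pp^{\dagger\dagger} = \pp$) shows that $H[\pp^\dagger]$ is \emph{automatically} perpendicular to every $H[\qp]$ with $\qp \neq \pp$. Now let $b \in H[\pp]^\perp$ and decompose $b = \sum_j b_j$ with $b_j \in H[\pp_j]$. The first inclusion already places every $b_j$ with $\pp_j \neq \pp^\dagger$ inside the claimed module, so it suffices to show that the component $b_{\pp^\dagger}$ vanishes. By the symmetric observation just made, $b_{\pp^\dagger}$ is orthogonal to $H[\qp]$ for every $\qp \neq \pp$; combined with $b_{\pp^\dagger} \perp H[\pp]$ (inherited from $b \in H[\pp]^\perp$ together with the orthogonality of the other $b_j$'s to $H[\pp]$), this forces $b_{\pp^\dagger}$ to be orthogonal to all of $H$. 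Non-degeneracy of the Tate pairing then gives $b_{\pp^\dagger} = 0$, completing the proof.

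There is no real obstacle here beyond bookkeeping: once the $R$-module decomposition $H = \oplus_j H[\pp_j]$ is in place, the corollary is a formal consequence of Proposition \ref{Tatepairadjoint}, the coprimality of distinct maximal ideals in $R$, the fact that $\dagger$ is an involution preserving $\Oc$, and non-degeneracy of the Tate pairing. The only point worth highlighting is that one cannot simply assert orthogonality component by component from $\ip{ra}{b}_v = \ip{a}{r^\dagger b}_v$ alone; one must use coprimality to produce an element of $\pp^\dagger$ acting as the identity on $H[\qp]$.
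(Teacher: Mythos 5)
Your proof is correct and follows essentially the same route as the paper: both arguments rest on Proposition \ref{Tatepairadjoint}, the decomposition $\Coh{1}{K_v}{X[p]}=\oplus_{\qp}\Coh{1}{K_v}{X[p]}[\qp]$, and non-degeneracy of the Tate pairing. The only cosmetic difference is that the paper obtains the forward inclusion by observing $0=\ip{ra}{b}_v=\ip{a}{r^\dagger b}_v$ for $r\in\pp$ and identifying $\pp^\dagger M$ with $\oplus_{\qp\neq\pp^\dagger}M[\qp]$, and then closes the reverse inclusion with a terse appeal to non-degeneracy, whereas you use coprimality of distinct maximal ideals for the forward inclusion and spell out the reverse inclusion component by component.
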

\begin{proof}
	Set $M=\Coh{1}{K_v}{X[p]}$. Let $a\in M[\pp]$, $b\in M$, and $r\in\pp$. Then
	$$0=\ip{0}{b}_v=\ip{ra}{b}_v=\ip{a}{r^\dagger b}_v,$$
so $r^\dagger M\subset M[\pp]^\bot$ and in turn $\pp^\dagger M\subset M[\pp]^\bot$. Since $M=\oplus_{\qp\mid p} M[\qp]$, we see that $\pp^\dagger M=\oplus_{\qp\neq\pp^\dagger}M[\qp]\subset M[\pp]^\bot,$ and non-degeneracy finishes the claim.
\end{proof}

\section{$\Oc/p\Oc$-rank}
\label{Rrank}
Recall $\Sk_L$ is a finite set of primes of $K$ containing those which divide $p$ or are ramified in $L/K$ or where $X$ does not have good reduction. In this section we fix a cyclic extension $L/K$ contained in $F$.
\begin{lemma}
\label{X=A}
	For $v\not\in\Sk_L$, the Selmer structures $\Xc$ and $\Ac$ on $X[p]$ coincide.
\end{lemma}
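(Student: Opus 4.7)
The plan is to identify both $\Sel{\Xc}{K_v}{X[p]}$ and $\Sel{\Ac}{K_v}{X[p]}$ with the \emph{unramified} subgroup of $\Coh{1}{K_v}{X[p]}$, namely the subgroup of classes killed by restriction to the inertia subgroup $I_v \subset G_{K_v}$. Since $v \not\in \Sk_L$ guarantees both $v \nmid p$ and that $X$ has good reduction at $v$, the local Kummer sequence combined with the N\'{e}ron--Ogg--Shafarevich criterion (this is exactly Proposition 2.1 of \cite{MR}, whose proof goes through verbatim for an arbitrary abelian variety with good reduction at a prime above a different residue characteristic) identifies the image of $X(K_v)/pX(K_v)$ in $\Coh{1}{K_v}{X[p]}$ with the unramified subgroup. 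This handles $\Xc$.

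For $\Ac$, the first step is to verify that $A_L$ itself has good reduction at $v$. Because $v \not\in \Sk_L$, the extension $L/K$ is unramified at $v$, so $I_v$ acts trivially on $\Ic_L$; combined with good reduction of $X$, applying the N\'{e}ron--Ogg--Shafarevich criterion to $T_\ell(A_L) \cong \Ic_L \otimes T_\ell(X)$ for any $\ell$ different from the residue characteristic of $v$ yields good reduction of $A_L$ at $v$. The same Kummer-theoretic argument as above (with $X$ replaced by $A_L$ and $p$ by $\pi$) then identifies the image of $A_L(K_v)/\pi A_L(K_v)$ in $\Coh{1}{K_v}{A_L[\hat{p}]}$ with the unramified subgroup of the latter group.

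Finally, the canonical isomorphism $A_L[\hat{p}] \cong X[p]$ of Proposition \ref{gen4.1} is $\Gal{\Kb}{K}$-equivariant, hence in particular $G_{K_v}$- and $I_v$-equivariant; it therefore carries the unramified subgroup of $\Coh{1}{K_v}{A_L[\hat{p}]}$ isomorphically onto the unramified subgroup of $\Coh{1}{K_v}{X[p]}$ under the induced isomorphism on cohomology. Stringing the three identifications together gives $\Sel{\Ac}{K_v}{X[p]} = \Sel{\Xc}{K_v}{X[p]}$. The only genuinely substantive point is the good-reduction check for the twist $A_L$, and that is purely formal once one uses that $L/K$ is unramified at $v$; everything else reduces to invoking the standard unramified-image description of the Kummer map at good primes away from $p$.
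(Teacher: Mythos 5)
Your proof is correct and takes essentially the same route as the paper, whose proof simply cites Corollary 4.6 of \cite{MR} (resting on Lemma 19.3 of \cite{Cassels}): since $v\notin\Sk_L$ forces $T_p(X)$ and $T_p(A_L)$ to be unramified at $v$, both local conditions are identified with the unramified subgroup $\Coh{1}{K_v^{ur}/K_v}{X[p]}$, exactly as you argue. Your only slip is bibliographic --- the unramified-image description of the Kummer map at good primes away from $p$ is Lemma 4.5/Corollary 4.6 of \cite{MR} (or Cassels, Lemma 19.3), not Proposition 2.1 --- but the mathematics is the same.
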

\begin{proof}
This is Corollary 4.6 of \cite{MR}, which uses Lemma 19.3 of \cite{Cassels}. Specifically, both $\Xc$ and $\Ac$ are self-dual (cf \S\ref{SelmerTate}) and when $v\not\in\Sk_L$ then both $T_{p}(X)$ and $T_{p}(A_L)$ are unramified at $v$. Thus,
		$$\Sel{\Xc}{K_v}{X[p]}=\Sel{\Ac}{K_v}{X[p]}=\Coh{1}{K_v^{ur}/K_v}{X[p]}.$$
\end{proof}
Let $R=\Oc/p\Oc$ and $R_i=\Oc/\pp_i$ be as in the previous section. We now generalize the main results of \S 1 of \cite{MR} regarding self-dual Selmer structures. Later, determining the difference in the $(\Oc\otimes\Z_p)$-corank of the $p^\infty$-Selmer groups associated to $\Xc$ and $\Ac$ will be reduced to determining the difference in the $R$-corank of the $p$-Selmer groups, and Theorem \ref{gen1.4} below describes the latter. We phrase the result specifically in terms of the Selmer structures $\Xc$ and $\Ac$, as we make use of the assumption on $c$ introduced in the beginning of \S\ref{intro} to prove Lemma \ref{gen1.3i}.
\begin{remark}
\label{rankB=rankBc}
	The following is an example of an application of Lemma \ref{rankM=rankMc}.	Set $W=X[p]$ and 
		$$B=\bigoplus_{v\in\Sk_L}(\Sel{\Xc+\Ac}{K_v}{W}/\Sel{\Xc\cap\Ac}{K_v}{W}).$$
We check that $v\in\Sk_L$ if and only if $v^c\in\Sk_L$. Since $c\in\Gal{K}{k}$, we have $v\mid p$ implies $v^c\mid p$. Also, if $w$ witnesses that $v$ is ramified in $L/K$ then $w^c$ witnesses that $v^c$ is ramified in $L/K$. Lastly, since $X$ is defined over $k$, $X$ has good reduction at $v$ if and only if $X$ has good reduction at $v^c$.\\
	\indent The automorphism $c$ induces an isomorphism $X(K_v)\isomto X(K_{v^c})$ and in turn $\Coh{1}{K_v}{W}\isomto\Coh{1}{K_{v^c}}{W}$. This restricts to a group isomorphism
		$$\Sel{\Xc}{K_v}{W}\isomto\Sel{\Xc}{K_{v^c}}{W}.$$
We have analogous isomorphisms for $\Sel{\Ac}{K_v}{W}$. As $B$ is a direct sum taken over all $v\in\Sk_L$, we know that $\Sel{\Xc+\Ac}{K_v}{W}$ and $\Sel{\Xc+\Ac}{K_{v^c}}{W}$ occur symmetrically in $B$. Thus, 
	$$\begin{array}{rl}
			B= & \bigoplus_{v\in\Sk_L}(\Sel{\Xc+\Ac}{K_v}{W}/\Sel{\Xc\cap\Ac}{K_v}{W})\\
			 \cong & \bigoplus_{v^c\in\Sk_L}(\Sel{\Xc+\Ac}{K_{v^c}}{W}/\Sel{\Xc\cap\Ac}{K_{v^c}}{W})=B
		\end{array}$$
and so $c:B\isomto B$. Lemma \ref{rankM=rankMc} then gives $\rk{R}B=\rk{R}B^\dagger$.
\end{remark}
Recall the definition of a Selmer group, e.g. $\Sel{\Xc}{K}{X[p]}$, in Definition \ref{defselgrp}. The following Lemmas generalize Proposition 1.3 of \cite{MR}.
\begin{lemma}
	\label{gen1.3i}
	Since $\Xc$ and $\Ac$ are self-dual,\\
	\indent $\rk{R}\Sel{\Xc+\Ac}{K}{X[p]}/\Sel{\Xc\cap\Ac}{K}{X[p]}$
	\begin{flushright}
		$=\sum_{v\in S}\rk{R}(\Sel{\Xc}{K_v}{X[p]}/\Sel{\Xc\cap\Ac}{K_v}{X[p]}).$
	\end{flushright}
\end{lemma}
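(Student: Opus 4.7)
The plan is to follow Mazur--Rubin's proof of \cite[Prop.~1.3(i)]{MR}, which treats the $\F_p$-dimension analogue, and refine it to an $R$-rank identity by working one prime $\pp$ of $\Oc$ above $p$ at a time, using the $c$-action to bridge $\pp$ and its image $\pp^\dagger$ under the Rosati involution.

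Set $W=X[p]$ and, for each $v\in\Sk_L$, let $A_v:=\Sel{\Xc}{K_v}{W}/\Sel{\Xc\cap\Ac}{K_v}{W}$, $B_v:=\Sel{\Ac}{K_v}{W}/\Sel{\Xc\cap\Ac}{K_v}{W}$, $V_v:=\Sel{\Xc+\Ac}{K_v}{W}/\Sel{\Xc\cap\Ac}{K_v}{W}$, and $B:=\bigoplus_v V_v$. By Proposition \ref{MR2.1selfdual}, $V_v=A_v\oplus B_v$ with both summands maximal isotropic under the induced Tate pairing. Applying Wiles' formula to both $\Xc$ and $\Xc\cap\Ac$, exactly as in \cite[Prop.~1.3(i)]{MR}, shows that the image $I$ of $\Sel{\Xc+\Ac}{K}{W}$ in $B$ is a maximal $\F_p$-isotropic subspace and fits into a short exact sequence of $R$-modules
$$0 \to \Sel{\Xc\cap\Ac}{K}{W} \to \Sel{\Xc+\Ac}{K}{W} \to I \to 0.$$
By Proposition \ref{rankexact}, proving the lemma reduces to showing $\rk{R}I=\sum_v\rk{R}A_v$, which by Definition \ref{rankvector} and Lemma \ref{tensor-torsion} amounts to checking $\dim{R_\pp}I[\pp]=\sum_v\dim{R_\pp}A_v[\pp]$ for each prime $\pp$ of $\Oc$ above $p$.

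Fix such a $\pp$. Corollary \ref{Tateorth} makes the Tate pairing perfect on $B[\pp]\times B[\pp^\dagger]$, so isotropy of $I$ gives $\dim{\F_p}I[\pp]+\dim{\F_p}I[\pp^\dagger]\le\dim{\F_p}B[\pp]$; similarly, since $A_v$ and $B_v$ are isotropic and complementary in $V_v$, the pairing $A_v[\pp]\times B_v[\pp^\dagger]\to\F_p$ is perfect, giving $\dim{R_\pp}A_v[\pp]=\dim{R_{\pp^\dagger}}B_v[\pp^\dagger]$. Next, because $c$ preserves $\Sk_L$ and each Selmer local condition (Remark \ref{rankB=rankBc}) and acts as the Rosati involution on $\Oc$, the induced automorphism $c^*$ restricts to a $\dagger$-conjugate linear bijection on $I$, on $\bigoplus_v A_v$, and on $\bigoplus_v B_v$, each sending $\pp$-torsion onto $\pp^\dagger$-torsion; Lemma \ref{rankM=rankMc} then forces $\dim{\F_p}I[\pp]=\dim{\F_p}I[\pp^\dagger]$, $\sum_v\dim{\F_p}A_v[\pp]=\sum_v\dim{\F_p}A_v[\pp^\dagger]$, and the analogous equality for $B_v$. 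Combined with Wiles' identity $2\dim{\F_p}I=\dim{\F_p}B$, the first forces $\dim{R_\pp}I[\pp]=\tfrac{1}{2}\dim{R_\pp}B[\pp]$, and the remaining identities together yield $\sum_v\dim{R_\pp}A_v[\pp]=\sum_v\dim{R_\pp}B_v[\pp]=\tfrac{1}{2}\dim{R_\pp}B[\pp]$, matching the two sides coordinate-by-coordinate.

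The main obstacle is that the Tate pairing is only $\dagger$-conjugate $R$-linear (Proposition \ref{Tatepairadjoint}), so one cannot simply base change to $R_\pp$ and quote \cite{MR} verbatim: the pairing does not descend cleanly to $\pp$-torsion. The hypothesis that $c$ induces the Rosati involution on $\Oc$ is precisely what links $\pp$- and $\pp^\dagger$-torsion components via both the pairing (Corollary \ref{Tateorth}, Proposition \ref{Tatepairrelation}) and $c^*$, so that this asymmetry between the $R$-action and the pairing cancels in every coordinate of the $R$-rank.
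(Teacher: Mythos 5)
Your argument is correct and follows essentially the same route as the paper: global duality (Wiles/Poitou--Tate) makes the image of $\Sel{\Xc+\Ac}{K}{X[p]}$ self-orthogonal in $B$, local self-duality makes the images of the $\Xc$- and $\Ac$-conditions complementary isotropics, and the $\dagger$-adjointness of the Tate pairing together with the $c^*$-action identifies the $\pp$- and $\pp^\dagger$-components of the ranks. The only difference is presentational: you unwind the prime-by-prime bookkeeping explicitly via Corollary \ref{Tateorth}, where the paper packages the same computation into Corollary \ref{M-Hom-ranks} and Lemma \ref{rankM=rankMc}.
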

\begin{proof}	
	We follow the ideas of Proposition 1.3 of \cite{MR}, noting the adjustments needed to address $R$-rank. Let $W$ and $B$ be as in Remark \ref{rankB=rankBc}. The Tate pairing restricts to $\Sel{\Xc+\Ac}{K_v}{W}$ for each $v$, and since $\Xc$ and $\Ac$ are self-dual we obtain a pairing $\ip{~}{~}:B\times B\to\F_p$.\\
	\indent Defining $C_{\Xc}$ (resp. $C_{\Ac}$) to be the projection of $\oplus_{v}\Sel{\Xc}{K_v}{W}$ (resp. $\oplus_{v}\Sel{\Ac}{K_v}{W}$) in $B$, the local self-duality of $\Xc$ (resp. $\Ac$) implies that $C_\Xc$ (resp. $C_\Ac$) is its own orthogonal complement under $\ip{~}{~}$. Using these orthogonality relations, we will show
	\begin{equation}
	\label{BCC_X}	
	\rk{R}C=\rk{R}C_{\Xc}=\rk{R}C_{\Ac}=\frac{1}{2}\rk{R}B.
	\end{equation}
	First we note $B=C_{\Xc}\oplus C_{\Ac}$, and since $C_{\Xc}^\bot=C_{\Xc}$ and $C_{\Ac}^\bot=C_{\Ac}$, the pairing $\ip{~}{~}$ restricts to a non-degenerate pairing on $C_{\Xc}\times C_{\Ac}$. From this we obtain in the usual way (see \cite[\S I.9]{LangAlg} or \cite[\S XIII.5]{LangAlg}) an $R$-isomorphism $C_{\Xc}\to\Hom{}{C_{\Ac}}{\F_p}^\dagger$ which implies
	$$\rk{R}C_{\Xc}=\rk{R}\Hom{}{C_{\Ac}}{\F_p}^\dagger=\rk{R}C_{\Ac}^\dagger,$$
	using Corollary \ref{M-Hom-ranks} for the right-hand equality. Then by Lemma \ref{rankM=rankMc}, as in Remark \ref{rankB=rankBc}, we see
	$$\rk{R}C_{\Xc}=\rk{R}C^\dagger_{\Ac}=\rk{R}C_{\Ac}.$$
	Thus, we have the middle and right-hand equalities of \eqref{BCC_X}.\\
	\indent Similarly, from $B\times B\to \F_p$ and $C=C^\bot$, we obtain $C\times(B/C)\to\F_p$ which gives
	$$\rk{R}C=\rk{R}\Hom{}{B/C}{\F_p}^\dagger=\rk{R}(B/C)^\dagger,$$
	and in turn, again by Lemma \ref{rankM=rankMc}, we have $\rk{R}C=\rk{R} B/C$. Now using the exact sequence (of $R$-modules)
	$$0\to C\to B\to B/C\to 0$$
	and Proposition \ref{rankexact}, we have
	$$\rk{R}B=\rk{R}C+\rk{R}(B/C)=2\rk{R}C,$$
	and hence the left-hand equality of \eqref{BCC_X}. The result now follows from
	$$C\cong\Sel{\Xc+\Ac}{K_v}{W}/\Sel{\Xc\cap\Ac}{K_v}{W}
	~\text{and}~
	C_{\Xc}\cong\oplus_v\Sel{\Xc}{K_v}{W}/\Sel{\Xc\cap\Ac}{K_v}{W}.$$
\end{proof}
\begin{lemma}
	\label{gen1.3ii}
	With the same assumptions and notation of Lemma \ref{gen1.3i},
	$$\rk{R}\Sel{\Xc+\Ac}{K}{W}\equiv\rk{R}(\Sel{\Xc}{K}{W}+\Sel{\Ac}{K}{W})\mod{2}.$$
\end{lemma}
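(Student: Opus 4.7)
Proof plan: The strategy is to apply Proposition \ref{rankexact} to the short exact sequence of $R$-modules
$$0\to\Sel{\Xc}{K}{W}+\Sel{\Ac}{K}{W}\to\Sel{\Xc+\Ac}{K}{W}\to Q\to 0,$$
reducing the claim to showing that $\rk{R} Q$ is even in every component, and then to apply Proposition \ref{skewsymmprop} to $Q$ equipped with a suitable non-degenerate, skew-symmetric, $R$-bilinear pairing.

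First I would translate the problem into the local picture used in Lemma \ref{gen1.3i}. The localization map $\Sel{\Xc+\Ac}{K}{W}\to B$ has kernel $\Sel{\Xc\cap\Ac}{K}{W}$ and image a subgroup $V\subset B$; the self-duality of the Selmer structure $\Xc+\Ac$, together with global Poitou--Tate duality, forces $V=V^{\perp}$ inside $B$. Using the decomposition $B=C_\Xc\oplus C_\Ac$ from Lemma \ref{gen1.3i}, the images of $\Sel{\Xc}{K}{W}$ and $\Sel{\Ac}{K}{W}$ in $B$ are $V_\Xc:=V\cap C_\Xc$ and $V_\Ac:=V\cap C_\Ac$ respectively, and a short diagram chase gives an $R$-module isomorphism $Q\cong V/(V_\Xc+V_\Ac)$.

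The heart of the proof is the construction of the pairing on $V/(V_\Xc+V_\Ac)$. For the unique decomposition $v=v_\Xc+v_\Ac$ under $B=C_\Xc\oplus C_\Ac$, I would set
$$[v,w]\;:=\;\ip{v_\Xc}{c^{*}(w_\Ac)},$$
where $\ip{~}{~}$ is the sum of the local Tate pairings over $\Sk_L$ and $c^{*}$ is the involution of $B$ induced by a lift of $c\in\Gal{K}{k}$, well-defined by Lemma \ref{independentoflift}. Well-definedness on the quotient reduces, via the $c^{*}$-stability of $V$, $C_\Xc$, and $C_\Ac$, to the vanishing of $\ip{v}{c^{*}(w)}$ for $v,c^{*}(w)\in V$, which holds because $V=V^{\perp}$. $R$-bilinearity $[rv,w]=[v,rw]$ is the cancellation of two $\dagger$-twists: Proposition \ref{Tatepairadjoint} supplies $\ip{rv_\Xc}{c^{*}(w_\Ac)}=\ip{v_\Xc}{r^{\dagger}c^{*}(w_\Ac)}$, while the identity $c^{*}(rw_\Ac)=r^{\dagger}c^{*}(w_\Ac)$, which encodes $c$ acting as the Rosati involution on $\Oc$, supplies the matching twist. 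Skew-symmetry $[w,v]=-[v,w]$ follows from Proposition \ref{Tatepairrelation} combined with the sign relation $\ip{v_\Xc}{c^{*}(w_\Ac)}=-\ip{v_\Ac}{c^{*}(w_\Xc)}$, itself forced by $V=V^{\perp}$. Non-degeneracy reduces to the statement that an element of $C_\Xc$ which is orthogonal to the $C_\Ac$-component of $V$ must lie in $V_\Xc$, which again follows from $V=V^{\perp}$ as in the proof of Lemma \ref{gen1.3i}.

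Proposition \ref{skewsymmprop} then applies with $M=V/(V_\Xc+V_\Ac)$ and $A=R=\oplus_{i}R_i$, since each $R_i=\Oc/\pp_i$ is a finite field---hence a local ring with principal (indeed zero) maximal ideal---and $p$ is odd; it produces an $R$-module decomposition $Q\cong Q'\oplus Q''$ with $Q'\cong Q''$, so that $\rk{R} Q=2\rk{R} Q'$ is even in every component. The main obstacle is the construction of the pairing: the naive local Tate pairing satisfies only the adjoint compatibility $[ra,b]=[a,r^{\dagger}b]$, which is incompatible with the $R$-bilinearity required by Proposition \ref{skewsymmprop}, and the role of the $c^{*}$-twist is precisely to convert this adjoint compatibility into genuine $R$-bilinearity by exploiting the hypothesis that $c$ acts as the Rosati involution on $\Oc$.
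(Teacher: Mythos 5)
Your proposal is correct and follows essentially the same route as the paper: reduce to showing that the quotient $\Sel{\Xc+\Ac}{K}{W}/(\Sel{\Xc}{K}{W}+\Sel{\Ac}{K}{W})$ has even $R$-rank, equip it with the Mazur--Rubin pairing $\ip{u_x}{w_a}$ twisted by $c^{*}$ so that the $\dagger$-adjointness of Proposition \ref{Tatepairadjoint} is converted into genuine $R$-bilinearity, and invoke Proposition \ref{skewsymmprop} with $A=R$. The only cosmetic difference is that the paper first establishes the untwisted pairing's skew-symmetry and kernel (citing \cite{MR}) and then twists, whereas you build the twisted pairing directly; the ingredients and their roles are identical.
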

\begin{proof}
	Again, we follow Proposition 1.3 of \cite{MR}. For $u\in\Sel{\Xc+\Ac}{K}{W}$, write $u_s\in C$ for the localization of $u$, and $u_x$, $u_a$ for the projections of $u_s$ to $C_{\Xc}$, $C_{\Ac}$, respectively. Using the symmetry of $\ip{~}{~}$, the pairing
	$$[~,~]:\Sel{\Xc+\Ac}{K}{W}\times\Sel{\Xc+\Ac}{K}{W}\to\F_p:~[u,w]:=\ip{u_x}{w_a}$$
	is skew-symmetric. Also, exactly as in \cite{MR}, the kernel of $[~,~]$ is exactly $\Sel{\Xc}{K}{W}+\Sel{\Ac}{K}{W}$, and so $[~,~]$ induces an $\F_p$-valued, non-degenerate, skew-symmetric pairing on
	$$H:=\Sel{\Xc+\Ac}{K}{W}/(\Sel{\Xc}{K}{W}+\Sel{\Ac}{K}{W}).$$
	Since $[~,~]$ is defined in terms of $\sum_{v\in\Sk_L}\ip{~}{~}_v$, we use Propositions \ref{Tatepairadjoint} and \ref{Tatepairrelation}, respectively, to see that
	$$[u,rw]=[r^\dagger u,w]~~~\text{and}~~~[u,c^*(w)]=[c^*(u),w].$$
	Define $[~,~]'$ on $H$ by $[u,w]':=[u,c^*(w)]$. The non-degeneracy and skew-symmetry of $[~,~]$ imply that $[~,~]'$ is non-degenerate and skew-symmetric also. In addition, the two properties above imply that $[ru,w]'=[u,rw]'$ and with this pairing Proposition \ref{skewsymmprop} (with $A=R$) shows that $\rk{R}H$ is even.
\end{proof}
\begin{theorem}
\label{gen1.4}
	Since $\Xc$ and $\Ac$ are self-dual,\\
		\indent $\rk{R}\Sel{\Xc}{K}{X[p]}-\rk{R}\Sel{\Ac}{K}{X[p]}$
		\begin{flushright}
			$\equiv\sum_{v\in S}\rk{R}(\Sel{\Xc}{K_v}{X[p]}/\Sel{\Xc\cap\Ac}{K_v}{X[p]})\mod{2}.$\\
		\end{flushright}
\end{theorem}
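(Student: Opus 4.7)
The plan is to deduce Theorem \ref{gen1.4} by assembling the two lemmas \ref{gen1.3i} and \ref{gen1.3ii} via additivity of $\rk{R}$ along short exact sequences (Proposition \ref{rankexact}). Let me abbreviate $W=X[p]$ and write $S_\Xc=\Sel{\Xc}{K}{W}$, $S_\Ac=\Sel{\Ac}{K}{W}$, $S_+=\Sel{\Xc+\Ac}{K}{W}$, $S_\cap=\Sel{\Xc\cap\Ac}{K}{W}$, and $\Sigma=\sum_{v\in S}\rk{R}(\Sel{\Xc}{K_v}{W}/\Sel{\Xc\cap\Ac}{K_v}{W})$.

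First, I would apply Proposition \ref{rankexact} to the two obvious short exact sequences of $R$-modules
\begin{equation*}
0\to S_\cap \to S_\Xc\oplus S_\Ac \to S_\Xc+S_\Ac\to 0,
\qquad
0\to S_\cap \to S_+ \to S_+/S_\cap\to 0,
\end{equation*}
to obtain the rank identities
\begin{equation*}
\rk{R}(S_\Xc+S_\Ac)=\rk{R}S_\Xc+\rk{R}S_\Ac-\rk{R}S_\cap,
\qquad
\rk{R}S_+=\rk{R}S_\cap+\rk{R}(S_+/S_\cap).
\end{equation*}
By Lemma \ref{gen1.3i}, the second equation rewrites as $\rk{R}S_+=\rk{R}S_\cap+\Sigma$.

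Next, I would invoke Lemma \ref{gen1.3ii}, which gives $\rk{R}S_+\equiv \rk{R}(S_\Xc+S_\Ac)\pmod{2}$. Substituting both rank formulas into this congruence yields
\begin{equation*}
\rk{R}S_\cap+\Sigma\equiv \rk{R}S_\Xc+\rk{R}S_\Ac-\rk{R}S_\cap\pmod{2},
\end{equation*}
and since $2\rk{R}S_\cap\equiv 0\pmod 2$ (coordinate-wise in $\Z^m$), the $\rk{R}S_\cap$ terms cancel, giving $\Sigma\equiv \rk{R}S_\Xc+\rk{R}S_\Ac\pmod 2$. As $-\rk{R}S_\Ac\equiv \rk{R}S_\Ac\pmod 2$, this is exactly the claim of the theorem.

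The proof is essentially a bookkeeping exercise once the two lemmas are in hand; the real content lies upstream, in establishing Lemmas \ref{gen1.3i} and \ref{gen1.3ii}, where the self-duality of $\Xc$ and $\Ac$, the compatibility of Tate's pairing with the $R$-action (Proposition \ref{Tatepairadjoint}) and with $c^*$ (Proposition \ref{Tatepairrelation}), and the skew-symmetric module structure theorem (Proposition \ref{skewsymmprop}) all combine. The only subtle point at this stage is being careful that $\rk{R}$ is a vector in $\Z^m$ and that the congruence $\equiv\pmod 2$ is taken coordinate-wise; the additivity from Proposition \ref{rankexact} and the evenness of $2\rk{R}S_\cap$ hold in each coordinate, so the argument goes through verbatim.
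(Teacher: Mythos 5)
Your proposal is correct and follows essentially the same route as the paper: the paper's proof is exactly the chain of congruences obtained from the exact sequence $0\to S_\cap\to S_\Xc\oplus S_\Ac\to S_\Xc+S_\Ac\to 0$, Lemma \ref{gen1.3ii}, and Lemma \ref{gen1.3i} (the last applied via $\rk{R}(S_+/S_\cap)=\rk{R}S_+-\rk{R}S_\cap$), together with $-x\equiv x\pmod 2$ coordinate-wise. The only cosmetic difference is that the paper also cites Lemma \ref{X=A} up front to justify that the local terms vanish outside the finite set $S=\Sk_L$, a point you implicitly absorb into Lemma \ref{gen1.3i}.
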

\begin{proof}
Applying Lemma \ref{X=A}, the claim follows from the congruences\\
		\indent $\rk{R}\Sel{\Xc}{K}{X[p]}-\rk{R}\Sel{\Ac}{K}{X[p]}$
			\begin{flushright}
				$\begin{array}{rl}
					\equiv\vspace{0.05in} & \rk{R}\Sel{\Xc}{K}{X[p]}+\rk{R}\Sel{\Ac}{K}{X[p]}\\
					\equiv\vspace{0.05in} & \rk{R}(\Sel{\Xc}{K}{X[p]}+\Sel{\Ac}{K}{X[p]})+\rk{R}\Sel{\Xc\cap\Ac}{K}{X[p]}\\
					\equiv\vspace{0.05in} & \rk{R}\Sel{\Xc+\Ac}{K}{X[p]}-\rk{R}\Sel{\Xc\cap\Ac}{K}{X[p]}\\
					\equiv & \sum_{v\in S}\rk{R}(\Sel{\Xc}{K_v}{X[p]}/\dim{}\Sel{\Xc\cap\Ac}{K_v}{X[p]})\mod{2}.
				 \end{array}$\\
			\end{flushright}
	The last two steps follow from Lemmas \ref{gen1.3i} and \ref{gen1.3ii}.
\end{proof}
\begin{remark}
\label{deflocalconstremark}
	The summands in the right-hand side of Theorem \ref{gen1.4} motivate Definition \ref{genMR4.5} below of the arithmetic local constants $\delta_v$.
\end{remark}

\section{$p$-Selmer corank}
\label{pSelcorank}
The $p$-Selmer group $\Sel{\Xc}{K}{X[p]}=\Selmer{p}{X}{K}$ sits in the exact sequence (see for example \cite[\S X.4]{Silv})
	\begin{equation}
	\label{pselmerexactsequence}		
			0\to X(K)\otimes\Z/p^m\Z\to \Selmer{p^m}{X}{K}\to \Sha(X/K)[p^m]\to 0
	\end{equation}
and passing to the limit $\Selmer{p^\infty}{X}{K}$ sits in
	\begin{equation}
	\label{limitselmerexactsequence}
			0\to X(K)\otimes\Q_p/\Z_p\to \Selmer{p^\infty}{X}{K}\to \Sha(X/K)[p^\infty]\to 0.
	\end{equation}
We have similar sequences for $\Sel{\Ac}{K}{X[p]}=\Selmer{\hat{p}}{A_L}{K}$ and for the associated direct limit $\Selmer{p^\infty}{A_L}{K}$.\\
\indent We next generalize Proposition 2.1 of \cite{MR}, but in order to do so we need to define a notion of corank over the ring $\Oc\otimes\Z_p$ (particularly in the case that it is not an integral domain). Again, we have a decomposition
	$$\Oc\otimes\Z_p\cong\oplus_i \Oc_{\pp_i}.$$
\begin{definition}
\label{rankvector2}
	Let $S:=\Oc\otimes\Z_p$ and $S_i:=\Oc_{\pp_i}$. For an $S$-module $M$, define the $S$-corank of $M$ to be
		$$\crk{S}M:=(\ldots,\crk{S_i}M\otimes\Oc_{\pp_i},\ldots).$$
\end{definition} 
\begin{prop}
\label{MR2.1congruence}
		$$\crk{S}\Selmer{p^\infty}{X}{K}\equiv\rk{R}\Selmer{p}{X}{K}-\rk{R}X(K)[p]\mod{2}.$$
\end{prop}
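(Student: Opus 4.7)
My plan is to adapt \cite[Proposition 2.1]{MR} componentwise along $S = \Oc\otimes\Z_p \cong \oplus_i S_i$, so that after reduction each $S_i$-component becomes a standard exercise in Selmer-group bookkeeping over a DVR. Since $p$ is unramified in $\Oc$, each $S_i = \Oc_{\pp_i}$ is a DVR with residue field $R_i$, and every $S$-module decomposes into its $\pp_i$-primary pieces; by Lemma \ref{tensor-torsion}, the $i$-th component of $\rk{R}\Selmer{p}{X}{K}$ equals $\dim{R_i}\Sel{\Xc}{K}{X[\pp_i]}$, with the analogous statement for $X(K)[p]$. Fixing $\pp = \pp_i$, the claim reduces to
\[
\crk{S_i}\Selmer{\pp^\infty}{X}{K} \equiv \dim{R_i}\Sel{\Xc}{K}{X[\pp]} - \dim{R_i} X(K)[\pp] \pmod 2,
\]
where $\Selmer{\pp^\infty}{X}{K}$ denotes the $\pp$-primary component of $\Selmer{p^\infty}{X}{K}$.

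Next I would run the $\pp$-descent: projecting \eqref{pselmerexactsequence} and \eqref{limitselmerexactsequence} onto their $\pp$-primary parts yields
\[
0\to X(K)/\pp X(K)\to\Sel{\Xc}{K}{X[\pp]}\to\TS{X}{K}[\pp]\to 0,
\]
\[
0\to X(K)\otimes_\Oc\bK_\pp/S_i\to\Selmer{\pp^\infty}{X}{K}\to\TS{X}{K}[\pp^\infty]\to 0.
\]
Setting $\rho := \rk{S_i}(X(K)\otimes S_i)$, $d := \crk{S_i}\TS{X}{K}[\pp^\infty]$, and $F$ the maximal finite quotient of $\TS{X}{K}[\pp^\infty]$, the structure theorem over the DVR $S_i$ gives $\dim{R_i} X(K)/\pp X(K) = \rho + \dim{R_i} X(K)[\pp]$, $\crk{S_i}\Selmer{\pp^\infty}{X}{K} = \rho + d$, and $\dim{R_i}\TS{X}{K}[\pp] = d + \dim{R_i} F[\pp]$. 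Plugging these into the $\pp$-descent sequence collapses the target congruence to $\dim{R_i} F[\pp]\equiv 0\pmod 2$.

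For the final step I would twist the Cassels-Tate pairing on $F$ (pushed through the polarization $\lambda$, which is an isomorphism on $\pp^\infty$-torsion since $\deg\lambda$ is prime to $p$) by the action of $c$. Since $c$ realizes $\dagger$ on $\Oc$ we have $c(\alpha y) = \alpha^\dagger c(y)$, and setting $\langle x, y\rangle := [x, c(y)]$ gives
\[
\langle sx, y\rangle = [sx, c(y)] = [x, s^\dagger c(y)] = [x, c(sy)] = \langle x, sy\rangle,
\]
so $\langle\cdot,\cdot\rangle$ is $S_i$-bilinear. Non-degeneracy is inherited from $[\cdot,\cdot]$, and skew-symmetry follows from Galois-equivariance $[c(x),c(y)] = [x,y]$ together with $c^2 = 1$ on $F$. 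Proposition \ref{skewsymmprop} (applied with $A = S_i$) then produces $F \cong F' \oplus F''$ with $F' \cong F''$, forcing $\dim{R_i} F[\pp]$ to be even. The main obstacle is precisely this last step: one must carefully verify that $F$ inherits a $c$-action compatible with $\dagger$, that the polarization-twisted Cassels-Tate pairing is non-degenerate and skew-symmetric on the non-divisible quotient, and that its Galois-equivariance suffices to convert the $c$-twisted pairing into a genuine skew-symmetric $S_i$-bilinear form of the kind demanded by Proposition \ref{skewsymmprop}.
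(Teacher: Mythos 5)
Your proposal is correct and follows essentially the same route as the paper: the componentwise bookkeeping over the DVRs $S_i$ is just the paper's $R$-rank-vector computation with \eqref{pselmerexactsequence}, \eqref{limitselmerexactsequence}, and Proposition \ref{rankexact} unpacked prime by prime, and both arguments reduce to showing the $\pp$-torsion of the non-divisible part of $\TS{X}{K}[p^\infty]$ has even rank. The final step you flag as the main obstacle is exactly what the paper carries out in Propositions \ref{Radjoint}, \ref{CTpairingc}, and \ref{Sha-X-even}: Flach's construction gives the $\dagger$-adjoint property and $c$-adjointness of the Cassels--Tate pairing, and the $c$-twisted pairing then feeds into Proposition \ref{skewsymmprop}.
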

\begin{proof}
	We follow the strategy of Proposition 2.1 of \cite{MR}. Let
		$$\begin{array}{rl}
				\vspace{0.05in}d:= & \rk{R}(\Selmer{p^\infty}{X}{K}/\Selmer{p^\infty}{X}{K}_{\text{div}})[p]\\
        	= & \rk{R}(\TS{X}{K}[p^\infty]/\TS{X}{K}[p^\infty]_{\text{div}})[p].
      \end{array}$$
  We have
  	$$\begin{array}{rl}
  			\vspace{0.05in}\crk{S}\Selmer{p^\infty}{X}{K}= & (\ldots,\crk{S_i}\Selmer{p^\infty}{X}{K}\otimes S_i,\ldots)\\
  			\vspace{0.05in}= & (\ldots,\rk{R_i}\Selmer{p^\infty}{X}{K}_{\text{div}}[p]\otimes R_i,\ldots)\\
  			\vspace{0.05in}= & (\ldots,\rk{R_i}\Selmer{p^\infty}{X}{K}[p]\otimes R_i,\ldots)-d\\
  			= & \rk{R}\Selmer{p^\infty}{X}{K}[p]-d,
  		\end{array}$$
  with the first and last equalities by definition, and the others as in \cite{MR}. From \eqref{limitselmerexactsequence} we obtain another sequence
  	$$0\to (X(K)\otimes\Q_p/\Z_p)[p]\to \Selmer{p^\infty}{X}{K}[p]\to \Sha(X/K)[p]\to 0$$
  and then applying Proposition \ref{rankexact} we have
  	$$\rk{R}\Selmer{p^\infty}{X}{K}[p]=\rk{R}(X(K)\otimes\Q_p/\Z_p)[p]+\rk{R}\Sha(X/K)[p].$$
  From \eqref{pselmerexactsequence} and Proposition \ref{rankexact} we obtain
  	$$\rk{R}\Selmer{p}{X}{K}=\rk{R}(X(K)/pX(K))+\rk{R}\Sha(X/K)[p].$$
  Combining these, we see that\\
  	\vspace{-.05in}
  	\indent $\crk{S}\Selmer{p^\infty}{X}{K}-\rk{R}\Selmer{p}{X}{K}$
  	\begin{center}
  		$\begin{array}{rl}
  			\vspace{0.05in}= & \rk{R}\Selmer{p^\infty}{X}{K}[p]-d-\rk{R}\Selmer{p}{X}{K}\\
  			\vspace{0.05in}= & \rk{R}(X(K)\otimes\Q_p/\Z_p)[p]-\rk{R}(X(K)/pX(K))-d\\
  			= & -\rk{R}X(K)[p]-d.\\
  		 \end{array}$
  	\end{center}
  	\vspace{-.05in}
  Here we have cancelled the $\Sha(X/K)[p]$ terms in the second equality, and the last equality follows from the exact sequence
  	$$0\to X(K)[p]\to X(K)\otimes\Z/p\Z\to (X(K)\otimes\Q_p/\Z_p)[p]\to 0,$$
defined by considering each term as an $\Oc$-module and decomposing each term as in \cite[\S 11.2]{Rotman}, and applying \cite[\S XVI.2]{LangAlg}.\\
\indent It remains to see that $d$ is even, which will show that the above equality implies the desired congruence $\mod{2}$. We prove $d$ is even below in Proposition \ref{Sha-X-even}.
\end{proof}
\indent First, we recall some definitions and results of Appendix A of \cite{MR}. For a cyclic extension $L/K$ of degree $p^n$ in $F$ we define $\Rc_L:=R_L\otimes\Z_p$, where $R_L$ is as in \S\ref{SelmerTate}, and consider $\Rc_L$ as a $G_K$-module by letting $G_K$ act trivially. Let $\zeta$ be a primitive $p^n$ root of unity and denote $\iota$ for the involution of $R_L$ induced by $\zeta\mapsto\zeta^{-1}$, and similarly for $\Rc_L$. Let $\pi:=\zeta-\zeta^{-1}$, which is a generator of the unique prime $\hat{p}$ of $R_L$ above $p$ and of the maximal ideal $\Pp$ of $\Rc_L$.\\
\indent For $W$ an $\Rc_L$-module and $B$ a $\Z_p$-module, a pairing $\ip{~}{~}:W\times W\to B$ is $\iota$-\emph{adjoint} if for each $r\in\Rc_L$ and $x,y\in W$, $\ip{rx}{y}=\ip{x}{r^\iota y}.$ Also, a pairing $\ip{~}{~}:W\times W\to \Rc_L\otimes_{\Z_p}B$ is $\Rc_L$-\emph{semilinear} if for each $r\in\Rc_L$ and $x,y\in W$
		$$\ip{rx}{y}=r\ip{x}{y}=\ip{x}{r^\iota y},$$
and is \emph{skew-Hermitian} if it is $\Rc_L$-semilinear and $\ip{y}{x}=-\ip{x}{y}^{\iota\otimes 1}$.\\
\indent Mazur and Rubin construct a map $\tau:\Rc_L\to\Z_p$ such that composition with $\tau\otimes 1:\Rc_L\otimes_{\Z_p}B\to B$ gives a bijection (Lemma A.3 and Proposition A.4 of \cite{MR}) between the set of $\Rc_L$-semilinear pairings $W\times W\to \Rc_L\otimes_{\Z_p}B$ and the set of $\iota$-adjoint pairings $W\times W\to B$. Also, if $\ip{~}{~}_{\Rc_L}$ corresponds to $\ip{~}{~}_{\Z_p}$ then $\ip{~}{~}_{\Rc_L}$ is perfect (resp. $G_K$-equivariant) if and only if $\ip{~}{~}_{\Z_p}$ is perfect (resp. $G_K$-equivariant).\\
\begin{definition}[Definition A.5 of \cite{MR}]
\label{MRA.5}
	Let $p^n=[L:K]$. Define two pairings: $f:\Ic_L\times\Ic_L\to R_L$ by
		$$f(\alpha,\beta):=\pi^{-2p^{n-1}}\alpha\beta^\iota,$$
and $\ip{~}{~}_{\Rc_L}:=f\otimes e_{p,\lambda}$ on $T_p(A_L)=\Ic_L\otimes T_p(X)$ by
		\begin{equation}
		\label{pairA.5}	
			\ip{\alpha\otimes x}{\beta\otimes y}:=(\pi^{-2p^{n-1}}\alpha\beta^\iota)\otimes\weilpairpol{p}{x,y}
					\in{\Rc_L}\otimes_{\Z_p}\Z_p(1).
		\end{equation}
\end{definition}
\indent In Theorem A.12 of \cite{MR}, Mazur and Rubin use the pairing \eqref{pairA.5} and arguments of Flach \cite{Flach} to obtain a perfect, skew-Hermitian, $\Gal{K}{k}$-equivariant pairing $[~,~]_{\Rc_L}$ on
		$$\Sha(A_L/K)_{/\text{div}}:=\Sha(A_L/K)/\Sha(A_L/K)_\text{div},$$
taking values in $D_p:=\Rc_L\otimes_{\Z_p}\Q_p/\Z_p$. Using Flach's arguments, we can also obtain the classical Cassels-Tate pairing on $\Sha(X/K)_{/\text{div}}$ from the Weil pairing on $X[p]$. We first show that these pairings satisfy $[sx,y]=[x,s^\dagger y],$ for each $s\in\Oc$. 
\begin{prop}
\label{Radjoint}
	Suppose $Y/k$ is an abelian variety with an action of $\Oc$ and $B=\Q_p/\Z_p$ or $B=D_p$. If $\ip{~}{~}:T_p(Y)\times T_p(Y)\to B$ induces (via Flach's construction) $[~,~]$ on $\Sha(Y/K)_{/\text{div}}$ and $\ip{sx}{y}=\ip{x}{s^\dagger y}$ for all $s\in\Oc$, then $[sx,y]=[x,s^\dagger y]$ for all $s\in\Oc$.
\end{prop}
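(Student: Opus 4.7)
The plan is to verify that the adjointness identity for $s\in\Oc$ descends through each step of Flach's construction, exploiting the fact that every $s\in\Oc\subset\End{K}{Y}$ induces a $K$-equivariant endomorphism of $T_p(Y)$, of the Galois cohomology groups $\Coh{*}{K_v}{Y[p^n]}$ and $\Coh{*}{K}{Y[p^n]}$, and of $\TS{Y}{K}$. These induced endomorphisms commute with cup products, with the connecting homomorphisms coming from the Kummer sequence $0\to Y[p^n]\to Y\to Y\to 0$, and with the local invariant maps $\invmap{v}$ that assemble into $[~,~]$. Granting these compatibilities, the identity $\ip{sx}{y}=\ip{x}{s^\dagger y}$ on $T_p(Y)$ propagates unchanged through every step of the construction.

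First I would recall the relevant portion of Flach's construction as used in Appendix A of \cite{MR}: one represents elements of $\TS{Y}{K}_{/\text{div}}$ by cohomology classes in an appropriate $\Coh{1}{K}{Y[p^n]}$, pairs two such classes by cup product into an $H^2$ using the given pairing on $T_p(Y)$, and then sums the local invariants $\invmap{v}$. For any $s\in\Oc$, the map $s_*$ on $\Coh{i}{K_v}{Y[p^n]}$ or $\Coh{i}{K}{Y[p^n]}$ agrees with the map coming from the $K$-morphism $s\colon Y[p^n]\to Y[p^n]$, which is the mod-$p^n$ reduction of the $\Oc$-action on $T_p(Y)$. Because $\Oc$ is commutative and acts by $K$-morphisms on $Y$, $s$ commutes with multiplication by $p^n$ on $Y$, so $s$ acts on the Kummer sequence as a morphism of short exact sequences and thus $s_*$ commutes with the connecting homomorphism that produces Flach's classes.

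Second I would chase the identity through the construction. The hypothesis $\ip{sx}{y}=\ip{x}{s^\dagger y}$ on $T_p(Y)$ says exactly that, as a bilinear map $T_p(Y)\otimes T_p(Y)\to B$, the pairing intertwines multiplication by $s$ in the first factor with multiplication by $s^\dagger$ in the second. Since cup product is functorial and bilinear and the pairing on $T_p(Y)$ enters Flach's construction linearly, this intertwining lifts to the cocycle level and descends to cohomology; summing local invariants then yields $[sx,y]=[x,s^\dagger y]$ on $\TS{Y}{K}_{/\text{div}}$. The main technical obstacle is making $\Oc$-equivariance at the connecting homomorphism fully precise, but this reduces to the observation that $s$ acts as a morphism of the Kummer sequence, which is formal. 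An analogous argument handles the case $B=D_p$ with no additional difficulty, since $\Oc$ acts trivially on the coefficient module $D_p=\Rc_L\otimes_{\Z_p}\Q_p/\Z_p$ and the extension of scalars from $B=\Q_p/\Z_p$ to $D_p$ is compatible with the $\Oc$-action on $T_p(Y)$.
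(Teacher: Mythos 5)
Your proposal is correct and follows essentially the same route as the paper: one traces the adjoint identity through Flach's construction at the level of cochains, using that each $s\in\Oc$ acts by $K$-morphisms and that the cup product induced by $\ip{~}{~}$ intertwines $s$ and $s^\dagger$. The one step worth making explicit (as the paper does) is that $d(s\beta)\cup\beta'=d\beta\cup s^\dagger\beta'$ already as cochains, so the auxiliary global cochain $\epsilon$ satisfying $d\beta\cup\beta'=d\epsilon$ and the local cochains $\gamma_v$ may be taken to be identical for the two pairs $(sx,x')$ and $(x,s^\dagger x')$, whence the sums of local invariants agree.
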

\begin{proof}
	We recall the construction of $[~,~]$ from p.116 of \cite{Flach}. Let $V_p(Y)=T_p(Y)\otimes\Q$. From $x,x'\in\Selmer{p^\infty}{Y}{K}$, we obtain cocylces $\alpha,\alpha'\in\Cocyc{1}{K}{Y[p^\infty]}$. From the exact diagram
		$$\xymatrix@R=30pt@C=30pt
			{	 & \Coch{1}{K}{V_p(Y)}\ar[r]\ar[d]^d & \Coch{1}{K}{Y[p^\infty]}\ar[d]^d\ar[r] & 0\\
				\Coch{2}{K}{T_p(Y)}\ar[r] & \Coch{2}{K}{V_p(Y)}\ar[r] & \Coch{2}{K}{Y[p^\infty]} & 
		  }
		$$
we see that $\alpha$ and $\alpha'$ can be lifted to $\beta,\beta'\in\Coch{1}{K}{V_p(Y)}$, and we have $d\beta, d\beta'\in\Coch{2}{K}{T_p(Y)}$. The pairing $\ip{~}{~}$ induces a cup-product $\cup$
		$$\Coch{i}{K}{V_p(Y)}\times\Coch{j}{K}{V_p(Y)}\stackrel{\cup}{\longrightarrow}\Coch{i+j}{K}{B}.$$	
Since $\Coh{3}{K}{B}=0$, there is some $\epsilon\in\Coch{2}{K}{B}$ such that $d\beta \cup \beta'=d\epsilon$. Since $\alpha'$ represents $x\in\Selmer{p^\infty}{Y}{K}$, $\res{v}(\alpha')$ is the image of some cocycle $\beta'_v\in\Cocyc{1}{K_v}{V_p(Y)}$. Define
		$$\gamma_v:=\res{v}(\beta)\cup\beta'_v-\res{v}(\epsilon)\in\Coch{2}{K_v}{B},$$
and then $[x,x']:=\sum_v\invmap{v}(\gamma_v).$\\
\indent Just as in Proposition \ref{Tatepairadjoint}, the cup-product $\cup$ satisfies an $\Oc$-adjoint property, so
	$$\begin{array}{rll}
			d(s\beta) \cup \beta'= & s(d\beta) \cup \beta' & \\
			= & d\beta \cup s^\dagger\beta',
		\end{array}$$
giving the same $\epsilon$ for both pairs $(sx,x')$ and $(x,s^\dagger x')$. Also,
	$$\res{v}(s\beta) \cup \beta'_v=s(\res{v}(\beta)) \cup \beta'_v=\res{v}(\beta) \cup s^\dagger\beta'_v.$$
Thus the pairs $(sx,x')$ and $(x,s^\dagger x')$ define the same $\gamma_v$, for each $v$, and so $[sx,x']=[x,s^\dagger x']$.
\end{proof}
\begin{cor}
\label{Radjointcor}
	If $[~,~]$ is obtained from $e_{p,\lambda}$ or $\ip{~}{~}_{\Rc_L}$, then $[sx,y]=[x,s^\dagger y]$ for all $s\in\Oc$.
\end{cor}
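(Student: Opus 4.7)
The plan is to reduce the corollary directly to Proposition \ref{Radjoint} by verifying, for each of the two pairings at hand, the single input hypothesis $\ip{sx}{y}=\ip{x}{s^\dagger y}$ on the underlying Tate module. Once that is in place for a given abelian variety $Y$, the proposition does all the remaining work (Flach's construction, the choice of lifts $\beta,\beta'$, the cocycle $\epsilon$, and the gluing via local invariants), so nothing about $[~,~]$ itself needs to be touched.

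For the Cassels--Tate pairing on $\Sha(X/K)_{/\text{div}}$ induced by $e_{p,\lambda}$, the hypothesis is literally the defining identity of the Rosati involution recalled in \S\ref{notation}: for all $s\in\Oc\subset\End{K}{X}$ and $x,y\in T_p(X)\otimes\Q$,
$$e_{p,\lambda}(sx,y)=e_{p,\lambda}(x,s^\dagger y).$$
Thus Proposition \ref{Radjoint} applies with $Y=X$ and $B=\Q_p/\Z_p$.

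For the pairing $[~,~]_{\Rc_L}$ on $\Sha(A_L/K)_{/\text{div}}$ induced from $\ip{~}{~}_{\Rc_L}$, I would first invoke Remark \ref{O-twist-vs-Z-twist}: the $\Oc$-structure on $A_L=\Ic_L\otimes X$ is defined so that $\Oc$ acts only on the second tensor factor, i.e.\ $s(\alpha\otimes x)=\alpha\otimes sx$. Combining this with Definition \ref{MRA.5} and the Rosati identity used above yields
$$\ip{s(\alpha\otimes x)}{\beta\otimes y}_{\Rc_L}=(\pi^{-2p^{n-1}}\alpha\beta^\iota)\otimes e_{p,\lambda}(sx,y)=(\pi^{-2p^{n-1}}\alpha\beta^\iota)\otimes e_{p,\lambda}(x,s^\dagger y)=\ip{\alpha\otimes x}{s^\dagger(\beta\otimes y)}_{\Rc_L},$$
so the hypothesis of Proposition \ref{Radjoint} is again satisfied, this time with $Y=A_L$ and $B=D_p$. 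Applying the proposition delivers the claim.

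There is really no obstacle beyond observing the compatibility between the $\Oc$-action on the twist $A_L$ and the twisting factor $\Ic_L$; both verifications collapse to the single Rosati identity on $X$. In particular, one does not need to re-examine Flach's construction or the role of the involution $\iota$ on $\Rc_L$, since the $\Oc$- and $\Rc_L$-structures act on disjoint tensor factors and commute at the level of the input pairing.
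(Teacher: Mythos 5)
Your proposal is correct and follows the same route as the paper: verify the adjoint identity for $e_{p,\lambda}$ (the Rosati property) and for $\ip{~}{~}_{\Rc_L}$ (using that $\Oc$ acts only on the $T_p(X)$ factor of $\Ic_L\otimes T_p(X)$), then invoke Proposition \ref{Radjoint}. The displayed computation for the twisted pairing matches the paper's almost verbatim.
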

\begin{proof}
	We have already seen that $\weilpairpol{p}{sx,y}=\weilpairpol{p}{x,s^\dagger y}$. By definition, the $\Oc$-action on $\Ic_L\otimes T_p(X)$ is $s(\alpha\otimes x)=\alpha\otimes(sx)$. Therefore,
		$$\begin{array}{rl}
				\ip{s(\alpha\otimes x)}{\beta\otimes y}= & \ip{\alpha\otimes(sx)}{\beta\otimes y}\\
				 = & (\pi^{-2p^{n-1}}\alpha\beta^\iota)\otimes\weilpairpol{p}{sx,y}\\
				 = & (\pi^{-2p^{n-1}}\alpha\beta^\iota)\otimes\weilpairpol{p}{x,s^\dagger y}\\
				 = & \ip{\alpha\otimes x}{\beta\otimes (s^\dagger y)}=\ip{\alpha\otimes x}{s^\dagger(\beta\otimes y)},
			\end{array}$$
and Proposition \ref{Radjoint} gives the claim.
\end{proof}
\begin{prop}
\label{CTpairingc}
	Let $[~,~]$ denote the Cassels-Tate pairing
		$$\Sha(X/K)_{/\text{div}}\times\Sha(X/K)_{/\text{div}}\to\Q_p/\Z_p.$$
Then $[c^*(x),x']=[x,c^*(x')]$.
\end{prop}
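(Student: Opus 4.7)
The plan is to adapt the strategy used in the proof of Proposition \ref{Tatepairrelation} to Flach's construction of the Cassels-Tate pairing recalled in the proof of Proposition \ref{Radjoint}. Rather than attack the asserted identity $[c^*(x), x'] = [x, c^*(x')]$ head-on, I would first prove the seemingly stronger-looking equality
	$$[c^*(x), c^*(x')] = [x, x'],$$
and then substitute $x' \mapsto c^*(x')$, using that $c^*$ is an involution on $\Sha(X/K)_{/\text{div}}$. This last fact follows because $c \in G_k$ is a lift of the nontrivial element of $\Gal{K}{k}$, so $c^2 \in G_K$ acts trivially on $\Coh{*}{K}{M}$ for any $G_k$-module $M$; equivalently, this is the content of Lemma \ref{independentoflift}.

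To prove $[c^*(x), c^*(x')] = [x, x']$, I would start with lifts $\beta, \beta' \in \Coch{1}{K}{V_p(X)}$ of cocycles $\alpha, \alpha'$ representing $x, x'$, together with an $\epsilon \in \Coch{2}{K}{\Q_p/\Z_p}$ satisfying $d\beta \cup \beta' = d\epsilon$ and local cochains $\beta'_v$ lifting $\res{v}(\alpha')$. Applying $c^*$ throughout, $c^*\beta$ and $c^*\beta'$ lift $c^*\alpha$ and $c^*\alpha'$, which represent $c^*(x)$ and $c^*(x')$. The Weil pairing $e_{p,\lambda}$ is $G_k$-equivariant, so on cochains the cup product satisfies $(c^*f) \cup (c^*g) = c^*(f \cup g)$. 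This gives
	$$d(c^*\beta) \cup c^*\beta' = c^*(d\beta \cup \beta') = c^*(d\epsilon) = d(c^*\epsilon),$$
so $c^*\epsilon$ is a valid primitive for the pair $(c^*(x), c^*(x'))$. The commutativity $\res{v}\circ c^* = c^*\circ\res{v^c}$ recorded inside the proof of Proposition \ref{Tatepairrelation} shows that $c^*\beta'_{v^c}$ serves as a local lift at $v$ for $c^*\alpha'$.

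Putting these pieces together, the local cochain $\gamma_v^{(c,c)}$ built from the $c^*$-twisted data equals $c^*\gamma_{v^c}$, and the identity $\invmap{v}\circ c^* = \invmap{v^c}$ combined with a reindexing $v \leftrightarrow v^c$ of the global sum delivers $[c^*(x), c^*(x')] = [x, x']$. Substituting $x' \mapsto c^*(x')$ and invoking $(c^*)^2 = \mathrm{id}$ then completes the argument.

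The main obstacle will be the bookkeeping at the cochain level: none of the constructions (lifts $\beta$, primitive $\epsilon$, local lifts $\beta'_v$) are canonical, so I must ensure that the $c^*$-translates of one admissible system of choices for $(x, x')$ form an admissible system for $(c^*(x), c^*(x'))$. This is where the $G_k$-equivariance of $\cup$ and $\res{v}$ does all the work, and where care is needed to confirm that both sides of the final identity are being computed with genuinely compatible auxiliary data before summing local invariants.
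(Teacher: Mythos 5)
Your argument is correct and is essentially the paper's own proof: the same three ingredients (the $G_k$-equivariance of $e_{p,\lambda}$ making the cup product commute with $c^*$, the relation $c^*\circ\res{v^c}=\res{v}\circ c^*$, and $\invmap{v}\circ c^*=\invmap{v^c}$) are combined in the same way on Flach's explicit cochain data. Routing through the invariance $[c^*(x),c^*(x')]=[x,x']$ together with the involutivity of $c^*$ on cohomology, rather than directly comparing the auxiliary data for $(c^*(x),x')$ and $(x,c^*(x'))$ as the paper does, is only a repackaging --- it amounts to the $\Gal{K}{k}$-equivariance observation the paper itself records in Remark \ref{CTpairingc-remark}.
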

\begin{proof}
	Recall that $e_{p,\lambda}$ is $G_k$-equivariant. We keep the notation in the proof of Proposition \ref{Radjoint}. Specifically, let $B=\Q_p/\Z_p$ and let $x,x'\in\Selmer{p^\infty}{X}{K}$. Just as in Proposition \ref{galeqweil} the $G_k$-equivariance of $e_{p,\lambda}$ implies, for any cochains $\omega$, $\omega'$,
		\begin{equation}
		\label{galeqcupprod}
			c^*(c^*(\omega)\cup\omega')=\omega\cup c^*(\omega').
		\end{equation}
Let the pair $c^*(\beta),\beta'$ (resp. $\beta,c^*(\beta')$) define $\epsilon\in\Coch{2}{K}{B}$ and $\gamma_v\in\Coch{2}{K_v}{B}$ (resp. $\epsilon'$, $\gamma'_v$) as in Proposition \ref{Radjoint}. Property \eqref{galeqcupprod} then implies that $c^*(\epsilon)=\epsilon'$. From $c^*\circ\res{v}=\res{v^c}\circ c^*$, we obtain
	$$\begin{array}{rl}
			\gamma_v'= & \res{v}(\beta)\cup c^*(\beta'_{v^c})-\res{v}(c^*(\epsilon))\\
			= & c^*(\res{v^c}(c^*(\beta))\cup\beta'_{v^c}-\res{v^c}(\epsilon))\\
			= & c^*(\gamma_{v^c}),
	  \end{array}$$
and so $\sum_v \invmap{v}(\gamma_v')=\sum_v \invmap{v}\circ c^*(\gamma_{v^c})=\sum_v \invmap{v^c}(\gamma_{v^c}).$ Thus, we conclude that $[x,c^*(x')]=[c^*(x),x'].$
\end{proof}
\begin{remark}
\label{CTpairingc-remark}
	The proposition also follows from Theorem A.12 of \cite{MR}. In particular, Mazur and Rubin show that the $G_k$-equivariance of $e_{p,\lambda}$ implies $\Gal{K}{k}$-equivariance of $[~,~]$, and $\Gal{K}{k}$ acts trivially on $\Q_p/\Z_p$.
\end{remark}
The following proposition shows that $d=\rk{R}\Sha(X/K)_{/\text{div}}[p]$ is even. Theorem 1 of \cite{Flach} shows that $\Sha(X/K)_{/\text{div}}$ is finite, and in particular it is a finite $p$-group. Thus, for some $t\geq 1$
	$$\Sha(X/K)_{/\text{div}}=\Sha(X/K)_{/\text{div}}[p^t]=\oplus_i \Sha(X/K)_{/\text{div}}[\pp_i^t].$$
\begin{prop}
\label{Sha-X-even}
	$d=\rk{R}(\TS{X}{K}[p^\infty]/\TS{X}{K}[p^\infty]_{\text{div}})[p]$ is even.
\end{prop}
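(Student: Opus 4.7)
The plan is to apply Proposition \ref{skewsymmprop} directly to $\Sha(X/K)_{/\text{div}}$, viewed as a module over the ring $S:=\Oc\otimes\Z_p$, rather than trying to restrict the Cassels--Tate pairing to $\Sha(X/K)_{/\text{div}}[p]$ (where it can become degenerate). Each local factor $S_i=\Oc_{\pp_i}$ is a DVR, since $p$ is unramified in $\Oc$, so $S$ satisfies the hypotheses of Proposition \ref{skewsymmprop}, and $\text{char}(S)\neq 2$ because $p$ is odd. By Flach's theorem, $\Sha(X/K)_{/\text{div}}$ is a finite $S$-module with a perfect, skew-symmetric Cassels--Tate pairing $[~,~]:\Sha(X/K)_{/\text{div}}\times\Sha(X/K)_{/\text{div}}\to\Q_p/\Z_p$, which by Corollary \ref{Radjointcor} satisfies $[sx,y]=[x,s^\dagger y]$ for $s\in\Oc$.

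The key step is to convert this $\dagger$-adjoint pairing into one that is genuinely $S$-linear in the sense required by Proposition \ref{skewsymmprop}, by twisting with $c^*$. First I would verify that $c^*$ is $\dagger$-\emph{semilinear} on cohomology, i.e.\ $c^*(sx)=s^\dagger c^*(x)$ for $s\in\Oc$. Unwinding the cochain-level formula $c^*(f)(g)=c(f(c^{-1}gc))$ as in Lemma \ref{galeqweil}, and using the hypothesis that $c$ acts as the Rosati involution on $\Oc\subset\End{K}{X}$ (so that $c\circ s=s^\dagger\circ c$ as endomorphisms of $X(\Kb)$), the semilinearity drops out. Since $c^*$ is a group automorphism of $\Sha(X/K)$ and preserves its divisible subgroup, it descends to an automorphism of $\Sha(X/K)_{/\text{div}}$.

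Now define the twisted pairing
\[
  [x,y]':=[c^*(x),y].
\]
Non-degeneracy is immediate because $c^*$ is an automorphism. Skew-symmetry follows from Proposition \ref{CTpairingc} and the skew-symmetry of $[~,~]$:
\[
  [y,x]'=[c^*(y),x]=[y,c^*(x)]=-[c^*(x),y]=-[x,y]'.
\]
The $S$-bilinearity in the form $[sx,y]'=[x,sy]'$ combines the $\dagger$-semilinearity of $c^*$ with Corollary \ref{Radjointcor}:
\[
  [sx,y]'=[c^*(sx),y]=[s^\dagger c^*(x),y]=[c^*(x),sy]=[x,sy]'.
\]

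With these three properties in hand, Proposition \ref{skewsymmprop} (applied with $A=S$ and $M=\Sha(X/K)_{/\text{div}}$) yields a decomposition $\Sha(X/K)_{/\text{div}}\cong M'\oplus M''$ of $S$-modules with $M'\cong M''$. Passing to $p$-torsion commutes with direct sums, so $\Sha(X/K)_{/\text{div}}[p]\cong M'[p]\oplus M''[p]$ with $M'[p]\cong M''[p]$ as $R$-modules, and therefore $d=\rk{R}\Sha(X/K)_{/\text{div}}[p]=2\rk{R}M'[p]$ is even. The main obstacle is the careful verification of the $\dagger$-semilinearity of $c^*$ at the cochain level and the compatibility of the twisted pairing with the $S$-action; once these are settled, Proposition \ref{skewsymmprop} does the rest.
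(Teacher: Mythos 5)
Your proposal is correct and follows essentially the same route as the paper: twist the Cassels--Tate pairing on $\Sha(X/K)_{/\text{div}}$ by $c^*$ to turn the $\dagger$-adjoint property from Corollary \ref{Radjointcor} into the relation $[sx,y]'=[x,sy]'$, and then invoke Proposition \ref{skewsymmprop} with $A=\Oc\otimes\Z_p$. Your explicit verification of the $\dagger$-semilinearity of $c^*$ (and of the passage to $p$-torsion at the end) is a detail the paper leaves implicit, but it is the same argument.
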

\begin{proof}
	From Corollary \ref{Radjointcor} and Proposition \ref{CTpairingc} the pairing $[~,~]$ on $\Sha(X/K)_{/\text{div}}$ satisfies $[sx,x']=[x,s^\dagger x']$ and $[c^*(x),x']=[x,c^*(x')]$ for all $s\in\Oc$ and $x,x'\in\Sha(X/K)_{/\text{div}}$. Define $[~,~]'$ by $[x,y]':=[x,c^*(y)]$ as in Lemma \ref{gen1.3ii}, obtaining a non-degenerate, skew-symmetric, $\Z_p$-bilinear pairing on $\Sha(X/K)_{/\text{div}}$ with $[sx,y]'=[x,sy]'$ for all $s\in\Oc$ and $x,y\in\Sha(X/K)_{/\text{div}}$. Since $\Sha(X/K)_{/\text{div}}$ is finite, Proposition \ref{skewsymmprop} (with $A=\Oc\otimes\Z_p$) then shows that $d$ is even.
\end{proof}
We now provide the analogous statement to Proposition \ref{MR2.1congruence} for $A_L$. Previously, we noted that the twist $A_L$ is defined over $K$, but in fact it is essential that $A_L$ have a model over $k$ in order to apply Theorem A.12 of \cite{MR}. Again, the results of Appendix A of \cite{MR} (Definition A.8 and on, or alternatively \cite[\S 6]{MRS}) allow us to consider $A_L$ defined over $k$. Combining Propositions \ref{MR2.1congruence} and \ref{genMR6.3} in Theorem \ref{genMR6.4} below proves a generalization of Theorem 6.4 of \cite{MR}. Recall $\Rc_L=R_L\otimes\Z_p$.
\begin{prop}
\label{genMR6.3}
	$$\crk{\Oc\otimes\Rc_L}\Selmer{p^\infty}{A_L}{K}\equiv\rk{R}\Selmer{\hat{p}}{A_L}{K}-\rk{R}X(K)[p]\mod{2}.$$
\end{prop}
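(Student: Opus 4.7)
The plan is to mirror the proof of Proposition \ref{MR2.1congruence}, replacing $X$ by $A_L$, the base ring $\Oc\otimes\Z_p$ by $\Oc\otimes\Rc_L$, and the element $p$ by the uniformizer $\pi$ of $\Rc_L$ generating $\hat{p}$. By Proposition \ref{gen4.1}, $A_L[\hat{p}]\cong X[p]$ as $G_K$-modules, so $\Selmer{\hat{p}}{A_L}{K}$ is naturally an $R$-module and $A_L(K)[\hat{p}]\cong X(K)[p]$ as $R$-modules, which is why $X(K)[p]$ (rather than $A_L(K)[\hat{p}]$) appears on the right-hand side of the statement.

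Following the steps of Proposition \ref{MR2.1congruence} almost verbatim, set
$$d_L := \rk{R}\bigl(\TS{A_L}{K}[p^\infty]/\TS{A_L}{K}[p^\infty]_{\text{div}}\bigr)[\hat{p}].$$
Decomposing $\Selmer{p^\infty}{A_L}{K}$ componentwise under $\Oc\otimes\Rc_L\cong\oplus_i(\Oc_{\pp_i}\otimes\Rc_L)$, where each factor is a finite product of DVRs (being a finite flat extension of $\Z_p$), and invoking the structure theory for co-finitely generated modules over such rings yields
$$\crk{\Oc\otimes\Rc_L}\Selmer{p^\infty}{A_L}{K} = \rk{R}\Selmer{p^\infty}{A_L}{K}[\hat{p}] - d_L.$$
Combining this with the $\hat{p}$-torsion of the standard sequence $0\to A_L(K)\otimes\Q_p/\Z_p\to\Selmer{p^\infty}{A_L}{K}\to\TS{A_L}{K}[p^\infty]\to 0$, the $\hat{p}$-Kummer sequence for $A_L$, and the snake sequence for multiplication by $\hat{p}$ on $A_L(K)\otimes\Z_p$, the $\TS[\hat{p}]$-terms cancel exactly as in the original proof, giving
$$\crk{\Oc\otimes\Rc_L}\Selmer{p^\infty}{A_L}{K} - \rk{R}\Selmer{\hat{p}}{A_L}{K} = -\rk{R}A_L(K)[\hat{p}] - d_L = -\rk{R}X(K)[p] - d_L.$$

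The main obstacle is showing that $d_L$ is even, namely the analog of Proposition \ref{Sha-X-even} for $A_L$. Here the tool is the perfect, skew-Hermitian, $\Gal{K}{k}$-equivariant pairing
$$[~,~]_{\Rc_L}:\TS{A_L}{K}_{/\text{div}}\times\TS{A_L}{K}_{/\text{div}}\to D_p = \Rc_L\otimes_{\Z_p}\Q_p/\Z_p$$
from Theorem A.12 of \cite{MR}, induced via Flach's construction from the pairing $\ip{~}{~}_{\Rc_L}$ of Definition \ref{MRA.5}. Corollary \ref{Radjointcor} provides the $\Oc$-adjointness $[sx,y]_{\Rc_L} = [x,s^\dagger y]_{\Rc_L}$ for $s\in\Oc$, and repeating the cocycle argument of Proposition \ref{CTpairingc} with the cup product valued in $D_p$ in place of $\Q_p/\Z_p$, together with the $G_k$-equivariance of $\ip{~}{~}_{\Rc_L}$, yields $[c^*(x),y]_{\Rc_L} = [x,c^*(y)]_{\Rc_L}$. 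Twisting by $c^*$ as in Lemma \ref{gen1.3ii}, and using that $c$ acts as $\dagger$ on $\Oc$ and as $\iota$ on $\Rc_L$, one obtains a non-degenerate, skew-symmetric, $\Z_p$-bilinear pairing on the finite group $\TS{A_L}{K}_{/\text{div}}$ satisfying $[sx,y]' = [x,sy]'$ for all $s\in\Oc\otimes\Rc_L$. Since $\Oc\otimes\Rc_L$ is a finite product of DVRs with principal maximal ideals (the setting of Remark \ref{skewsymmpropremark}), Proposition \ref{skewsymmprop} applied with $A = \Oc\otimes\Rc_L$ decomposes $\TS{A_L}{K}_{/\text{div}}$ into two isomorphic submodules, forcing $d_L$ to be even and completing the proof.
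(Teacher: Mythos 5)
Your proposal is correct and follows essentially the same route as the paper: run the argument of Proposition \ref{MR2.1congruence} with $A_L$, $\hat{p}$, and $\Oc\otimes\Rc_L$ in place of $X$, $p$, and $\Oc\otimes\Z_p$, identify $A_L(K)[\hat{p}]$ with $X(K)[p]$ via Proposition \ref{gen4.1}, and prove $d_L$ even by feeding the skew-Hermitian, $\Gal{K}{k}$-equivariant pairing of Theorem A.12 of \cite{MR} (twisted by $c^*$ as in Lemma \ref{gen1.3ii}) into Proposition \ref{skewsymmprop} with $A=\Oc\otimes\Rc_L$. The only point worth noting is that applying Theorem A.12 requires regarding $A_L$ as defined over $k$, which the paper flags explicitly.
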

\begin{proof}
	The proof is the same as Proposition \ref{MR2.1congruence}, using Proposition \ref{gen4.1} to identify $A_L(K)[\hat{p}]$ with $E(K)[p]$, and seeing that $d=\rk{R}\Sha(A_L/K)_{/\text{div}}[\hat{p}]$ is even as follows. Theorem 1 of \cite{Flach} shows $M=\Sha(A_L/K)_{/\text{div}}$ is an $\Oc\otimes\Rc_L$-module of finite cardinality. Since
		$$\Oc\otimes\Rc_L=\Oc\otimes(\Z_p\otimes R_L)=(\Oc\otimes\Z_p)\otimes R_L,$$
we have $M=\oplus_j(M\otimes\Oc'_{\hat{\pp}_j})$, where $\Oc'_{\hat{\pp}_j}=\Oc_{\pp_j}\otimes R_L$. As noted above, Theorem A.12 of \cite{MR} produces a perfect, skew-Hermitian, $\Gal{K}{k}$-equivariant pairing $[~,~]$. Defining $[x,y]'=[x,c^*(y)]$ as before gives a non-degenerate, skew-symmetric, $\Rc_L$-bilinear pairing with $[sx,y]'=[x,sy]'$ for all $s\in\Oc$. We can therefore apply Proposition \ref{skewsymmprop} (with $A=\Oc\otimes\Rc_L$) to see $d$ is even.
\end{proof}

\section{Main Results}
\label{primeresults}
We are now in a position to define and make use of the arithmetic local constants for our abelian variety $X$. Recall $R=\Oc/p\Oc$, where $\Oc\subset\End{K}{X}$. Also, recall that for each cyclic $L/K$, we have a twist $A_L$ of $X$ and rings $R_L$ (see \S\ref{SelmerTate}) and $\Rc_L=R_L\otimes\Z_p$.
\begin{definition}
\label{genMR4.5}
As in Definition 4.5 of \cite{MR}, for each cyclic $L/K$ contained in $F$, we define the arithmetic local constant $\delta_v:=\delta(v,X,L/K)$ by
	$$\delta_v:=\rk{R}(\Sel{\Xc}{K_v}{X[p]}/\Sel{\Xc\cap\Ac}{K_v}{X[p]})\mod{2}.$$
\end{definition}
\begin{theorem}
\label{genMR6.4}
	For $\Sk_L$ as in \S\ref{notation},
	$$\crk{\Oc\otimes\Z_p}\Selmer{p^\infty}{X}{K}-\crk{\Oc\otimes\Rc_L}\Selmer{p^\infty}{A_L}{K}
			\equiv\sum_{v\in\Sk_L} \delta_v\mod{2}.$$
\end{theorem}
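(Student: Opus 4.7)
The plan is to combine the three main ingredients already established: Propositions \ref{MR2.1congruence} and \ref{genMR6.3} which express the $p^\infty$-Selmer coranks in terms of $p$-Selmer ranks, Theorem \ref{gen1.4} which compares the two $p$-Selmer ranks via local contributions, and Lemma \ref{X=A} which restricts those contributions to the bad set $\Sk_L$. The proof is essentially a bookkeeping argument that assembles these pieces.

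First I would subtract the congruence of Proposition \ref{genMR6.3} from that of Proposition \ref{MR2.1congruence}. The $\rk{R} X(K)[p]$ terms should cancel: on the $A_L$ side the corresponding rank is formally $\rk{R} A_L(K)[\hat{p}]$, but the canonical $G_K$-isomorphism of Proposition \ref{gen4.1} is automatically $\Oc$-equivariant, since the $\Oc$-action on $A_L = \Ic_L \otimes X$ is through the second factor (cf.\ Remark \ref{O-twist-vs-Z-twist}); hence $\rk{R} A_L(K)[\hat p] = \rk{R} X(K)[p]$. The resulting congruence expresses the left-hand side of the theorem, modulo $2$, as $\rk{R}\Selmer{p}{X}{K} - \rk{R}\Selmer{\hat{p}}{A_L}{K}$.

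Next I would invoke Definition \ref{genSelmer-map} to identify these $p$-Selmer groups with $\Sel{\Xc}{K}{X[p]}$ and $\Sel{\Ac}{K}{X[p]}$, and apply Theorem \ref{gen1.4} to rewrite their $R$-rank difference, mod $2$, as the sum over all primes $v$ of $K$ of
\[
\rk{R}\bigl(\Sel{\Xc}{K_v}{X[p]}/\Sel{\Xc\cap\Ac}{K_v}{X[p]}\bigr).
\]
By Lemma \ref{X=A} the local conditions $\Xc$ and $\Ac$ coincide for every $v \notin \Sk_L$, so $\Sel{\Xc\cap\Ac}{K_v}{X[p]} = \Sel{\Xc}{K_v}{X[p]}$ and the corresponding summand vanishes; the sum therefore collapses to a sum over $v \in \Sk_L$. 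By Definition \ref{genMR4.5} each surviving summand is precisely $\delta_v \bmod 2$, which yields the claimed congruence.

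Since all of the genuinely nontrivial work (Tate local duality, skew-Hermitian Flach pairings, evenness of $d$ via Proposition \ref{skewsymmprop}) has already been carried out in the preceding sections, no new obstacle arises here. The only point requiring care is to confirm that the $\rk{R} X(K)[p]$ terms cancel on the nose, which, as noted above, is ensured by the $\Oc$-linearity built into the twist construction of Proposition \ref{gen4.1}.
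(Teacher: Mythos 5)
Your proposal is correct and follows essentially the same route as the paper: the paper's proof likewise combines Lemma \ref{X=A} and Theorem \ref{gen1.4} to get $\rk{R}\Selmer{p}{X}{K}-\rk{R}\Selmer{\hat{p}}{A_L}{K}\equiv\sum_{v\in\Sk_L}\delta_v\pmod{2}$ and then invokes Propositions \ref{MR2.1congruence} and \ref{genMR6.3} to pass to the $p^\infty$-Selmer coranks. Your extra remark on the $\Oc$-equivariance of the identification in Proposition \ref{gen4.1} (so that the $\rk{R}X(K)[p]$ terms cancel) is a valid point of care that the paper handles implicitly by stating Proposition \ref{genMR6.3} directly in terms of $X(K)[p]$.
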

\begin{proof}
	First, Lemma \ref{X=A} and Theorem \ref{gen1.4} give
		$$\rk{R}\Selmer{p}{X}{K}-\rk{R}\Selmer{\hat{p}}{A_L}{K}\equiv\sum_{v\in\Sk_L}\delta_v\mod{2}.$$
The claim then follows from this, Proposition \ref{MR2.1congruence} and Proposition \ref{genMR6.3}.
\end{proof}
Corollary 5.3 of \cite{MR} shows that in the elliptic curve case, $\delta_v$ can be computed via a completely local formulae, and the same arguments apply in our more general setting. For $v$ a prime of $K$ and $w$ a prime of $L$ above $v$, if $L_w\neq K_v$, let $L'_w$ be the unique subfield of $L_w$ containing $K_v$ with $[L_w:L'_w]=p$, and otherwise let $L'_w:=L_w=K_v$. Proposition 5.2 of \cite{MR} provides an $\Oc$-module isomorphism
	\begin{equation}
	\label{MR5.2}
		\Sel{\Xc\cap\Ac}{K_v}{X[p]}\cong(X(K_v)\cap\Norm{L_w}{L'_w}X(L_w))/pX(K_v).
	\end{equation}
\begin{prop}[Corollary 5.3 of \cite{MR}]
\label{genMR5.3}
	For every prime $v$ of $K$, \eqref{MR5.2} implies
		$$\delta_v\equiv\rk{R}X(K_v)/(X(K_v)\cap\Norm{L_w}{L'_w}X(L_w))\mod{2}.$$
\end{prop}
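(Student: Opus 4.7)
The plan is to use the isomorphism \eqref{MR5.2} to rewrite the numerator of the quotient defining $\delta_v$ and then apply the third isomorphism theorem, so the argument reduces to a quick check that $p X(K_v)$ lies inside $X(K_v)\cap \Norm{L_w}{L'_w} X(L_w)$.

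First I would recall that via the Kummer sequence associated to multiplication-by-$p$ on $X$, one obtains an $\Oc$-equivariant injection $X(K_v)/pX(K_v)\hookrightarrow \Coh{1}{K_v}{X[p]}$ whose image is, by Definition \ref{genSelmer-map}, precisely $\Sel{\Xc}{K_v}{X[p]}$. This identification is an isomorphism of $\Oc$-modules, hence of $R$-modules after reduction, so I may pass throughout to the concrete model $\Sel{\Xc}{K_v}{X[p]}\cong X(K_v)/pX(K_v)$. Combining this with \eqref{MR5.2}, the quotient appearing in Definition \ref{genMR4.5} becomes
\[
\Sel{\Xc}{K_v}{X[p]}/\Sel{\Xc\cap\Ac}{K_v}{X[p]}\ \cong\ \frac{X(K_v)/pX(K_v)}{\bigl(X(K_v)\cap\Norm{L_w}{L'_w}X(L_w)\bigr)/pX(K_v)}
\]
as $R$-modules.

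Next I would justify that the displayed quotient is well-defined, i.e.\ that $pX(K_v)\subseteq X(K_v)\cap \Norm{L_w}{L'_w}X(L_w)$. In the degenerate case $L'_w=L_w=K_v$ the norm map is the identity and the intersection equals $X(K_v)$, so the containment is trivial and both sides of the claimed congruence are zero. In the case $[L_w:L'_w]=p$, for any $P\in X(K_v)\subseteq X(L'_w)\subseteq X(L_w)$ the point $P$ is fixed by $\Gal{L_w}{L'_w}$, and therefore
\[
\Norm{L_w}{L'_w}(P)=\sum_{\sigma\in\Gal{L_w}{L'_w}}\sigma(P)=pP,
\]
which shows $pP\in \Norm{L_w}{L'_w}X(L_w)$ and hence the desired inclusion. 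The third isomorphism theorem then identifies the quotient above with $X(K_v)/\bigl(X(K_v)\cap\Norm{L_w}{L'_w}X(L_w)\bigr)$ as $R$-modules.

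Finally, applying $\rk{R}(-)$ to this $R$-module isomorphism and reducing modulo $2$ yields the claimed congruence via Definition \ref{genMR4.5}. There is really no obstacle here beyond keeping track of the $\Oc$-module (equivalently $R$-module) structure through the Kummer identification and through \eqref{MR5.2}, since the latter is given to us as an $\Oc$-module isomorphism; the remaining content is the elementary norm computation above.
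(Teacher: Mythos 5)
Your proposal is correct and is precisely the argument the paper intends: the paper itself gives no proof here, deferring to Corollary 5.3 of \cite{MR}, and that deduction is exactly your combination of the Kummer identification $\Sel{\Xc}{K_v}{X[p]}\cong X(K_v)/pX(K_v)$, the isomorphism \eqref{MR5.2}, the containment $pX(K_v)\subseteq \Norm{L_w}{L'_w}X(L_w)$ coming from $\Norm{L_w}{L'_w}(P)=[L_w:L'_w]P=pP$, and the third isomorphism theorem. (One could even avoid the compatibility of the two identifications by using additivity of $\rk{R}$ from Proposition \ref{rankexact}, but as stated your argument is fine since the isomorphism of \eqref{MR5.2} is the restriction of the Kummer map.)
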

\begin{cor}
\label{set-c-ramified}
	Let $\Sk^c_L$ be the set of primes $v$ of $K$ such that $v$ ramifies in $L/K$ and $v^c=v$. Then 
		$$\crk{\Oc\otimes\Z_p}\Selmer{p^\infty}{X}{K}-\crk{\Oc\otimes\Rc_L}\Selmer{p^\infty}{A_L}{K}
				\equiv\sum_{v\in\Sk^c_L} \delta_v\mod{2}.$$
\end{cor}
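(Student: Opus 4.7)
The plan is to derive the corollary directly from Theorem \ref{genMR6.4} by showing that every prime $v \in \Sk_L \setminus \Sk^c_L$ contributes $0 \pmod{2}$ to the sum. Any $v$ ramified in $L/K$ is automatically in $\Sk_L$ since $L \subseteq F$, so $\Sk^c_L \subseteq \Sk_L$. I would decompose the complement as the disjoint union $\Sk_L \setminus \Sk^c_L = \Sk_L^{\mathrm{sw}} \sqcup \Sk_L^{\mathrm{u}}$, where
\[
\Sk_L^{\mathrm{sw}} := \{v \in \Sk_L : v^c \neq v\}
\quad\text{and}\quad
\Sk_L^{\mathrm{u}} := \{v \in \Sk_L : v^c = v \text{ and } v \text{ is unramified in } L/K\}.
\]
The first set collects the ``swapped'' primes and the second collects the $c$-fixed primes lying in $\Sk_L$ for reasons other than ramification in $L/K$.

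For the swapped primes, I would pair each $v \in \Sk_L^{\mathrm{sw}}$ with $v^c$, which also lies in $\Sk_L$ by the $c$-stability noted in Remark \ref{rankB=rankBc}. A chosen lift of $c$ induces $\dagger$-semilinear group isomorphisms $X(K_v) \isomto X(K_{v^c})$ and $X(L_w) \isomto X(L_{w^c})$, which descend to a $\dagger$-semilinear isomorphism between the two quotients computing $\delta_v$ and $\delta_{v^c}$ via Proposition \ref{genMR5.3}. Lemma \ref{rankM=rankMc} then gives $\delta_v = \delta_{v^c}$, so $\delta_v + \delta_{v^c} \equiv 2\delta_v \equiv 0 \pmod{2}$ coordinate-wise, and the total contribution of $\Sk_L^{\mathrm{sw}}$ is even.

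For $v \in \Sk_L^{\mathrm{u}}$, Proposition \ref{genMR5.3} reduces the claim $\delta_v \equiv 0$ to showing
\[
X(K_v) = X(K_v) \cap \Norm{L_w}{L'_w} X(L_w).
\]
When $L_w = K_v$ the convention $L'_w = L_w = K_v$ makes $\Norm{L_w}{L'_w}$ the identity and the equality is immediate. Otherwise $L_w/K_v$ is a nontrivial unramified extension, and I would argue directly at the level of Selmer structures, showing that $\Sel{\Xc}{K_v}{X[p]} = \Sel{\Ac}{K_v}{X[p]}$ under the identification $A_L[\hat{p}] \cong X[p]$ of Proposition \ref{gen4.1}. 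Since inertia at $v$ acts trivially on $\Ic_L$ whenever $v$ is unramified in $L/K$, the twist producing $A_L$ from $X$ is locally ``unramified at $v$''; combined with the self-duality of both $\Xc$ and $\Ac$ (Proposition \ref{MR2.1selfdual}), an inflation–restriction argument from $K_v^{\mathrm{ur}}$ should force the two half-rank images in the $\F_p$-self-dual space $\Coh{1}{K_v}{X[p]}$ to coincide, exactly as in the unramified-Tate-module argument of Lemma \ref{X=A}.

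The main obstacle is this last step when $v \mid p$ or $X$ has bad reduction at $v$, since then the Tate module $T_p(X)$ is itself ramified and the clean argument of Lemma \ref{X=A} does not apply verbatim. The technical heart of the corollary is thus a local analysis showing that the $L/K$-twist is invisible to the local Selmer condition at every prime unramified in $L/K$, regardless of the local behavior of $X$ itself; once this is in hand, combining the three pieces gives the claimed congruence.
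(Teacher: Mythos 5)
Your decomposition of $\Sk_L$ and your treatment of the swapped primes are essentially the paper's: the first case is exactly Lemma 5.1 of \cite{MR} (a $\dagger$-semilinear isomorphism induced by $c$ between the local quotients computing $\delta_v$ and $\delta_{v^c}$, plus Lemma \ref{rankM=rankMc}), and that part of your sketch is fine.

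The gap is in the $c$-fixed unramified case, and it is precisely the step you flag as ``the technical heart.'' You are trying to prove that the local Selmer conditions of $\Xc$ and $\Ac$ agree at a prime $v$ with $v^c=v$ that is unramified in $L/K$ but may divide $p$ or be a prime of bad reduction, and you correctly observe that the argument of Lemma \ref{X=A} does not extend to that setting. But that case is vacuous: the dihedral hypothesis on $F/k$ forces any prime $v$ with $v^c=v$ that is unramified in $L/K$ to split completely in $L/K$ (Lemma 6.5 of \cite{MR} --- roughly, the Frobenius $\sigma\in\Gal{L}{K}$ at such a $v$ satisfies $c\sigma c^{-1}=\sigma^{-1}$ while the decomposition group at a $c$-stable prime contains a lift of $c$, so $\sigma=\sigma^{-1}$ in the odd-order group $\Gal{L}{K}$, whence $\sigma=1$). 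Thus $L_w=K_v$, the norm map $\Norm{L_w}{L'_w}$ is the identity (equivalently, is surjective), and $\delta_v\equiv 0$ follows immediately from Proposition \ref{genMR5.3} --- which is the case you already handled correctly. The ``nontrivial unramified extension $L_w/K_v$'' branch you then try to analyze never occurs, so no local analysis at primes above $p$ or of bad reduction is needed; without invoking the splitting statement, however, your proposal is genuinely incomplete at that point, and the local claim you would need in its place (that the twist is invisible to the local condition at every unramified prime, including $v\mid p$) is not something the paper proves or uses.
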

\begin{proof}
	The arguments are as in the proof of Theorem 7.1 of \cite{MR}. If $v\not\in\Sk^c_L$ then $v^c\neq v$ or $v$ is unramified in $L/K$. If $v^c\neq v$ then Lemma 5.1 of \cite{MR} shows that $\delta_v+\delta_{v^c}\equiv 0$. If $v^c=v$ and $v$ is unramified then, Lemma 6.5 of \cite{MR} shows that $v$ splits completely in $L/K$ and hence $\Norm{L_w}{L'_w}$ is surjective. Using Proposition \ref{genMR5.3}, we see that $\delta_v\equiv 0$.
\end{proof}
The following is a first example of a class of abelian varieties for which Proposition \ref{genMR6.4} can be used to produce a lower bound for the growth in $p$-Selmer ($\Oc\otimes\Z_p$)-rank.
\begin{cor}
\label{abvarfirstexample}
	Suppose that for every $v\in\Sk^c_F$, we have $v\mid p$ and $X$ has good ordinary, non-anomalous reduction at $v$. If $\crk{\Oc\otimes\Z_p}\Selmer{p^\infty}{X}{K}$ is odd then
		$$\crk{\Oc\otimes\Z_p}\Selmer{p^\infty}{X}{F}\geq ([F:K],\ldots,[F:K]).$$
\end{cor}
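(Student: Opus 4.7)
The plan is to apply Corollary \ref{set-c-ramified} to every cyclic intermediate extension $L/K$ contained in $F$, deduce that each coordinate of $\crk{\Oc\otimes\Rc_L}\Selmer{p^\infty}{A_L}{K}$ is odd (hence at least $1$), and assemble these bounds into the stated inequality for $F$ via the $K$-isogeny decomposition of the restriction of scalars of $X$ from $F$ to $K$.

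For fixed cyclic $L/K$ contained in $F$, every $v\in\Sk^c_L$ also lies in $\Sk^c_F$, so by hypothesis $v\mid p$ and $X$ has good ordinary non-anomalous reduction at $v$. The main step, and what I expect to be the principal obstacle, is to prove $\delta_v\equiv 0\mod{2}$. Via Proposition \ref{genMR5.3} this reduces to showing that $\rk{R}X(K_v)/(X(K_v)\cap\Norm{L_w}{L'_w}X(L_w))=0$. Under the stated hypotheses, a standard formal-group analysis applies to the formal group $\hat{X}$ and the reduction $\red{X}$ at $v$: for an ordinary formal group over a $p$-adic local field the norm is surjective on the formal group (the trace on the formal tangent space is a $p$-adic unit), while the non-anomalous condition $\red{X}(\kappa(v))[p]=0$ forces the $p$-part of the cokernel coming from the étale quotient to vanish. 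Consequently $X(K_v)\subset\Norm{L_w}{L'_w}X(L_w)$, the quotient is trivial, and $\delta_v=0$. This argument parallels Corollary 6.9 of \cite{MR} in the elliptic-curve setting.

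With $\sum_{v\in\Sk^c_L}\delta_v\equiv 0\mod{2}$, Corollary \ref{set-c-ramified} yields
$$\crk{\Oc\otimes\Rc_L}\Selmer{p^\infty}{A_L}{K}\equiv\crk{\Oc\otimes\Z_p}\Selmer{p^\infty}{X}{K}\mod{2},$$
and the right-hand side is odd in every coordinate by hypothesis. Hence each coordinate of $\crk{\Oc\otimes\Rc_L}\Selmer{p^\infty}{A_L}{K}$ is at least $1$, and since $\Oc\otimes\Rc_L\cong\oplus_j(\Oc_{\pp_j}\otimes R_L)$ is free of rank $\phi(p^{n(L)})$ over the corresponding piece of $\Oc\otimes\Z_p$, each coordinate of $\crk{\Oc\otimes\Z_p}\Selmer{p^\infty}{A_L}{K}$ is at least $\phi(p^{n(L)})$. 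Finally, since $F/K$ is abelian, $\Q[\Gal{F}{K}]\cong\bigoplus_L\rho_L$ (sum over cyclic intermediate $L$), giving a $K$-isogeny $\text{Res}_{F/K}X\sim\bigoplus_L A_L$. Passing to Selmer groups and using isogeny invariance of $(\Oc\otimes\Z_p)$-coranks,
$$\crk{\Oc\otimes\Z_p}\Selmer{p^\infty}{X}{F}=\sum_L\crk{\Oc\otimes\Z_p}\Selmer{p^\infty}{A_L}{K}\geq\sum_L\phi(p^{n(L)})\cdot(1,\ldots,1),$$
and the standard identity $\sum_L\phi([L:K])=[F:K]$, where $L$ ranges over cyclic intermediate extensions, completes the proof.
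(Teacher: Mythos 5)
Your proposal is correct and follows essentially the same route as the paper: reduce via Theorem \ref{genMR6.4} and Corollary \ref{set-c-ramified} to showing $\delta_v=0$ at the ordinary non-anomalous primes above $p$ (the paper delegates this to the norm-map arguments of Appendix B of \cite{MR} and the Lubin--Rosen diagram, which is the formal-group computation you sketch), then use the parity to get odd $\Oc\otimes\Rc_L$-corank for every twist $A_L$ and sum over the cyclic subextensions via the decomposition of the Selmer group of $X$ over $F$. Your final counting with $\sum_L\phi([L:K])=[F:K]$ is just a more explicit version of the paper's observation that the Pontrjagin dual $\Sc_p(X/F)\cong\oplus_L\Sc_p(A_L/K)$ contains a copy of $\bK_p[\Gal{F}{K}]$.
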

\begin{proof}
	Suppose $L/K$ is a cyclic extension contained in $F$. Theorem \ref{genMR6.4} and Corollary \ref{set-c-ramified} show that we need only see that $\delta_v=0$ for all $v\in\Sk^c_F$. Since $v\in\Sk^c_F$, we have $v$ is totally ramified in $L_w/K_v$ by Lemma 6.5 of \cite{MR}.\\
	\indent The assumptions that $v\mid p$ and that $X$ has good ordinary, non-anomalous reduction at $v$ allow us to apply the arguments of Appendix B of \cite{MR} to see $\delta_v=0$. The key ingredients therein are, firstly, the diagram on page 239 of \cite{LubRos}, which applies to abelian varieties of any dimension. Secondly, non-anomalous reduction guarantees the relevant norm maps are surjective.\\
	\indent Now, for each cyclic $L$ in $F$, we have
	$$\crk{\Oc\otimes\Z_p}\Selmer{p^\infty}{X}{K}\equiv\crk{\Oc\otimes\Rc_L}\Selmer{p^\infty}{A_L}{K}\mod{2},$$
and by our hypotheses, the left-hand side is odd. As in Theorem 7.1 of \cite{MR}, the Pontrjagin dual $\Sc_p(X/F)$ of $\Selmer{p^\infty}{X}{F}$ (see for example \cite[\S 3]{MR}) decomposes as
	$$\Sc_p(X/F)\cong\oplus_L\Sc_p(A_L/K),$$
with each $\Sc_p(A_L/K)$ a $\bK[\Gal{F}{K}]_L\otimes\Q_p$-module (see \S 3 and Remark \ref{O-twist-vs-Z-twist}), and we have just seen each has odd dimension. From $\bK[\Gal{F}{K}]\cong\oplus_L\bK[\Gal{F}{K}]_L$, we see that $\Sc_p(X/F)$ contains a submodule isomorphic to 
	$$\bK_p[\Gal{F}{K}]\cong\oplus_L (\bK[\Gal{F}{K}]_L\otimes\Q_p),$$
and the claim follows.
\end{proof}

\subsection{Composite Dihedral Extensions}
\label{compositeresults}
We now consider an abelian extension $F/K$ of odd degree $[F:K]=m$, and a cyclic extension $L/K$ inside $F$. To ease notation, we fix some ordering of the primes in $[L:K]=\prod_i p_i^{e_i}$, where $e_i>0$ for each $i$. For such $L/K$ in $F$ and each $i$, there exists a $p_i$-power subextension $M_i/K$ such that $L/M_i$ is of degree prime to $p_i$.\\
\indent By Proposition 5.10 of \cite{MRS}, if $M$ and $M'$ are cyclic extensions of $K$ inside $L$ with $[M:K]$ and $[M':K]$ coprime and $L=MM'$, then the twist $A_L$ of $X$ with respect to $L/K$ may also be realized as a twist of $A_M$, i.e. $A_L\cong(A_M)_{M'}$. Thus, if we want to compare $A_L$ and $X$, it suffices to compare $X$ with $A_M$, and also $A_M$ with $(A_M)_{M'}$. As in the paragraph preceeding Proposition \ref{genMR6.3}, we consider $A_M$ and $(A_M)_{M'}$ as defined over $k$.\\
\indent In order to inductively apply Theorem \ref{genMR6.4} (see Theorem \ref{compositedihedral} below), we assume the following conjecture.
%
\begin{conjecture}
\label{rankindepofl}
	Suppose $p$ is a prime, $Y/L$ is an abelian variety, $B\subset\End{L}{Y}$ is an integral domain, and $\qp$ and $\qp'$ are primes of $B$ above $p$. Then 
		\begin{enumerate}
			\item $\crk{B\otimes\Z_{p}}\Selmer{p^\infty}{Y}{L}$ is independent of $p$,
			\item $\crk{B_{\qp}}\Selmer{p^\infty}{Y}{L}\otimes B_{\qp}=\crk{B_{\qp'}}\Selmer{p^\infty}{Y}{L}\otimes B_{\qp'}$,
		\end{enumerate}
\end{conjecture}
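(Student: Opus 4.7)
The plan is to reduce both statements to the combination of the Mordell--Weil theorem and the (conjectural) finiteness of the Shafarevich--Tate group, so that the corank of $\Selmer{p^\infty}{Y}{L}$ is ultimately captured by the $B$-rank of $Y(L)$. The central tool is the Kummer descent sequence
\[
0 \to Y(L)\otimes\Q_p/\Z_p \to \Selmer{p^\infty}{Y}{L} \to \Sha(Y/L)[p^\infty] \to 0,
\]
which is $B\otimes\Z_p$-equivariant because every map is $\End{L}{Y}$-equivariant.

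First I would handle the Mordell--Weil term. Since $B$ is an integral domain, set $F_B:=\mathrm{Frac}(B)$; by Mordell--Weil, $Y(L)\otimes_\Z F_B$ is a finite-dimensional $F_B$-vector space of some dimension $r$ depending only on $Y/L$ and $B$, and in particular independent of $p$. Using $B\otimes_\Z\Z_p\cong\prod_{\qp\mid p}B_\qp$ together with the flatness of $B_\qp$ over $\Z_p$, one identifies
\[
(Y(L)\otimes_\Z\Q_p/\Z_p)\otimes_{B\otimes\Z_p}B_\qp \;\cong\; (Y(L)\otimes_B B_\qp)\otimes_{\Z_p}\Q_p/\Z_p,
\]
and the structure theorem for finitely generated modules over the DVR $B_\qp$ shows this has $B_\qp$-corank exactly $r$ for every prime $\qp\mid p$. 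This already establishes the analogues of (1) and (2) for the left-hand term of the descent sequence.

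Assuming next that $\Sha(Y/L)$ is finite (the Shafarevich--Tate conjecture), $\Sha(Y/L)[p^\infty]$ is a finite $B\otimes\Z_p$-module and thus has zero $B_\qp$-corank at every $\qp$. A corank version of Proposition \ref{rankexact} then transfers the previous computation to $\Selmer{p^\infty}{Y}{L}$, yielding $\crk{B_\qp}\bigl(\Selmer{p^\infty}{Y}{L}\otimes B_\qp\bigr)=r$ at every $\qp$ and every $p$, which is precisely (1) and (2).

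The main obstacle is controlling $\Sha(Y/L)[p^\infty]_{\mathrm{div}}$ unconditionally. Following the strategy of Proposition \ref{Sha-X-even}, one might exploit a $B$-equivariant Cassels--Tate--Flach pairing on $\Sha(Y/L)_{/\mathrm{div}}$ (built from Proposition \ref{Radjoint}) to relate the $B_\qp$-coranks of the divisible quotient at conjugate primes of $B$; however, in the generality stated there is no given involution on $B$, and even granting one, establishing genuine $p$-independence appears to require the $p$-part of the Birch--Swinnerton-Dyer conjecture or an equivalent deep arithmetic input. This passage from a uniform bound at fixed $p$ to true $p$-independence is the fundamental difficulty of the conjecture.
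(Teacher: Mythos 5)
Your argument is correct and is essentially the paper's own justification: the statement is a \emph{conjecture}, and the paper's Remark \ref{rankindepofl-remark} gives exactly your reduction, namely that finiteness of $\Sha(Y/L)$ plus the descent sequence \eqref{limitselmerexactsequence} identifies $\crk{B\otimes\Z_p}\Selmer{p^\infty}{Y}{L}$ with $(\ldots,\rk{}{}_B\,Y(L),\ldots)$, whose entries are identical (giving (2)) and independent of $p$ (giving (1)). You also correctly diagnose that the unconditional statement is out of reach, which is precisely why the paper states it as a conjecture rather than a theorem.
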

\begin{remark}
\label{rankindepofl-remark}
	Both parts of the conjecture follow from the Shafarevich-Tate Conjecture. Indeed, when $\#\Sha(Y/L)<\infty$, \eqref{limitselmerexactsequence} implies
		$$\crk{B\otimes\Z_{p}}\Selmer{p^\infty}{Y}{L}=\rk{B\otimes\Z_{p}}(Y(L)\otimes\Q_{p}/\Z_{p})=(\ldots,\rk{B} Y(L),\ldots).$$
Each entry in the tuple is identical, giving (2), and independent of $p$, giving (1).
\end{remark}
\indent For the remainder, we let $F/K$ be as at the beginning of \S\ref{compositeresults} with $F/k$ dihedral, $X/k$ and $\Oc\subset\End{K}{X}$ as in the previous sections (see \S\ref{notation}), and assume that each prime dividing $[F:K]$ is unramified in $\Oc$. For Theorem \ref{compositedihedral} below, we also fix a cyclic extension $L/K$ in $F$.\\
\indent For each $M/K$ in $L$, let $R_M$ denote the maximal order in $\Q[\Gal{F}{K}]_M$ (as in \S\ref{SelmerTate} for $M=L$) and $\Oc_M=\Oc\otimes R_M$.\footnotemark ~Recall $c$ is the non-trivial element of $\Gal{K}{k}$. Let (as in Corollary \ref{set-c-ramified})
\footnotetext{We note that $\Oc_L\otimes\Z_p\cong\Oc\otimes\Rc_L$, with the latter as in Theorem \ref{genMR6.4}. The new notation is more convenient for dealing with more than one prime.}
	$$\Sk^c_M:=\braces{\text{primes $v$ of $K$ : $v^c=v$ and $v$ ramifies in $M/K$}}.$$
Set $M_0=K$ and for each $i>0$ set $M_i\subset L$ to be a $p_i$-extension of $K$ such that $p_i\nmid[L:M_i]$.\\
\indent Using Conjecture \ref{rankindepofl} (2), for any $p$, the tuple defining $\crk{B\otimes\Z_p}\Selmer{p^\infty}{Y}{L}$ may be thought of as a single value, so we define $r_p(Y/L,B)\in\Z$ by
	$$r_p(Y/L,B):=\crk{B_\qp}\Selmer{p^\infty}{Y}{L}\otimes B_\qp,$$
where $\qp$ is some prime of $B$ above $p$. In turn, one may interpret the right-hand side of Theorem \ref{genMR6.4} as a single value, so we define $\delta(X,L/K)\in\Z/2\Z$ as
	$$\delta(X,L/K):=\text{the first component of}~\left(\sum_{v\in\Sk_L} \delta(v,X,L/K)\right)\mod{2}.$$
\begin{remark}
\label{delta-constant-parity}
	We emphasize that the \emph{sum} of the local constants $\delta(v,X,L/K)$, for fixed $X$ and $L/K$, has constant parity across components, by Conjecture \ref{rankindepofl} (2) and Theorem \ref{genMR6.4}. It would be interesting to determine under what conditions one can prove that the \emph{individual} $\delta(v,X,L/K)$ have constant parity across components.
\end{remark}
\begin{theorem}
\label{compositedihedral}
	Assume Conjecture \ref{rankindepofl}. For $K=M_0,M_1,\cdots,L$ as above and $p$ a prime dividing $[L:K]$,
		$$r_p(A_L /K,\Oc_L)-r_p(X/K,\Oc)\equiv\sum_{i\geq 1}\delta(A_{M_{i-1}},M_i/K)\mod{2}.$$
\end{theorem}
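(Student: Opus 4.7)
The plan is to reduce the theorem to an iterated application of Theorem \ref{genMR6.4}, one prime divisor of $[L:K]$ at a time. Writing $L = M_1 M_2 \cdots M_t$ as the compositum of its $p_i$-Sylow subextensions (these are the $M_i$ defined in the text), set $N_0 := K$ and $N_i := M_1 \cdots M_i$, producing a filtration $K = N_0 \subset N_1 \subset \cdots \subset N_t = L$ in which each step $N_i/N_{i-1}$ has degree a power of $p_i$, and $N_i = N_{i-1} M_i$ with $\gcd([N_{i-1}:K], [M_i:K]) = 1$. Invoking Conjecture \ref{rankindepofl}, $r_q(Y/K, B)$ is independent of the auxiliary prime $q$, so we may rewrite the left-hand side as a telescoping sum and evaluate the $i$-th difference at the prime $p_i$:
\[
r_p(A_L/K, \Oc_L) - r_p(X/K, \Oc) \;=\; \sum_{i=1}^{t} \bigl(r_{p_i}(A_{N_i}/K, \Oc_{N_i}) - r_{p_i}(A_{N_{i-1}}/K, \Oc_{N_{i-1}})\bigr).
\]

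By Proposition 5.10 of \cite{MRS}, the coprimality of $[N_{i-1}:K]$ and $[M_i:K]$ gives a canonical identification $A_{N_i} \cong (A_{N_{i-1}})_{M_i}$, and from $R_{N_i} \cong R_{N_{i-1}} \otimes R_{M_i}$ one obtains $\Oc_{N_i} \cong \Oc_{N_{i-1}} \otimes R_{M_i}$. Thus the $i$-th summand compares $A_{N_{i-1}}$ to its twist by a $p_i$-power cyclic extension $M_i/K$. I would then apply Theorem \ref{genMR6.4} with $p_i$ in place of $p$, with $A_{N_{i-1}}$ (viewed as an abelian variety over $k$, as in the discussion preceding Proposition \ref{genMR6.3}) in place of $X$, with $\Oc_{N_{i-1}}$ in place of $\Oc$, and with the cyclic $p_i$-power extension $M_i/K$ in place of $L/K$. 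The ambient dihedral setting is provided by the $p_i$-Sylow subextension of $F/K$, which remains dihedral over $k$. This identifies the $i$-th telescoping term with $\delta(A_{N_{i-1}}, M_i/K) \pmod{2}$, matching (under the paper's identification $M_{i-1} \leftrightarrow N_{i-1}$) the $i$-th term on the right-hand side.

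The main obstacle is verifying that the hypotheses of Theorem \ref{genMR6.4} transfer cleanly through each twisting step. One must check that $A_{N_{i-1}}$ carries a polarization of degree prime to $p_i$ (inherited from the polarization on $X$ together with the assumption that all primes dividing $[F:K]$ are unramified in $\Oc$, so that $R_{N_{i-1}}$ is an order unramified at $p_i$); that $\Oc_{N_{i-1}}$ is stable under the Rosati involution and that $p_i$ is unramified in it (which follows from the corresponding properties for $\Oc$ together with the cyclotomic nature of $R_{N_{i-1}}$, on which the Rosati action is compatible with the $\iota$-involution of Definition \ref{MRA.5}); and that the Selmer structures $\Xc$ and $\Ac$ transport correctly under $A_{N_i} \cong (A_{N_{i-1}})_{M_i}$. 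These checks are formal, but each relies on combining the results of \cite{MRS} with the framework of \S\ref{SelmerTate}--\S\ref{pSelcorank}. Once they are in place, summing the congruences over $i$ yields the desired statement.
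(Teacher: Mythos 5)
Your proposal is essentially the paper's proof: the paper runs an induction on the number of prime divisors of $[L:K]$, peeling off $M_1$ at each stage, which is exactly your telescoping sum over the filtration $N_0\subset N_1\subset\cdots\subset N_t=L$, with the same three ingredients (Proposition 5.10 of \cite{MRS} for $A_{N_i}\cong(A_{N_{i-1}})_{M_i}$, Theorem \ref{genMR6.4} at each step, and Conjecture \ref{rankindepofl}(1) to switch the auxiliary prime between steps). The one point you leave as a ``formal check'' --- that the intermediate twist carries a polarization of degree prime to $p_i$ --- is genuinely the crux, and your proposed justification (unramifiedness of the primes dividing $[F:K]$ in $\Oc$) is not how it is settled: the paper invokes Howe's construction to show that $A_M$ admits a polarization of degree $p^2$, which is prime to $[L:M]$, and that is what licenses the next application of Theorem \ref{genMR6.4}. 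With that substitution your argument matches the paper's.
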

\begin{proof}
	Without loss of generality we may assume $p=p_1$. We proceed by induction on the number $j$ of primes dividing $[L:K]$, and the case $j=1$ is that of Theorem \ref{genMR6.4}. Suppose now that $j>1$, and let $M=M_1$ and let $M'$ correspond to the compositum of the $M_i$ for $1<i\leq j$. Recall from the discussion above that Proposition 5.10 of \cite{MRS} shows $A_L\cong(A_M)_{M'}.$ Arguments of Howe \cite[\S 2]{Howe} show that $A_M$ has a polarization degree of $p^2$, in particular prime to $[L:M]$, and so we can apply Theorem \ref{genMR6.4} in $L/M$ with $A_M$ playing the role of $X$. For $p'$ any prime dividing $[L:M]$, by induction we have
		$$r_{p'}(A_L,\Oc_L)-r_{p'}(A_M,\Oc_M)\equiv \sum_{i\geq 2}\delta(A_{M_{i-1}},M_i/M)\mod{2}.$$
Using Conjecture \ref{rankindepofl} (1), we have
		$$r_p(Y/K,B)\equiv r_{p'}(Y/K,B)\mod{2},$$
for $Y=X$, $A_M$, $A_L$, and $B=\Oc$, $\Oc_M$, $\Oc_L$, respectively, and hence
	$$\begin{array}{rl}
			r_p(A_L/K,\Oc_L) - r_p(X/K,\Oc)\equiv & r_{p'}(A_L/K,\Oc_L) - r_{p'}(A_M/K,\Oc_M)\vspace{.05in}\\
																						& \hspace{.15in}+~r_{p}(A_M/K,\Oc_M) - r_{p}(X/K,\Oc)\vspace{.15in}\\
			\equiv & \sum_{i\geq 2}\delta(A_{M_{i-1}},M_i/M)\vspace{.05in}\\
			       & \hspace{.35in}+~\delta(X,M/K)\vspace{.15in}\\
			\equiv & \sum_{i\geq 1}\delta(A_{M_{i-1}},M_i/K)\mod{2}.
		\end{array}$$
We are able to restrict the primes $v$ in the preceeding sums to those in $\Sk^c_{M_i}$ just as in Corollary \ref{set-c-ramified}.
\end{proof}
As in Corollary \ref{abvarfirstexample}, the following is a first example of a setting in which Theorem \ref{compositedihedral} can be used to provide a lower bound for growth in the rank of $E$ (i.e. when $X=E$ is an elliptic curve).
\begin{cor}
\label{compositefirstexample}
	Let $E/k$ be an elliptic curve, $\bK\not\subset k$, and assume $\#\Sha(E/F)<\infty$. For each cyclic $L/K$ let $M_{L,i}\subset L$ be as in the paragraphs preceeding Theorem \ref{compositedihedral}. Suppose that for every prime $v$ of $K$,
		\begin{enumerate}
			\item if $v=v^c$ then $v$ is unramified in $M_{L,i}/K$ for every $L$ and each $i\geq 2$,
			\item if $v=v^c$ and $v$ ramifies in $M_{L,1}/K$ then $v\nmid p_1$ and $E$ has good reduction at $v$.
		\end{enumerate}
	Let $m$ be the number of primes $v$ satisfying (2). If $\rk{\Oc}E(K)+m$ is odd, then $\rk{\Oc}E(F)\geq [F:K].$
\end{cor}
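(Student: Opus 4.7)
The plan is to mimic Corollary \ref{abvarfirstexample}, substituting Theorem \ref{compositedihedral} for Theorem \ref{genMR6.4} and using the finiteness of $\Sha(E/F)$ in place of an explicit local good-ordinary hypothesis. Fix a cyclic $L/K$ in $F$ with $[L:K]=\prod_i p_i^{e_i}$ and its chain $K=M_{L,0}\subset M_{L,1}\subset\cdots\subset L$. Since $\#\Sha(E/F)<\infty$ forces $\#\Sha(E/K)<\infty$, Remark \ref{rankindepofl-remark} supplies both parts of Conjecture \ref{rankindepofl} and identifies $r_{p_1}(E/K,\Oc)$ with $\rk{\Oc}E(K)$. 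Applying Theorem \ref{compositedihedral} reduces the task to showing
\[
\sum_{i\geq 1}\delta(A_{M_{L,i-1}},M_{L,i}/K)\equiv m\pmod 2.
\]

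By Corollary \ref{set-c-ramified}, each $\delta(A_{M_{L,i-1}},M_{L,i}/K)$ is a sum of local constants over $\Sk^c_{M_{L,i}}$. Hypothesis (1) makes $\Sk^c_{M_{L,i}}$ empty for $i\geq 2$, eliminating those summands, and further forces every $v$ with $v^c=v$ ramifying in $L/K$ to ramify already in $M_{L,1}/K$. For the remaining $i=1$ term, hypothesis (2) puts us at primes with $v\nmid p_1$ and $E$ of good reduction. Here Proposition \ref{genMR5.3} asks for the $R$-rank of $E(K_v)/(E(K_v)\cap N_{M_{L,1,w}/M'_{L,1,w}}E(M_{L,1,w}))$, and the classical elliptic-curve norm calculation applies: the formal-group norm is surjective since $p_1$ is invertible on $\hat{E}(\mathfrak{m}_v)$, and on the (unchanged) residue field the totally ramified norm is multiplication by $p_1$. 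These two facts collapse the quotient to the $\Oc$-module $\tilde{E}(k_v)/p_1\tilde{E}(k_v)$, and the total $R$-rank contribution mod $2$ is $m$ by the definition given in the statement.

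Once $r_{p_1}(A_L/K,\Oc_L)$ is odd for every cyclic $L\subset F$, the decomposition $\Sc_p(E/F)\cong\bigoplus_L\Sc_p(A_L/K)$ used in Corollary \ref{abvarfirstexample} — each summand being a $\bK[\Gal{F}{K}]_L\otimes\Q_p$-module of odd $\bK_p$-dimension — forces $\Sc_p(E/F)\otimes\Q_p$ to contain a submodule isomorphic to $\bK_p[\Gal{F}{K}]\cong\bigoplus_L(\bK[\Gal{F}{K}]_L\otimes\Q_p)$. This gives $\crk{\Oc\otimes\Z_{p_1}}\Selmer{p_1^\infty}{E}{F}\geq[F:K]$, and invoking $\#\Sha(E/F)<\infty$ one more time converts this to $\rk{\Oc}E(F)\geq[F:K]$.

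The main obstacle is the bookkeeping around the constant $m$: one must check that the parity of $\sum_{v\in\Sk^c_{M_{L,1}}}\delta_v$ is genuinely $m$ for every cyclic $L\subset F$, not merely some $L$-dependent quantity. This is exactly what hypothesis (1) buys — by forcing all $c$-fixed ramification of $L/K$ to occur in $M_{L,1}/K$, it renders the count of contributing primes $L$-independent. The subsidiary local claim $\delta_v=1$ in the $R$-module sense is routine but does rely on the decomposition $R\cong\oplus_j R_j$ of Definition \ref{rankvector}, together with the observation that $\Oc$ acts on $\tilde{E}$ because $\Oc\subset\End{K}{E}$ and reduction at $v$ is $\Oc$-equivariant.
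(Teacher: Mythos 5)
Your overall architecture matches the paper's proof: reduce via Theorem \ref{compositedihedral} and Corollary \ref{set-c-ramified} to the parity of $\sum_{i\geq 1}\delta(A_{M_{L,i-1}},M_{L,i}/K)$, kill the $i\geq 2$ terms with hypothesis (1), and finish with the $\Sc_p(E/F)\cong\oplus_L\Sc_p(A_L/K)$ decomposition from Corollary \ref{abvarfirstexample}. But there is a genuine gap in the $i=1$ local computation. Your formal-group argument correctly identifies
$E(K_v)/(E(K_v)\cap\Norm{L_w}{L'_w}E(L_w))$ with $\tilde{E}(k_v)/p_1\tilde{E}(k_v)$ for a prime $v\nmid p_1$ of good reduction that is totally ramified in $M_{L,1}/K$, but it does \emph{not} follow "by the definition given in the statement" that each such prime contributes $1$ to the $R$-rank mod $2$. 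The module $\tilde{E}(k_v)/p_1\tilde{E}(k_v)\cong\tilde{E}(k_v)[p_1]$ can perfectly well have even rank --- it is trivial when $\tilde{E}(k_v)$ has no $p_1$-torsion, and can have $\F_{p_1}$-dimension $2$ when the full $p_1$-torsion is rational over $k_v$ --- so the claim $\delta_v\equiv 1$ is not routine. This is precisely where the paper invokes an external input: Theorem 2.8 of \cite{ChLi}, which uses condition (2) \emph{together with the hypothesis} $\bK\not\subset k$ to pin down $\delta(v,E,M_{L,1}/K)\equiv(1,1)$ as an $R$-rank. Your proposal never uses $\bK\not\subset k$ at all, which is the symptom of the missing step: without the CM structure constraining $E(K_v)[p_1]$ as an $\Oc/p_1\Oc$-module, the count $m$ need not equal the parity of the sum of local constants.

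A secondary, smaller point: you are right to flag that the contributing set of primes must be $L$-independent, and hypothesis (1) does handle this as you say; that part of your bookkeeping agrees with the paper. But the conclusion $r_{p_1}(A_L/K,\Oc_L)\equiv\rk{\Oc}E(K)+m\pmod 2$ stands or falls on the per-prime contribution being odd, so the appeal to \cite{ChLi} (or an equivalent argument exploiting $\Oc\subset\End{K}{E}$ and $\bK\not\subset k$) cannot be omitted.
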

\begin{proof} Fix a cyclic extension $L/K$ inside $F$, and set $M_i=M_{L,i}$. From $\#\Sha(E/F)<\infty$ we have (e.g.) $\rk{\Oc}E(K)=r_p(E/K,\Oc)$ and Conjecture \ref{rankindepofl}, so we are in the situation of Theorem \ref{compositedihedral}. As in Corollary \ref{set-c-ramified}, if $v$ is unramified or $v\neq v^c$ then $\delta(v,A_{m_{i-1}},M_i/K)\equiv 0$ or
	$$\delta(v,A_{m_{i-1}},M_i/K)+\delta(v^c,A_{m_{i-1}},M_i/K)\equiv 0,$$ 
respectively, for every $i\geq 1$. For $v=v^c$, condition (1) gives $\delta(v,A_{m_{i-1}},M_i/K)\equiv 0$, for every $i\geq 2$. Thus $\delta(E,M_i/K)\equiv 0$ for $i\geq 2$. By Theorem 2.8 of \cite{ChLi}, condition (2) along with $\bK\not\subset k$ gives $\delta(v,E,M_1/K)\equiv (1,1)$, and so $\delta(E,M_1/K)\equiv m$.\\
\indent Using Theorem \ref{compositedihedral}, we combine the calculations to see that
	$$r_p(A_L /K,\Oc_L)\equiv r_p(E/K,\Oc)+m\mod{2}.$$
By assumption, this is forces $r_p(A_L /K,\Oc_L)$ to be odd and hence at least 1. The claim then follows just as in Corollary \ref{abvarfirstexample}.
\end{proof}

\subsection*{Acknowledgements}
This material is based upon work supported by the National Science Foundation under grant DMS-0457481. The author would like to thank Karl Rubin for his many helpful conversations on this material, and thank Karl Rubin and Jan Nekov\'{a}\v{r} for comments on initial drafts of this paper.


\bibliographystyle{plain}
\bibliography{research}


\end{document}